\documentclass[11pt,a4paper]{amsart}
\usepackage[utf8]{inputenc}
\usepackage{amsthm,amssymb,esint,bm,mathtools}
\usepackage[margin=1.2in]{geometry}
\usepackage{hyperref,hypcap}
\hypersetup{colorlinks=true,allcolors=blue,bookmarksdepth=3}
\usepackage{enumitem}

\usepackage{microtype}

\newtheorem{thm}{Theorem}[section]
\newtheorem{cor}[thm]{Corollary}
\newtheorem{lem}[thm]{Lemma}

\newtheorem{conj}{Conjecture}

\theoremstyle{definition}
\newtheorem{df}[thm]{Definition}
\theoremstyle{remark}
\newtheorem{rem}[thm]{Remark}

\numberwithin{equation}{section}

\newcommand{\dd}{\mathop{}\!\mathrm{d}}
\DeclareMathOperator{\diam}{diam}
\DeclareMathOperator{\supp}{supp}
\newcommand{\dist}{\mathrm{dist}}

\setcounter{tocdepth}{1}

\author[P.~Germain]{Pierre Germain}
\address[Pierre Germain]{Department of Mathematics, Huxley building, South Kensington campus, Imperial College London, London SW7 2AZ, United Kingdom}
\email{pgermain@ic.ac.uk}

\author[I.~Moyano]{Iván Moyano}
\address[Ivan Moyano]{Laboratoire J.A. Dieudonné,
UMR CNRS 7351, Université Côte-d'Azur, Parc Valrose 06108 Nice Cedex 02, France}
\email{imoyano@unice.fr}

\author[H.~Zhu]{Hui Zhu}
\address[Hui Zhu]{Department of Mathematics, Huxley building, South Kensington campus, Imperial College London, London SW7 2AZ, United Kingdom}
\email{hui.zhu@ic.ac.uk}

\title{On the vanishing of eigenfunctions of the Laplacian on tori}

\begin{document}

\begin{abstract} 
    Consider an eigenfunction of the Laplacian on a torus. How small can its $L^2$-norm be on small balls?
    We provide partial answers to this question by exploiting the distribution of integer points on spheres, basic properties of polynomials, and Nazarov--Turán type estimates for exponential polynomials. Applications to quantum limits and control theory are given.
\end{abstract}

\maketitle

\tableofcontents 

\section{Introduction}

\subsection{Eigenfunctions of the Laplacian on tori}

On the flat torus $\mathbb{T}^d = \mathbb{R}^d / (2\pi \mathbb{Z})^d$ of dimension $d \ge 2$, let $\mathcal{E}_d(\lambda) = \{ u \in C^\infty(\mathbb{T}^d) \mid -\Delta u = \lambda^2 u \}$ be the eigenspace of the Laplacian associated with the eigenvalue $\lambda^2 \in \mathbb{N}$ (where $\lambda \ge 0$).
Elements of $\mathcal{E}_d(\lambda)$ can be written as Fourier series
\begin{equation*}
    u(x) = (2\pi)^{-\frac{d}{2}} \sum_{k \in \mathcal{S}_d(\lambda)} \widehat{u}_k e^{i k \cdot x},
\end{equation*}
where $\mathcal{S}_d(\lambda) = \lambda\mathbb{S}^{d-1} \cap \mathbb{Z}^d$.
Denoting $\# A$ the cardinal of a set $A$, we set
\begin{equation*}
    \mathcal{N}_d(\lambda) = \# \mathcal{S}_d(\lambda) = \dim \mathcal{E}_d(\lambda).
\end{equation*}
Recall that (see e.g., \cite{Grosswald, IwaniecKowalski} and references therein):
\begin{itemize}
    \item $\mathcal{N}_2(\lambda) \lesssim \exp\{\frac{C \ln \lambda}{\ln \ln \lambda}\}$ for some $C>0$.
          Moreover $\mathcal{N}_2(\lambda) = 0$ if and only if the prime factorization of $\lambda^2$ includes a factor $p^k$ where $p \equiv 3 \pmod{4}$ and $k \equiv 1 \pmod{2}$.
    \item $\mathcal{N}_3(\lambda) \lesssim \lambda \ln\lambda \ln\ln\lambda$.
          Moreover $\mathcal{N}_3(\lambda) \gtrsim_\epsilon \lambda^{ 1 - \epsilon}$ for all $\epsilon > 0$ if $\lambda^2 \ne 0,4,7 \pmod{8}$, and $\mathcal{N}_3(\lambda) = 0$ if and only if $\lambda^2 \ne 4^a(8b+7) $ for some $a,b \in \mathbb{N}$.
    \item $0 < \mathcal{N}_4(\lambda) \lesssim \lambda^2 \ln\ln\lambda $.
        Moreover 
        \begin{equation*}
            \limsup\limits_{\lambda \to \infty} \frac{\mathcal{N}_4(\lambda)}{\lambda^2 \ln\ln \lambda} \gtrsim 1,
            \quad
            \liminf\limits_{\lambda\to\infty} \mathcal{N}_4(\lambda) \lesssim 1.
        \end{equation*}

    \item $\mathcal{N}_d(\lambda) \sim_d \lambda^{d-2}$ for all $d \ge 5$.
\end{itemize}
Readers are advised to refer to \S\ref{sec::notations} for notations used in the above estimates and throughout the paper.
To simplify statements, we will always assume that $\mathcal{E}_d(\lambda) \ne \emptyset$.
When $\lambda^2 \in \mathbb{N}$, this assumption holds for all $d \ge 4$, but requires further arithmetic conditions for $d=2,3$ as we have seen.

\subsection{Observability of eigenfunctions}

Denoting
\begin{equation*}
    \fint_\Omega f(x) \dd x = \frac{1}{|\Omega|} \int_\Omega f(x) \dd x
\end{equation*}
for any measurable $\Omega \subset \mathbb{R}^d$ with $|\Omega| > 0$ and $f \in L^1(\Omega)$, our aim in the present article is to understand the optimal \emph{observability constant} of eigenfunctions from $B_r = \{x \in \mathbb{R}^d : |x| < r\}$, a ball of radius $r>0$:
\begin{equation}
    \label{eq::def-m}
    m_d(\lambda,r) = \inf_{0 \ne u \in \mathcal{E}_d(\lambda)} \frac{\displaystyle \fint_{B_r} |u(x)|^2 \dd x}{\displaystyle \fint_{\mathbb{T}^d} |u{(x)}|^2 \dd x}.
\end{equation}
Clearly the definition of $m_d(\lambda,r)$ is independent of the choice of the center of the ball.
The behavior of $m_d(\lambda,r)$ as $r\to 0$ quantitatively describes the local vanishing (in the $L^2$ sense) of eigenfunctions.

\subsection{Known results}

An obvious remark is that $m_d(\lambda,r) \le 1$ for all $\lambda,r,d$.
To see this, it suffices to test the right hand side of \eqref{eq::def-m} with the eigenfunction defined by $u(x) = e^{i k \cdot x}$, where $k \in \mathcal{S}_d(\lambda)$.
Beyond this trivial example, the state of the art is as follows:
\begin{itemize}
    \item In all dimensions:
    \begin{itemize}
        \item Connes \cite{Connes} proved that $m_d(\lambda,r) \sim_{d,r} 1$ 
        \item Fontes-Merz \cite{Fontes-Merz} extended a Turán-type estimate of Nazarov \cite{Nazarov} to higher dimensions which implies that
        \begin{equation*}
            m_d(\lambda,r) \gtrsim (C_d r^d)^{2 d \lambda} = (\sqrt[d]{C_d} r)^{2 d^2 \lambda}.
        \end{equation*}
        Similar estimates can be derived from the work by Egidi and Veselic \cite{EgidiVeselic}.
    \end{itemize}
    \item When $d=2$:
    \begin{itemize}
        \item The resolvent estimate by Tao \cite{Tao} implies that $m_2(\lambda,r) \gtrsim r^6$ if $r > \lambda^{-\frac{1}{16}}$.
    \end{itemize}
\end{itemize}

\textit{Different control sets} have been considered in the literature.
Replacing $B_r$ with a piece of curved hypersurface $\mathcal{H}$, Bourgain and Rudnick \cite{BourgainRudnick} show that 
\begin{equation*}
    \inf\limits_{0 \ne u \in \mathcal{E}_d(\lambda)} \frac{\displaystyle \fint_{\mathcal{H}} |u(x)|^2 \dd x}{\displaystyle \fint_{\mathbb{T}^d} |u{(x)}|^2 \dd x} \sim_{d,\mathcal{H}} 1
\end{equation*}
if $d=2,3$, while Egidi and Veselić \cite{EgidiVeselic} consider ``thick'' sets in the spirit of the Logvinenko--Sereda theorem.
In our recent work with Burq and Sorella \cite{BurqGermainSorellaZhu}, we studied observability from rough control sets satisfying only certain Fourier decay conditions.

\textit{Different types of Fourier localization} are also of interest. The work of Fontes-Merz cited above applies to more general situations, for instance functions whose Fourier coefficients are supported in the ball --- instead on the sphere --- of radius $\lambda$. 
Being able to distinguish these two different situations is at the heart of the present article.

\textit{For generic eigenfunctions,} it is expected that some form of quantum ergodicity should hold. It was indeed proved by Lester and Rudnick \cite{LesterRudnick}, as well as by Granville and Wigman \cite{GranvilleWigman} that for not-too-small $r$ and for generic eigenfunctions, there holds $m_d(\lambda,r) \sim_d 1$.

\textit{For different geometries}, say on a general compact Riemannian manifold $M$, the behavior of the similarly defined function $m_M$ (see \S\ref{sec::other-geometries}) can change drastically.
Indeed, this dependence on the global geometry of the manifold makes the study of $m_M$ very appealing.
We will explore in \S\ref{sec::other-geometries} the behavior of $m_M$ on various geometries.

\subsection{Notations}
\label{sec::notations}

For parameters $a_1, \dots, a_n$, we will denote by $C_{a_1,\ldots,a_n}$, $C'_{a_1,\ldots,a_n}$ etc.\ for \emph{positive} constants whose values depend only on these parameters.
Universal constants independent of any parameters will simply be denoted by $C$, $C'$ etc.

For two quantities $A$ and $B$, we write $A \lesssim_{a_1, \dots, a_n} B$ if $A \le C_{a_1,\ldots,a_n} B$ for some $C_{a_1,\ldots,a_n} > 0$.
The phrase ``if $A \ll_{a_1,\ldots,a_n} B$'' means ``if $A \le C_{a_1,\ldots,a_n} B$ for some $C_{a_1,\ldots,a_n} > 0$ which can be sufficiently small''.
The notation $A \sim_{ a_1, \dots, a_n} B$ means $A \lesssim_{  a_1, \dots, a_n} B$ and $B \lesssim_{  a_1, \dots, a_n} A$.

Quantities $A$ and $B$ depending on a parameter $\gamma$ are called asymptotically equal as $\gamma\to\gamma_0$ if $\lim_{\gamma\to\gamma_0} \frac{A(\gamma)}{B(\gamma)} = 1$.
This will be denoted by $A \asymp B$ (as $\gamma\to\gamma_0$).

The Fourier transform of a Schwartz function $f$ on $\mathbb{R}^d$ and the Fourier coefficients of a periodic $u \in C^\infty(\mathbb{T}^d)$ are defined respectively by
\begin{equation*}
    \widehat{f}(\xi) = (2\pi)^{-\frac{d}{2}} \int_{\mathbb{R}^d} e^{- i \xi \cdot x} f(x) \dd x,
    \qquad
    \widehat{u}_k = (2\pi)^{-\frac{d}{2}} \int_{\mathbb{T}^d} u(x) e^{- i k \cdot x} \dd x.
\end{equation*}

We will also use $\exp(x) = e^x$ to denote the exponential function.

\subsection*{Acknowledgements} 

Pierre Germain was supported by a Wolfson fellowship from the Royal Society and the Simons Collaboration Grant on Wave Turbulence. 
Hui Zhu was partly supported by the Simons Collaboration Grant on Wave Turbulence.
Iván Moyano was visiting Imperial College London between September 2023 and February 2024 supported by an ICL-CNRS fellowship. 

The authors are grateful to Nicolas Burq for his comments on an earlier version of the paper, in particular for pointing out an application to control theory.
The authors would like to thank Zeév Rudnick for his comment that simplifies the proof of Theorem~\ref{thm::QL-upperbound-3D}.
They would also like to thank Zhizhong Huang for references in analytical number theory.

Finally, the authors would like to thank the anonymous referees whose numerous valuable comments lead to the great improvement on the presentation of this work.

\section{Results and perspectives}

\subsection{Main results and organization of the article}
\label{sec::main-results}

In \S\ref{sectionpointwise}, we investigate the possible order of vanishing of an eigenfunction.
This is a simpler question than that of the behavior of local $L^2$-norms which define $m_d(\lambda,r)$, but it is closely related.
For $u \in C^\infty(\mathbb{T}^d)$, we denote by $\Gamma(u)$ its order of vanishing at the origin.
Precisely, $\Gamma(u)$ is the maximum of the set of all $N \in \mathbb{N}$ such that $\partial_x^\alpha u(0) = 0$ for all $\alpha \in \mathbb{N}^d$ with $|\alpha| \le N$.
Then we define the maximum order of vanishing
\begin{equation*}
    \Gamma_d(\lambda) = \sup_{0 \ne u \in \mathcal{E}_d(\lambda)} \Gamma(u).
\end{equation*}

The following theorem combines results proved in \S\ref{sectionpointwise} and gives upper and lower bounds for $\Gamma_d(\lambda)$. 
For simplicity, we will state the theorem only for the cases $d \ge 5$ where the estimate $\mathcal{N}_d(\lambda) \sim_d \lambda^{d-2}$ is ready to be employed.
The arguments of the proofs are rather elementary and revolve around the properties of polynomials and their zero sets.

\begin{thm}[Order of vanishing]
    For all $d \ge 5$, the following estimate holds
    \begin{equation*}
        \lambda^{\frac{d-2}{d-1}} \lesssim_d \Gamma_d(\lambda) \le 2(d-2) \lambda + \exp\Bigl(\frac{C\ln \lambda}{\ln \ln \lambda}\Bigr).
    \end{equation*}
\end{thm}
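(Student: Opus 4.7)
The plan is to prove the two bounds independently.

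\textbf{Lower bound $\Gamma_d(\lambda) \gtrsim_d \lambda^{(d-2)/(d-1)}$.} For $u \in \mathcal{E}_d(\lambda)$, write the Taylor expansion $u = \sum_j P_j$ at the origin with $P_j$ homogeneous of degree $j$. A direct computation from $\Delta u + \lambda^2 u = 0$ yields $\Delta P_{j+2} + \lambda^2 P_j = 0$ for all $j$. By the classical splitting $\mathcal{P}_j = \mathcal{H}_j \oplus |x|^2 \mathcal{P}_{j-2}$ on homogeneous polynomials, $P_{j+2}$ is determined by $P_j$ modulo an element $H_{j+2} \in \mathcal{H}_{j+2}$. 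Hence the truncated Taylor map $T_N \colon \mathcal{E}_d(\lambda) \to \bigoplus_{j=0}^N \mathcal{H}_j$, $u \mapsto (H_0,\ldots,H_N)$, has image of dimension at most $\sum_{j=0}^N \dim \mathcal{H}_j \sim_d N^{d-1}$. Since $\dim \mathcal{E}_d(\lambda) = \mathcal{N}_d(\lambda) \sim_d \lambda^{d-2}$ for $d \ge 5$, choosing $N = c_d \lambda^{(d-2)/(d-1)}$ with $c_d$ sufficiently small makes the image strictly smaller than the domain, giving a nonzero $u \in \mathcal{E}_d(\lambda)$ with $\Gamma(u) \ge N$.

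\textbf{Upper bound $\Gamma_d(\lambda) \le 2(d-2)\lambda + e^{C\ln\lambda/\ln\ln\lambda}$.} The strategy is induction on $d$ by Fourier expansion in the last variable:
$$
u(x) = \sum_{|m| \le \lambda} e^{imx_d}\, u_m(x'),
$$
where a direct computation shows $u_m \in \mathcal{E}_{d-1}\!\big(\sqrt{\lambda^2 - m^2}\big)$. Setting $L = \Gamma(u)$ and assuming $L > 2\lambda$, the vanishing conditions $\partial_{x'}^{\alpha'} \partial_{x_d}^j u(0) = 0$ with $|\alpha'| + j \le L$ rewrite, for each fixed $\alpha'$ with $|\alpha'| \le L - 2\lambda$, as a Vandermonde system $\sum_m (im)^j \partial^{\alpha'} u_m(0) = 0$ for $j = 0,1,\ldots,2\lambda$ in at most $2\lambda + 1$ distinct integers $m \in [-\lambda,\lambda]$. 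This system is invertible and forces $\partial^{\alpha'} u_m(0) = 0$ for every $m$ and every $|\alpha'| \le L - 2\lambda$, hence $\Gamma(u_m) \ge L - 2\lambda$ whenever $u_m \ne 0$. As $u \ne 0$ forces some $u_m \ne 0$, iterating $d - 2$ times down to dimension $2$ yields $\Gamma_d(\lambda) \le 2(d-2)\lambda + \Gamma_2(\lambda')$ for some $\lambda' \le \lambda$ with nonempty $\mathcal{E}_2(\lambda')$.

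\textbf{Base case and main obstacle.} To bound $\Gamma_2(\lambda')$, pick $v \in \mathbb{R}^2$ such that $\{k \cdot v : k \in \supp \widehat{u}\}$ are pairwise distinct; this excludes only finitely many lines. The restriction $u(tv) = \sum_k \widehat{u}_k e^{i(k\cdot v)t}$ is then a nonzero exponential polynomial with $|\supp \widehat{u}|$ distinct-frequency terms, so by Vandermonde it vanishes to order at most $\mathcal{N}_2(\lambda') - 1$ at $t = 0$; since $\Gamma(u)$ lower-bounds this order, $\Gamma_2(\lambda') \le \mathcal{N}_2(\lambda') - 1 \le e^{C \ln \lambda / \ln \ln \lambda}$, using monotonicity of the right-hand side in $\lambda$. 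The step deserving the most care is the image-dimension computation for $T_N$ in the lower bound, namely verifying that the constraints $\Delta P_{j+2} + \lambda^2 P_j = 0$ indeed cut out a subspace of dimension $\sum_{j=0}^N \dim \mathcal{H}_j$ via the harmonic splitting of homogeneous polynomials; everything else reduces to routine Vandermonde linear algebra.
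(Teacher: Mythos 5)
Your proof is correct, and both halves are in substance dual reformulations of the paper's own arguments, so a brief comparison is in order. For the lower bound, the paper counts constraints on the Fourier side: $\Gamma(u)\ge N$ is equivalent to $\sum_k\widehat u_k k^\alpha=0$ for $|\alpha|\le N$, and the relation $|k|^2=\lambda^2$ on $\mathcal S_d(\lambda)$ reduces these to the multi-indices with $\alpha_d\in\{0,1\}$, giving $\binom{N+d-1}{d-1}+\binom{N+d-2}{d-1}$ conditions. Your jet-space count via $\mathcal P_j=\mathcal H_j\oplus|x|^2\mathcal P_{j-2}$ and the recursion $\Delta P_{j+2}=-\lambda^2P_j$ yields exactly the same number, since $\sum_{j=0}^N\dim\mathcal H_j$ telescopes to $\dim\mathcal P_N+\dim\mathcal P_{N-1}$; your version explains structurally where the count comes from, while the paper's Fourier-side version restricts verbatim to arbitrary subsets $\Lambda\subset\mathcal S_d(\lambda)$ (Theorem~\ref{thm::vanishing-general}), which is needed later for the $L^2$ upper bounds. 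For the upper bound, the paper constructs, by induction on $d$, interpolating polynomials $P_\ell$ with $P_\ell(k)=\bm{1}_{k=\ell}$ and degree $\le 2(d-2)\lambda+\mathcal N_2(\lambda')-1$; the factor $\prod_{n\in[-\lambda,\lambda]\cap\mathbb Z\setminus\{\ell^d\}}\frac{X^d-n}{\ell^d-n}$ is precisely the Lagrange basis inverting the Vandermonde system you write down in the variable $m=k_d$, and your base case (restriction to a generic line, at most $\mathcal N_2(\lambda')-1$ as there are that many distinct frequencies) matches the paper's explicit two-dimensional interpolant $\Re\prod_{k\ne\ell}\frac{(X^1+iX^2)-(k^1+ik^2)}{(\ell^1+i\ell^2)-(k^1+ik^2)}$. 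The one step you should write out in the lower bound is the implication $T_N(u)=0\Rightarrow\Gamma(u)\ge N$: it follows by induction on $j$, since $P_j=H_j+|x|^2Q_{j-2}$ with $\Delta\bigl(|x|^2Q_{j-2}\bigr)=-\lambda^2P_{j-2}$ and $\Delta$ is injective on $|x|^2\mathcal P_{j-2}$, so $P_{j-2}=0$ together with $H_j=0$ forces $P_j=0$. The remaining steps (invertibility of the Vandermonde systems, monotonicity of $x\mapsto e^{C\ln x/\ln\ln x}$ for large $x$) are sound.
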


Note that the right hand side provides an explicit constant for the bound $\Gamma_d(\lambda) \lesssim_d \lambda$ which follows directly from the Donnelly--Fefferman \cite{DonnellyFefferman} (see also Logunov--Malinnikova \cite{LogunovMalinnikova}).
However, we would expect $\Gamma_d(\lambda) \sim_d \lambda^{\frac{d-2}{d-1}}$ to hold (see Conjecture~\ref{conj::vanishing-order}).

In \S\ref{sec::upperbound-vanishing}, we construct examples of eigenfunctions giving small integrals over $B_r$ and thus give upper bounds for $m_d(\lambda,r)$.
Once again, our analysis combines elementary properties of polynomials with the distribution properties of integer points on spheres.
The most general estimate for upper bounds of $m_d(\lambda,r)$ is given in Theorem~\ref{thm::upperbound-via-vanishing}.
Again, for simplicity, we will only state here the estimates when $d \ge 5$ and $r$ is sufficiently small.

\begin{thm}[Upper bounds in higher dimensions] 
\label{thm::upperbound-high-dim}
    If $d \ge 5$ and $r \ll_d \lambda^{-\frac{1}{d-1}}$, then
    \begin{equation}
    \label{eq::upperbounds-highdim-via-vanishing}
        m_d(\lambda,r) \lesssim_d \bigl( C'_d r \lambda^{\frac{1}{d-1}} \bigr)^{C_d \lambda^{\frac{d-2}{d-1}}}.
    \end{equation}
\end{thm}

In \S\ref{sec::example-degenerate}, we construct sequences of eigenfunctions of increasing eigenvalues that have bounded and small integrals over $B_r$.
These constructions provide upper bounds for $\liminf\limits_{\lambda\to\infty} m_d(\lambda,r)$.
The results we have obtained are summarized in the following theorem.

\begin{thm}[Upper bounds for limit inferiors]
    \label{thm::upper-bounds-liminf}
    For all $d \ge 2$ and $r \in (0,1)$, the following upper bounds for $\liminf\limits_{\lambda\to\infty} m_d(\lambda,r)$ hold:
    \begin{itemize}
        \item When $d=2$, we have 
        \begin{equation*}
            \liminf_{\lambda\to\infty} m_2(\lambda,r) \lesssim r^2.
        \end{equation*}
        \item When $d=3$, we have
        \begin{equation*}
            \liminf_{\lambda\to\infty} m_3(\lambda,r) \lesssim \exp\Bigl\{-\exp\Bigl(\frac{C\ln(1/r)}{\ln\ln(1/r)}\Bigr)\Bigr\}.
        \end{equation*}
        \item For all $d \ge 4$, we have
        \begin{equation*}
            \liminf_{\lambda\to\infty} m_d(\lambda,r) \lesssim \exp\bigl\{-C_dr^{3-d} \bigr\}.
        \end{equation*}
    \end{itemize}
\end{thm}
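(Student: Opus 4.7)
The plan is to produce, for each dimension $d$ and each small $r$, an explicit sequence of eigenfunctions $u_n \in \mathcal{E}_d(\lambda_n)$ with $\lambda_n \to \infty$ for which the defining ratio of $m_d$ attains the stated bound. The unifying device for $d \ge 3$ is a \emph{lift}: given a lower-dimensional eigenfunction $v \in \mathcal{E}_{d-1}(\nu)$ that vanishes at the origin of $\mathbb{T}^{d-1}$ to some high order $\Gamma$, and any $m \in \mathbb{Z}$, the function $u_m(x) = v(x_1, \ldots, x_{d-1}) e^{i m x_d}$ lies in $\mathcal{E}_d(\lambda_m)$ with $\lambda_m^2 = \nu^2 + m^2$. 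Since $|u_m(x)|^2 = |v(x')|^2$ depends only on $x' = (x_1, \ldots, x_{d-1})$, Fubini gives
\begin{equation*}
\frac{\fint_{B_r}|u_m|^2}{\fint_{\mathbb{T}^d}|u_m|^2} \le \frac{\sup_{|x'|\le r}|v(x')|^2}{\fint_{\mathbb{T}^{d-1}}|v|^2},
\end{equation*}
which is independent of $m$, so sending $m \to \infty$ yields the desired sequence.

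For $d=2$, no lift is needed: the choice $\lambda_n^2 = 1 + n^2$ ensures $k_\pm := (\pm 1, n) \in \mathcal{S}_2(\lambda_n)$, and $u_n(x) = e^{i k_+ \cdot x} - e^{i k_- \cdot x} = 2i e^{i n x_2}\sin(x_1)$ satisfies $|u_n(x)|^2 \le 4 r^2$ on $B_r$ while $\fint_{\mathbb{T}^2}|u_n|^2$ is a positive constant independent of $n$, producing the $r^2$ bound.

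For $d \ge 3$, the function $v$ is built by a dimension count: vanishing at the origin to order $\Gamma$ imposes $\binom{\Gamma + d-1}{d-1}$ linear conditions on the $\mathcal{N}_{d-1}(\nu)$-dimensional space $\mathcal{E}_{d-1}(\nu)$, so one can realize $\Gamma \gtrsim_d \mathcal{N}_{d-1}(\nu)^{1/(d-1)}$. Combining Taylor's theorem with the Bernstein-type bound $\max_{|\alpha|=k}\|\partial^\alpha v\|_\infty \lesssim_d \nu^k\sqrt{\mathcal{N}_{d-1}(\nu)}\|v\|_{L^2}$ (Cauchy--Schwarz on the Fourier side) and Stirling's formula yields
\begin{equation*}
\sup_{|x'|\le r}|v(x')|^2 \lesssim_d \Bigl(\frac{Ce\nu r}{\Gamma}\Bigr)^{2\Gamma} \mathcal{N}_{d-1}(\nu)\, \|v\|_{L^2(\mathbb{T}^{d-1})}^2.
\end{equation*}
Choosing $\nu = \nu(r)$ so that the base $Ce\nu r/\Gamma$ drops below $e^{-1}$ collapses the ratio to $\lesssim \mathcal{N}_{d-1}(\nu) e^{-2\Gamma}$, in which the polynomial-in-$\nu$ prefactor is absorbed by the exponential.

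Balancing $\nu$ against $r$ produces the stated exponents. For $d=3$, along a subsequence where $\mathcal{N}_2(\nu) \gtrsim \exp(c\ln\nu/\ln\ln\nu)$ (such $\nu$ arise from products of many primes $\equiv 1 \pmod 4$), $\Gamma \gtrsim \exp(\tfrac{c}{2}\ln\nu/\ln\ln\nu)$ grows slower than any power of $\nu$, so the constraint $\nu r \lesssim \Gamma$ pins $\nu \sim 1/r$ up to slowly-varying factors, giving $\Gamma \sim \exp(c'\ln(1/r)/\ln\ln(1/r))$ and the claimed double-exponential bound. For $d \ge 4$, along an admissible subsequence (requiring $\nu^2 \ne 4^a(8b+7)$ when $d=4$, and $\nu$ on a $\limsup$ sequence for $\mathcal{N}_4$ when $d=5$), one has $\mathcal{N}_{d-1}(\nu) \gtrsim_d \nu^{d-3}$, hence $\Gamma \sim \nu^{(d-3)/(d-1)}$; balancing $\nu r \sim \Gamma$ gives $\nu \sim r^{-(d-1)/2}$ and $\Gamma \sim r^{-(d-3)/2}$, producing the exponent $(d-3)/2$. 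The main obstacle I anticipate is this final arithmetic step, particularly for $d=5$, where securing an admissible $\nu$ of magnitude $\sim r^{-2}$ whose four-square representation count is near-maximal relies on a non-trivial $\limsup$ estimate for $\mathcal{N}_4$.
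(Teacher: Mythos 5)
Your construction is the same as the paper's: lift a $(d-1)$-dimensional eigenfunction with high-order vanishing at the origin (obtained by a rank--nullity count) to $\mathbb{T}^d$ via $v(x')e^{imx_d}$, and control $\sup_{B_r}|v|$ by Taylor's theorem, $\|\widehat v\|_{\ell^1}\le(\sharp\Lambda)^{1/2}\|v\|_{L^2}$, and Stirling. The $d=2$ and $d=3$ cases go through essentially as in the paper (the $d=3$ case uses the same Wigert-style choice $\nu^2=\prod_{p\le n,\ p\equiv 1\,(4)}p$, and your observation that the multiplicative gaps between consecutive such $\nu$ are slowly varying is enough to pin $\nu\sim 1/r$).

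The genuine gap is in the arithmetic input for $d=4$ and $d=5$: you need a sphere in $\mathbb{Z}^{d-1}$ of radius $\nu$ \emph{prescribed up to constants} ($\nu\sim r^{-(d-1)/2}$) carrying $\gtrsim\nu^{d-3}$ lattice points, and the facts you invoke do not deliver this. For $d=4$, the condition $\nu^2\ne 4^a(8b+7)$ only guarantees $\mathcal{N}_3(\nu)>0$; even under the correct condition $\nu^2\not\equiv 0,4,7\pmod 8$ one only gets $\mathcal{N}_3(\nu)\gtrsim_\epsilon\nu^{1-\epsilon}$ (this is a Siegel-type bound, and $\gtrsim\nu$ on individual spheres is not available), which after balancing yields the exponent $r^{-1/2+\delta(\epsilon)}$ rather than $r^{-1/2}$. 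For $d=5$, as you note, a $\limsup$ statement for $\mathcal{N}_4$ does not locate an admissible $\nu$ near $r^{-2}$. The paper's fix is a one-line pigeonhole that you should adopt: the annulus $\{\varrho<|k|<2\varrho\}\subset\mathbb{Z}^{d-1}$ contains $\sim\varrho^{d-1}$ lattice points spread over at most $O(\varrho^2)$ values of $|k|^2$, so \emph{some} radius $R\in(\varrho,2\varrho)$ satisfies $\mathcal{N}_{d-1}(R)\gtrsim\varrho^{d-3}$; this needs no arithmetic hypotheses and works uniformly for all $d\ge 4$ and all $\varrho\gg 1$. With that replacement your argument closes. (Two minor remarks: the number of conditions for order-$\Gamma$ vanishing on $\mathbb{T}^{d-1}$ is $\binom{\Gamma+d-1}{d-1}$ as you say, giving $\Gamma\gtrsim\mathcal{N}_{d-1}(\nu)^{1/(d-1)}$, which happens to reproduce exactly the paper's exponents; and for $d=5$ one could alternatively take $\nu^2$ odd and use Jacobi's formula $r_4(n)=8\sigma(n)\ge 8n$, but the pigeonhole is cleaner and uniform.)
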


In \S\ref{sec::lowerbounds} we provide lower bounds for $m_d(\lambda,r)$. 
Tools used in the proof include distribution properties of integer points on circles and spheres (following Jarník \cite{Jarnik1926}, Connes \cite{Connes}, and Bourgain--Rudnick \cite{BourgainRudnick}) as well as some Nazarov--Turán type inequalities (see \S\ref{sec::nazarov}).

\begin{thm}[Lower bounds]
    \label{thm::lower-bounds}
    For all $d \ge 2$, the following lower bounds hold:
    \begin{itemize}
        \item When $d=2$, if $r > \lambda^{-\frac{1}{3}+ \epsilon}$, then
        \begin{equation*}
            m_2(\lambda,r) \gtrsim_\epsilon  r^2.
        \end{equation*}
        \item When $d=3$, if $r>\lambda^{-\frac{1}{12} + \epsilon}$, then
        \begin{equation*}
            m_3(\lambda,r) \gtrsim \exp\Bigl\{ -\exp\Bigl(\frac{C\ln (1/r)}{\ln\ln(1/r)}\Bigr) \Bigr\}.
        \end{equation*}
        \item For all $d \ge 4$, if $r > \lambda^{-\delta(d)}$, then for some positive constants $\delta(d)$ and $h(d)$ that depend solely on the dimension $d$, there holds
        \begin{equation*}
            m_d(\lambda,r) \gtrsim_\epsilon \exp\bigl\{-C_d r^{-h(d)-\epsilon}\bigr\}.
        \end{equation*}
    \end{itemize}
\end{thm}

Comparing with the upper bounds (for the limit inferiors) given by Theorem~\ref{thm::upper-bounds-liminf}, these lower bounds are nearly optimal when $d = 2,3$.
The statement for $d \ge 4$ is a simplified version of Theorem~\ref{thm::lower-bound-QL}, from which an explicit construction and estimates for the exponents $\delta(d)$ and $h(d)$ can be easily obtained.
Unfortunately, the exponents $h(d)$ are still far from being optimal in view of Theorem~\ref{thm::upper-bounds-liminf}.
Indeed, when $d=4$, our constructions gives $h(4) = 6$.
Moreover $h(d)$ grows factorially fast as $d\to\infty$.

\subsection{Application to quantum limits} 

The measure $\mu$ is called a quantum limit if it is the weak limit of the measures $\mu_j = |u_j|^2 \dd x$, where $u_j$ is a normalized eigenfunction associated to the eigenvalue $\lambda_j^2 \to \infty$.
Denote $\mathcal{Q}$ the set of all quantum limits.
It follows from Connes \cite{Connes} that the Lebesgue measure is absolutely continuous with respect to all quantum limits; and it was proved by Bourgain, cited in \cite{Jakobson}, that all quantum limits are absolutely continuous with respect to Lebesgue measure.
See \cite{Jakobson, Aissiou} for further results on quantum limits.
We will denote
\begin{equation*}
    m_d(r) = \inf_{\mu \in \mathcal{Q}} \frac{\mu(B_r)}{|B_r|},
\end{equation*}
so that $m_d(r)$ quantifies the infimum of the measure that a quantum limit can have on a ball of radius $r$.
In \S\ref{sec::QL-relation}, we prove the the identity
\begin{equation}
    \label{eq::QL-relation}
    m_d(r) = \liminf_{\lambda\to\infty} m_{d}(\lambda,r).
\end{equation}

An immediate consequence of Theorems \ref{thm::upper-bounds-liminf} and \ref{thm::lower-bounds} is the following corollary.

\begin{cor}
    \label{cor::quantum-limit}
    For all $d\ge 2$ and $r \in (0,1)$, the following bounds for $m_d(r)$ hold:
    \begin{itemize}
        \item When $d=2$, we have
        \begin{equation*}
            m_2(r) \sim  r^2.
        \end{equation*}
        \item When $d=3$, we have
        \begin{equation*}
            \exp\Bigl\{ -\exp\Bigl(\frac{C\ln (1/r)}{\ln\ln(1/r)}\Bigr) \Bigr\} \lesssim m_3(r) \lesssim \exp\Bigl\{ -\exp\Bigl( \frac{C'\ln (1/r)}{\ln\ln(1/r)} \Bigr) \Bigr\}.
        \end{equation*}
        \item When $d\ge 4$, we have
        \begin{equation*}
            \exp\bigl\{-C_d r^{-h(d)-\epsilon}\bigr\} \lesssim_\epsilon m_d(r) \lesssim \exp\bigl\{-C'_d r^{3-d} \bigr\}.
        \end{equation*}
    \end{itemize}
\end{cor}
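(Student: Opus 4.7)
The plan is to deduce the corollary from Theorems~\ref{thm::upper-bounds-liminf} and \ref{thm::lower-bounds} combined with the two comparisons between $m_d(r)$ and $\liminf_{\lambda\to\infty} m_d(\lambda,r)$ already set up in the text: the pointwise inequality $m_d(r) \ge \liminf_{\lambda\to\infty} m_d(\lambda,r)$ (whose proof via absolute continuity of quantum limits is deferred to \S\ref{sec::QL-relation}) and the sandwich \eqref{eq::QL-relation}.

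For the lower bounds, fix $r\in(0,1)$. The constraints $r > \lambda^{-1/3+\epsilon}$, $r > \lambda^{-1/12+\epsilon}$, or $r > \lambda^{-\delta(d)}$ appearing in Theorem~\ref{thm::lower-bounds} are eventually satisfied as $\lambda \to \infty$, so the lower bounds supplied by that theorem hold for all sufficiently large $\lambda$ and hence pass to the limit inferior. Combined with $m_d(r) \ge \liminf_{\lambda\to\infty} m_d(\lambda,r)$, this yields the three lower bounds in the statement.

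For the upper bounds, I would apply the right-hand inequality of \eqref{eq::QL-relation} in the form $m_d(s) \lesssim_{d,s} \liminf_{\lambda\to\infty} m_d(\lambda,s+\delta)$, obtained by setting $s = r-\delta$. Choosing $\delta = s/2$ and invoking Theorem~\ref{thm::upper-bounds-liminf} at radius $3s/2$ bounds the right-hand side by $(3s/2)^2$, $\exp\{-e^{C\ln(2/(3s))/\ln\ln(2/(3s))}\}$, or $\exp\{-C_d(3s/2)^{-(d-3)/2}\}$ according to the dimension. Each of these expressions has the form asserted in the corollary, up to relabelling the constants: in $d=2$ the factor $(3/2)^2$ is absorbed into the implicit constant; in $d=3$ the identity $\ln(2/(3s)) = \ln(1/s) - \ln(3/2)$ gives $C\ln(2/(3s))/\ln\ln(2/(3s)) = (1+o(1))C\ln(1/s)/\ln\ln(1/s)$ as $s\to 0$, so one may take $C' \le C$ for small $s$ (the bound is trivial for $s$ bounded away from $0$); and in $d\ge 4$ the factor $(3/2)^{-(d-3)/2}$ is absorbed into $C'_d$.

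There is no real technical obstacle here: the corollary is a routine juxtaposition of previously established results, and the only care required is to check the stability of each upper-bound expression under the multiplicative perturbation $s \mapsto (3/2)s$ needed to pass from a $\liminf$-bound to a bound on $m_d(s)$. The one genuinely non-trivial input is the pair of comparisons relating $m_d(r)$ and $\liminf_{\lambda\to\infty} m_d(\lambda,r)$, which rests on the absolute continuity of quantum limits and is postponed to \S\ref{sec::QL-relation}.
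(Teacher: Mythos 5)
Your proposal is correct and follows essentially the same route as the paper, which presents the corollary as an immediate consequence of Theorems~\ref{thm::upper-bounds-liminf} and~\ref{thm::lower-bounds} via the comparison \eqref{eq::QL-relation} and the inequality $m_d(r) \ge \liminf_{\lambda\to\infty} m_d(\lambda,r)$ from \S\ref{sec::QL-relation}. Your extra care about the stability of the upper-bound expressions under $s \mapsto \tfrac{3}{2}s$ (and the fact that the constraints $r > \lambda^{-\delta}$ are vacuous for fixed $r$ and large $\lambda$) fills in exactly the routine details the paper leaves implicit.
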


It is interesting to compare the behavior of quantum limits on tori with their behavior on the sphere or on hyperbolic surfaces.
Note that these are respectively manifolds of zero, positive, and negative curvature.
Roughly speaking, the corollary above expresses the fact that, near any point of a torus, quantum limits can vanish at most polynomially or exponentially.
This is intermediary between the case of the sphere, where quantum limits of highest weight spherical harmonics may vanish completely on some nonempty open sets, and the case of hyperbolic surfaces, where quantum unique ergodicity is expected (and indeed has been proved by Lindenstrauss \cite{Lindenstrauss} in the arithmetic case).

\subsection{Application to control theory} 

The controllability of the linear Schrödinger group $e^{it\Delta}$ without potential on tori has been established by Jaffard \cite{Jaffard} when $d = 2$ and by Komornik \cite{Komornik} and Macià \cite{Macia2011} using different methods for all $d\ge 2$.

Via a classical orthogonality trick, the observability of toral eigenfunctions immediately implies the observability --- and thus the controllability (by the Hilbert uniqueness method) --- of $e^{it\Delta}$ within the time interval $[0,2\pi]$:
\begin{align*}
    \int_0^{2\pi} \|e^{it\Delta} u_0\|_{L^2(B_r)}^2 \dd t
    & = \int_{B_r} \int_0^{2\pi} \biggl| \sum_{n\ge 0} e^{itn} \Pi_n u_0(x) \biggr|^2 \dd t \dd x
    = \sum_{n\ge 0} \int_{B_r} |\Pi_n u_0(x)|^2 \dd x \\
    & \ge \inf_{n \in \mathbb{N}} m_d(\sqrt{n},r) \sum_{n\ge 0} \int_{\mathbb{T}^d} |\Pi_n u_0(x)|^2 \dd x
    = \inf_{n \in \mathbb{N}} m_d(\sqrt{n},r) \|u_0\|_{L^2}^2,
\end{align*}
where $\Pi_n$ denotes the Fourier projector onto $\mathcal{E}_d(\sqrt{n})$.

For better observability estimates, e.g., with better constants and within shorter time periods, recall that the general functional analytic framework by Miller \cite{Miller} implied that the controllability of the Schrödinger group from $B_r$ is equivalent to the resolvent estimate
\begin{equation}
    \label{eq::inequality-control}
    \| u \|_{L^2}^2 \le m {\|u\|^2_{L^2(B_r)}} + M \| (\Delta + \tau^2) u \|_{L^2}^2,
\end{equation}
for some finite $m > 0$, $M > 0$, and for all $u \in C^\infty(\mathbb{T}^d)$ and $\tau \in \mathbb{R}$.
Furthermore, this estimate implies the observability
\begin{equation*}
     \|u_0\|_{L^2}^2 \le \frac{2mT}{T^2-\pi^2M} \int_0^T \|e^{it\Delta} u_0\|_{L^2(B_r)}^2 \dd t \qquad \forall T > \pi \sqrt{M}.
\end{equation*}

An inequality similar to \eqref{eq::inequality-control}  can be obtained immediately from the definition of $m_d(\lambda,r)$. Indeed, choose $\lambda^2 \in \mathbb{R}$, let $n$ be the closest integer, and split $u = u_0 + u_1$ where $u_0 \in \mathcal{E}_{n}$ and $u_1$ is orthogonal to $\mathcal{E}_{n}$.
As a consequence of our estimates, we get that
\begin{align*}
    \| u \|_{L^2}
    & \le \| u_0 \|_{L^2} + \| u_1 \|_{L^2} \\
    & \lesssim m_d(\sqrt n,r)^{-1} r^{-d} \| u_0 \|_{L^2(B_r)} + \| (\Delta + \lambda^2) u \|_{L^2} \\
    & \le m_d(\sqrt n,r)^{-1} r^{-d} \| u \|_{L^2(B_r)} + m_d(\sqrt n,r)^{-1} r^{-d} \| u_1 \|_{L^2(B_r)} + \| (\Delta + \lambda^2) u \|_{L^2} \\
    & \le m_d(\sqrt n,r)^{-1} r^{-d} \bigl[ \| u \|_{L^2(B_r)} +  \| (\Delta + \lambda^2) u \|_{L^2} \bigr].
\end{align*}
While this inequality is of the form \eqref{eq::inequality-control}, it is most likely not optimal.
We believe that it could be substantially improved by applying the methods laid out in the present paper to functions supported on spectral bands instead of eigenfunctions. Indeed, denoting by $m_d(\lambda,\delta,r)$ the optimal observability constant for $u$ supported spectrally (for $\sqrt{-\Delta}$) on $[\lambda-\delta,\lambda+\delta]$, the same proof as above gives
\begin{equation*}
    \| u \|_{L^2} \lesssim m_d(\sqrt n,\delta,r)^{-1} r^{-d} \bigl[ \| u \|_{L^2(B_r)} + (\lambda \delta)^{-1} \| (\Delta + \lambda^2) u \|_{L^2} \bigr].
\end{equation*}
We do not attempt to go any further in this direction, but it seems potentially interesting.

\subsection{Open questions} 

The results which have been stated so far give a very lacunary picture, and much remains to be understood.
We start with two conjectures, for which strong evidence is available.
First, we conjecture that the examples constructed in \S\ref{sectionpointwise} give the highest possible order of vanishing.
See Remark~\ref{rem::vanishing-order-discuss} for an heuristic discussion.

\begin{conj}
    \label{conj::vanishing-order}
    For all $d \geq 2$, there holds
    \begin{equation*}
        \Gamma_d(\lambda) \sim_d \mathcal{N}_d(\lambda)^{\frac{1}{d-1}}.
    \end{equation*}
\end{conj}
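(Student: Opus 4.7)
The plan is to recast the order of vanishing as a polynomial interpolation problem on $\mathbb{S}^{d-1}$, after which the conjectured estimate amounts to a sharp statement about how well the directions of $\mathcal{S}_d(\lambda)$ resolve low-degree polynomials on the sphere. The lower bound $\Gamma_d(\lambda) \gtrsim_d \mathcal{N}_d(\lambda)^{1/(d-1)}$ will drop out from a dimension count; the upper bound is the substantive content.

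For the reformulation, I would apply the Rayleigh / Jacobi--Anger plane-wave expansion
\begin{equation*}
    e^{ik\cdot x} = c_d \sum_{n\ge 0} i^n\, g_n(|k||x|) \sum_{Y \in \mathrm{ONB}(V_n)} \overline{Y(k/|k|)}\, Y(x/|x|),
\end{equation*}
where $V_n$ is the space of spherical harmonics of degree $n$ on $\mathbb{S}^{d-1}$, of dimension $h_n \sim_d n^{d-2}$, and $g_n(r) = r^{-(d-2)/2} J_{n+(d-2)/2}(r) \sim r^n$ as $r\to 0$. For $u = \sum_{k \in \mathcal{S}_d(\lambda)} \widehat{u}_k e^{ik\cdot x}$, the Taylor expansion of each mode $g_n(\lambda|x|) Y(x/|x|)$ begins at a harmonic polynomial of degree exactly $n$; since harmonic polynomials of distinct degrees are linearly independent, one obtains the dictionary
\begin{equation*}
    \Gamma(u) \ge N \;\Longleftrightarrow\; \sum_{k \in \mathcal{S}_d(\lambda)} \widehat{u}_k\, P(k/\lambda) = 0 \quad \text{for every } P \in \mathrm{Pol}^{<N}(\mathbb{R}^d)\big|_{\mathbb{S}^{d-1}}.
\end{equation*}

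Consequently, $\Gamma_d(\lambda) \ge N$ iff the evaluation map
\begin{equation*}
    \mathrm{ev}_N : \bigoplus_{n=0}^{N-1} V_n \longrightarrow \mathbb{C}^{\mathcal{S}_d(\lambda)}, \qquad Y \mapsto \bigl(Y(k/\lambda)\bigr)_{k \in \mathcal{S}_d(\lambda)},
\end{equation*}
\emph{fails} to be surjective. Its source has dimension $\sum_{n<N} h_n \sim_d N^{d-1}$ and its target dimension $\mathcal{N}_d(\lambda)$, so non-surjectivity is automatic once $N^{d-1} \lesssim_d \mathcal{N}_d(\lambda)$; this recovers the lower bound on $\Gamma_d(\lambda)$ established in \S\ref{sectionpointwise}. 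The conjecture then reduces to the converse statement: $\mathrm{ev}_N$ is surjective once $N \ge C_d \mathcal{N}_d(\lambda)^{1/(d-1)}$ for an appropriate $C_d$.

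To prove this surjectivity, one must show that no nonzero spherical polynomial of degree $< C_d \mathcal{N}_d(\lambda)^{1/(d-1)}$ vanishes on all of $\mathcal{S}_d(\lambda)/\lambda$. I would attempt this by combining the polynomial method developed in \S\ref{sectionpointwise} with quantitative arithmetic input on the location of the lattice points: for $d = 2, 3$, via the explicit parametrizations through Gaussian integers and Hurwitz quaternions, together with Linnik--Duke--Iwaniec equidistribution and subconvexity; for $d \ge 4$, via the orbit structure of $\mathcal{S}_d(\lambda)$ under $\mathrm{SO}_d(\mathbb{Z})$ combined with Siegel mass formula estimates. The main obstacle is \emph{sharpness}: at the conjectured threshold the source and target of $\mathrm{ev}_N$ match up to an absolute constant, so no $\varepsilon$-loss in the exponent is permitted, yet every available equidistribution statement for integer points on spheres carries a logarithmic, subconvexity, or polynomial-type loss. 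Extracting the exact exponent $1/(d-1)$ appears to require exploiting the full arithmetic rigidity of $\mathcal{S}_d(\lambda)$ rather than merely its angular density, and this is where the essential difficulty lies.
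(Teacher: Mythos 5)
The statement you are trying to prove is stated in the paper as an open conjecture: the paper establishes only the lower bound $\Gamma_d(\lambda)\gtrsim_d \mathcal{N}_d(\lambda)^{\frac{1}{d-1}}$ (Theorem~\ref{thm::vanishing}) and offers merely a heuristic for the matching upper bound (Remark~\ref{rem::vanishing-order-discuss}). Your proposal is in the same position. The lower bound you obtain via the plane-wave expansion and the count $\sum_{n<N}\dim V_n\sim_d N^{d-1}$ is correct and is the same dimension count as in the paper, just organized through spherical harmonics instead of the reduced monomials $k^\alpha$ with $\alpha_d\in\{0,1\}$ (the two counts agree, since $\sum_{n\le N}\dim V_n=\binom{N+d-1}{d-1}+\binom{N+d-2}{d-1}$). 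But the upper bound --- the entire content of the conjecture --- is not proved: your final paragraph is a program (``I would attempt this by combining\dots'') that ends by conceding that every available equidistribution input loses an $\epsilon$ in the exponent, which is exactly why this remains open. The best upper bounds actually known are $\Gamma_d(\lambda)\le\mathcal{N}_d(\lambda)-2$ and $\Gamma_d(\lambda)\le 2(d-2)\lambda+e^{C\ln\lambda/\ln\ln\lambda}$, both far from $\mathcal{N}_d(\lambda)^{1/(d-1)}$.

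There is also a logical slip in the reduction itself. Surjectivity of $\mathrm{ev}_N$ is \emph{not} equivalent to the statement that no nonzero spherical polynomial of degree $<N$ vanishes on all of $\mathcal{S}_d(\lambda)/\lambda$; that latter statement is \emph{injectivity} of $\mathrm{ev}_N$, which necessarily fails in the regime $N^{d-1}\gg_d\mathcal{N}_d(\lambda)$ you care about, and in any case does not imply surjectivity. What you actually need is linear independence of the point-evaluation functionals $P\mapsto P(k/\lambda)$, $k\in\mathcal{S}_d(\lambda)$, on $\mathrm{Pol}^{\le N}(\mathbb{R}^d)|_{\mathbb{S}^{d-1}}$ --- equivalently, the existence for each $\ell\in\mathcal{S}_d(\lambda)$ of an interpolating polynomial $P_\ell$ of degree $\lesssim_d\mathcal{N}_d(\lambda)^{\frac{1}{d-1}}$ with $P_\ell(k)=\bm{1}_{k=\ell}$, which is precisely the formulation of Remark~\ref{rem::vanishing-order-discuss}. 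The obstruction identified there --- possible nontrivial algebraic relations among the points of $\mathcal{S}_d(\lambda)$ --- is the concrete form of the difficulty you gesture at, and neither your outline nor the paper resolves it.
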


Second, we conjecture that the optimal lower bound for $m_d(r)$ when $d \ge 4$ matches the upper bound given in Corollary \ref{cor::quantum-limit}.

\begin{conj}
    If $d\ge 4$, then
    \begin{equation*}
        \exp\bigl\{-C_d r^{3-d}\bigr\} \lesssim m_d(r) \lesssim \exp\bigl\{-C'_d r^{3-d} \bigr\}.
    \end{equation*}
\end{conj}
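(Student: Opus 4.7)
The upper bound $m_d(r) \lesssim \exp\{-C_d' r^{-(d-3)/2}\}$ follows from Corollary~\ref{cor::quantum-limit}, so the content of the conjecture lies entirely in the matching lower bound, and via \eqref{eq::QL-relation} it suffices to improve the factorial exponent $h(d)$ of Theorem~\ref{thm::lower-bounds} down to $(d-3)/2$ in the lower bound for $\liminf_{\lambda \to \infty} m_d(\lambda,r)$.

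My plan is to reverse-engineer the extremal wave-packet construction driving Theorem~\ref{thm::upper-bounds-liminf}. Those eigenfunctions localise their Fourier mass on a single spherical cap of $\lambda \mathbb{S}^{d-1}$ of angular radius $\theta$ tied to $r$, so the exponent $(d-3)/2$ must reflect the number of integer points in one such cap rather than the global count $\mathcal{N}_d(\lambda)$. Concretely, I would proceed in three steps: \textbf{(i)} decompose an arbitrary $u \in \mathcal{E}_d(\lambda)$ as $u = \sum_j u_j$, where each piece has Fourier support in a cap of angular radius $\theta$ taken from a finitely overlapping covering of the sphere; \textbf{(ii)} establish a decoupling inequality of the form $\|u\|_{L^2(B_r)}^2 \gtrsim \sum_j \|u_j\|_{L^2(B_r^\ast)}^2$ on a slightly enlarged ball, using near-orthogonality of wave packets with disjoint principal directions; \textbf{(iii)} prove a cap-local Nazarov--Tur\'an estimate
\[
\fint_{B_r^\ast} |u_j|^2 \dd x \gtrsim \exp\bigl\{-C N_{\mathrm{cap}}(\lambda,\theta)\bigr\} \fint_{\mathbb{T}^d} |u_j|^2 \dd x,
\]
where $N_{\mathrm{cap}}(\lambda,\theta) = \max_{k_0 \in \mathcal{S}_d(\lambda)} \sharp\{k \in \mathcal{S}_d(\lambda) : \angle(k,k_0) \le \theta\}$. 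Choosing $\theta$ as the critical scale at which $N_{\mathrm{cap}}(\lambda,\theta)$ matches the target $r^{-(d-3)/2}$, and combining with Bourgain--Rudnick/Jarn\'ik-type counts on spherical caps, would then yield the conjectured exponent independent of $\lambda$.

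The main obstacle is step \textbf{(iii)}. The Nazarov--Tur\'an estimates used in \S\ref{sec::nazarov} and in \cite{Fontes-Merz} are sensitive only to the cardinality of the exponent set and not to its geometric spread; applied naively to $u_j$, they yield the much larger exponent $\mathcal{N}_d(\lambda)$. Replacing this by $N_{\mathrm{cap}}$ requires an anisotropic version of the inequality adapted to the parabolic aspect ratio of the cap (tangential size $\lambda\theta$ against radial thickness $\lambda\theta^2$), for which no off-the-shelf tool seems available. A promising route is to extract the carrier $e^{ik_0 \cdot x}$, reduce $u_j$ to a low-frequency envelope on a tangent-plane model, and apply a weighted Jensen-type inequality on a thin complex tube around $B_r$. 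A secondary, arithmetic difficulty is to control $N_{\mathrm{cap}}(\lambda,\theta)$ uniformly down to the critical scale, where current equidistribution results on spherical caps lose an unknown $\lambda^\epsilon$ factor that would need to be absorbed into $C_d$. Step \textbf{(ii)} is morally a toral decoupling statement at sub-wavelength scales and should be within reach of Bourgain--Demeter-type methods, though the enlargement $B_r^\ast$ must be small enough not to contaminate the wave-packet orthogonality.
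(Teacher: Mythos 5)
This statement is Conjecture~B of the paper: it is posed as an open problem, and the paper itself proves only the upper bound (via Theorem~\ref{thm::QL-upperbound-high-dim}) together with a much weaker lower bound $m_d(r)\gtrsim \exp\{-C_d r^{-h(d)-\epsilon}\}$ with $h(d)$ growing factorially (Theorem~\ref{thm::lower-bound-QL} and Remark~\ref{rem::lower-bound-exponent-estimate}). Your proposal correctly locates the content of the conjecture in the lower bound, but it does not prove it: by your own admission, step~\textbf{(iii)} --- a Nazarov--Tur\'an inequality whose exponent is the number of lattice points in a single cap rather than the total number of frequencies --- is exactly the missing ingredient, and no such anisotropic estimate is currently available. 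A sketch whose pivotal lemma is flagged as ``no off-the-shelf tool seems available'' is a research programme, not a proof, so the statement remains unestablished.

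Beyond that, step~\textbf{(ii)} has a concrete defect as formulated. If the caps come from a finitely overlapping covering of the sphere, then adjacent caps have Fourier supports at distance zero, so the cross terms $\int \chi(x/r)\,u_j\overline{u_{j'}}\,\dd x$ between neighbouring pieces are not small and the claimed near-orthogonality fails; this is precisely why the paper's Lemma~\ref{lem::decomposition} works with the connected components of the proximity graph $G(V,\varrho)$, which are pairwise separated by $\ge\varrho$ in frequency, rather than with an arbitrary covering. Even granting a separated decomposition, the error term $\sharp V\sup_{|\xi|\ge r\varrho}|\widehat\chi(\xi)|\,\|u\|_{L^2}^2$ must be beaten by a main term as small as $\exp\{-Cr^{-(d-3)/2}\}$, which forces the sub-exponential Beurling--Malliavin cutoffs of Lemma~\ref{lem::Beurling--Malliavin} and a quantitative lower bound on the inter-cluster separation $\varrho$ in terms of $\lambda$ --- the paper obtains such separation only through Connes' hyperplane clustering (Lemmas~\ref{lem::jarnik-connes} and~\ref{lem::connes}), and it is this induction on hyperplane sections, not a cap decomposition, that produces the factorial loss. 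Your route is genuinely different from the paper's and, if the cap-local estimate could be proved, would plausibly give the sharp exponent; but as it stands both the decoupling step and the key cap-local inequality are unsubstantiated, so the conjecture is not resolved by this argument.
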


Beyond these two conjectures, it is tempting to ask if the upper bounds for $m_d(\lambda,r)$ given in \S\ref{sec::example-upperbound} are are optimal and if matching lower bounds exist, at least when $d \ge 5$.
However, our construction of the examples might not be sharp. 
The distribution of integer points in low dimensions ($d=2,3,4$) is more irregular, and the upper bounds we are able to prove might be far off the mark.

\section{Other geometries}
\label{sec::other-geometries}

Consider the analog on a general compact Riemannian manifold $M$ by setting
\begin{equation}
    \label{eq::general-m}
    m_{M,x_0}(\lambda,r) = \inf_{0\ne u \in \mathcal{E}_M(\lambda)} \frac{\displaystyle \fint_{B_r(x_0)} |u(x)|^2 \dd x}{\displaystyle \fint_{M} |u(x)|^2 \dd x},
\end{equation}
where $\mathcal{E}_M(\lambda)$ is the set of all eigenfunctions of the Laplace-Beltrami operator $-\Delta_M$ with eigenvalue $\lambda^2$ and $B_r(x_0)$ is the geodesic ball centered at $x_0 \in M$ with radius $r > 0$.
Clearly, we need $r < \operatorname{Inj}_{x_0} M$, the injective radius at $x_0$.

\subsection{General manifolds}

Considering a general compact manifold $M$, following the doubling inequality of Donnelly--Fefferman \cite{DonnellyFefferman}, Logunov and Malinnikova \cite{LogunovMalinnikovaICM, LogunovMalinnikova} proved the Remez inequality
\begin{equation*}
    \| u \|_{L^\infty(M)} \lesssim \| u \|_{L^\infty(E)} \Bigl( \frac{C |M|}{|E|} \Bigr)^{C \lambda}
\end{equation*}
for all $u \in \mathcal{E}_M(\lambda)$, all Borel $E \subset M$ with positive measure.
As in the proof of Lemma~\ref{lem::nazarov}, it can be deduced from this Remez inequality that (for some different constant $C$):
\begin{equation}
    \label{remezgeneral}
    m_{M,x_0}(\lambda,r) \gtrsim (Cr)^{C\lambda}.
\end{equation}

\subsection{Spheres}

On the sphere $\mathbb{S}^d$, we will investigate the quotient
\begin{equation*}
    \frac{\displaystyle \fint_{B_r(x_0)} |u(x)|^2 \dd x}{\displaystyle \fint_{\mathbb{S}^d} |u(x)|^2 \dd x}
\end{equation*}
on the highest weight spherical harmonics $H_n(x) = n^{\frac{d-1}{4}} z^n$ where $x = (x^1,\ldots,x^{d+1}) \in \mathbb{R}^{d+1}$, $z=x^1+ix^2 \in \mathbb{C}$ and $n \in \mathbb{N}$.
These are eigenfunctions of the spherical Laplacian associated with eigenvalue $\lambda_n^2 = n(d-1+n)$.
They are well-known to maximize various inequalities related to the concentration of eigenfunctions.
It is classical that $\|H_n\|_{L^2(\mathbb{S}^d)} \sim 1$ (see e.g., \cite[\S4]{Sogge2016Lp}).
For $x_0 = (0,0,1,0,\dots,0)$ and $r \ll 1$, we obtain
\begin{equation*}
    m_{\mathbb{S}^d,x_0}(\lambda_n,r) 
    \le \fint_{B_r(x_0)} |H_n(x)|^2 \dd x
    \sim_d n^{\frac{d-1}{2}} \fint_{|z|\le C_d r} |z|^{2n} |\dd z|
    \sim n^{\frac{d-1}{2}} (C_d r)^{2n},
\end{equation*}
which gives the sharpness of the lower bound \eqref{remezgeneral}.

\subsection{Surfaces of negative curvature} 

When $M$ is a compact surface of negative curvature, Hezari--Rivière \cite{HezariRiviere} showed that if $r \gtrsim (\ln \lambda)^{-1}$, then
\begin{equation*}
    m_{M,x_0}(\lambda,r) \sim_M 1.
\end{equation*}
In the case of arithmetic surfaces, much stronger results can be obtained, see Luo--Sarnak \cite{LuoSarnak} and Young \cite{Young}.

\subsection{Irrational tori and spectral bands} 

We consider now a general torus $\mathbb{T}^d_\Lambda = \mathbb{R}^d / \Lambda$, where $\Lambda$ is a lattice of full dimension. 
The definition \eqref{eq::general-m} should be modified in this case since, for a generic choice of $\Lambda$, the eigenspaces have bounded dimension (equal to $2$).
Instead, projection on eigenspaces should be replaced by projection on narrow spectral bands.
Precisely, we propose the definition
\begin{equation}
    \label{neweq::def-m}
    m_\Lambda(\lambda, r) = \inf_{u \in \mathcal{E}_\Lambda\bigl(\lambda - \frac{1}{2\lambda}, \lambda + \frac{1}{2\lambda}\bigr) } \frac{\displaystyle \fint_{B_r} |u(x)|^2 \dd x}{\displaystyle \fint_{\mathbb{T}^d_\Lambda} |u(x)|^2 \dd x},
\end{equation}
where $\mathcal{E}_\Lambda\bigl(\lambda - \frac{1}{2\lambda}, \lambda + \frac{1}{2\lambda}\bigr)$ is by definition the union of all eigenspaces whose eigenvalues lie between $\lambda - \frac{1}{2\lambda}$ and $\lambda + \frac{1}{2\lambda}$. 
The choice of the bandwidth $\frac{1}{2\lambda}$ ensures that the definition \eqref{neweq::def-m} and the original definition \eqref{eq::def-m} essentially agree. Indeed, eigenvalues of the Laplacian on $\mathbb{T}^d$ are given by $\mathbb{N}$.
The spacing between two consecutive eigenvalues is therefore equal to $1$ --- at least when $d \ge 4$ --- hence the spacing between two consecutive $\lambda$ for which $\lambda^2$ is an eigenvalue is $\sim \frac{1}{\lambda}$.

With the definition \eqref{neweq::def-m} above, do the theorems proved in the present paper extend to general $\Lambda$? Only partly. Roughly speaking, the arguments relying on Jarnik's lemma (Lemma~\ref{lem::jarnik}) or more generally Connes' result (Lemmas~\ref{lem::connes} and~\ref{lem::jarnik-connes}) still apply.
However, many counting estimates (e.g., number of lattice points on spheres in low dimension) for general $\Lambda$ are not yet well understood.

Finally, it would certainly be of interest to consider spectral bands of arbitrary width, on tori but also on other compact manifolds, but this goes well beyond the scope of the present paper.

\section{Pointwise vanishing}
\label{sectionpointwise}

In this section, we obtain bounds for $\Gamma_d(\lambda)$ (recalling its definition in \S\ref{sec::main-results}) when $\lambda \gg 1$.
\begin{itemize}
    \item In \S\ref{construction}, we give the lower bound
    \begin{equation*}
        \Gamma_d(\lambda) \gtrsim (d-1) \mathcal{N}_d(\lambda)^{\frac{1}{d-1}}.
    \end{equation*}
    \item In \S\ref{sec::maximal-Gamma}, we give two upper bounds: 
    \begin{equation*}
        \Gamma_d(\lambda) \le \mathcal{N}_d(\lambda)-2,
        \quad
        \Gamma_d(\lambda) \le 2(d-2)\lambda + \lambda^\epsilon,
    \end{equation*}
    for all $\epsilon > 0$, where a negative value on the right hand side means the nonexistence of eigenfunctions which vanishes at the origin.
    Note that when $d \ge 5$, the second bound gives a sharper estimate; whereas when $d=2$, the first bound is better.
\end{itemize}

\subsection{Eigenfunctions with high vanishing orders} 
\label{construction}

The following theorem gives an improvement of the result in Täufer \cite{Taufer}.
It is quantitative as it gives a lower bound for $\Gamma_d(\lambda)$, and valid in any dimension. We believe that this example is essentially optimal.
However we are far from being able to match this lower bound with an upper bound for $\Gamma_d(\lambda)$ (see \S\ref{sec::maximal-Gamma} for partial results).

\begin{thm}
    \label{thm::vanishing}
    For all $d \ge 2$, we have $\Gamma_d(\lambda) \ge N \in \mathbb{N}$ if
    \begin{equation*}
        \binom{N+d-1}{d-1} + \binom{N+d-2}{d-1} < \mathcal{N}_d(\lambda).
    \end{equation*}
    Consequently, there holds
    \begin{equation*}
        \Gamma_d(\lambda) \gtrsim (d-1)\mathcal{N}_d(\lambda)^{\frac{1}{d-1}}.
    \end{equation*}
    Particularly, if $d \ge 5$ then
    \begin{equation*}
        \Gamma_d(\lambda) \gtrsim (d-1) \lambda^{\frac{d-2}{d-1}}.
    \end{equation*}
\end{thm}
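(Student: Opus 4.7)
The plan is to produce $u\in\mathcal{E}_d(\lambda)$ vanishing to order $N$ at the origin purely by linear algebra: we count the free parameters (coefficients $c_k$ indexed by $\mathcal{S}_d(\lambda)$) against the number of effective linear constraints imposed by the vanishing condition, and use the spherical relation $|k|^2=\lambda^2$ to cut the constraint count down to $\binom{N+d-1}{d-1}+\binom{N+d-2}{d-1}$.

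Concretely, I would first write a general element of $\mathcal{E}_d(\lambda)$ as $u(x)=\sum_{k\in\mathcal{S}_d(\lambda)}c_k e^{ik\cdot x}$ and compute $\partial^\alpha u(0)=i^{|\alpha|}\sum_k c_k k^\alpha$. Thus $\Gamma(u)\ge N$ is equivalent to the family of linear conditions $\sum_k c_k k^\alpha=0$ for every multi-index $\alpha$ with $|\alpha|\le N$, or equivalently to $\sum_k c_k P(k)=0$ for every polynomial $P\in\mathbb{C}[x_1,\dots,x_d]$ of degree $\le N$. The map $\Phi\colon(c_k)\mapsto(\partial^\alpha u(0))_{|\alpha|\le N}$ therefore has rank equal to the dimension of polynomials of degree $\le N$ modulo the ideal of polynomials vanishing on $\mathcal{S}_d(\lambda)$.

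Next I would bound this rank by observing that $|x|^2-\lambda^2$ vanishes on all of $\mathcal{S}_d(\lambda)$, so the rank is at most $\dim\bigl(\mathbb{C}[x_1,\dots,x_d]_{\le N}\big/(|x|^2-\lambda^2)\bigr)$. Since multiplication by $|x|^2-\lambda^2$ is injective on polynomials and raises degree by $2$, this quotient has dimension
\begin{equation*}
\binom{N+d}{d}-\binom{N+d-2}{d}=\binom{N+d-1}{d-1}+\binom{N+d-2}{d-1},
\end{equation*}
by the Pascal-type identity $\binom{N+d}{d}-\binom{N+d-1}{d}=\binom{N+d-1}{d-1}$ applied twice. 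Consequently $\mathrm{rank}\,\Phi\le\binom{N+d-1}{d-1}+\binom{N+d-2}{d-1}$, and the hypothesis $\binom{N+d-1}{d-1}+\binom{N+d-2}{d-1}<\mathcal{N}_d(\lambda)=\sharp\mathcal{S}_d(\lambda)$ guarantees a nonzero vector in $\ker\Phi$, yielding the desired eigenfunction.

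For the asymptotic consequence, I would use $\binom{N+d-1}{d-1}+\binom{N+d-2}{d-1}\le 2(N+d-1)^{d-1}/(d-1)!$, so choosing $N$ to be the largest integer with $2(N+d-1)^{d-1}/(d-1)!<\mathcal{N}_d(\lambda)$ yields $N\gtrsim_d \mathcal{N}_d(\lambda)^{1/(d-1)}$; Stirling turns the prefactor $((d-1)!/2)^{1/(d-1)}$ into $\sim(d-1)/e$, matching the announced $(d-1)\mathcal{N}_d(\lambda)^{1/(d-1)}$ up to an absolute constant absorbed into $\gtrsim$. The last sentence follows by plugging in the counting fact $\mathcal{N}_d(\lambda)\sim_d \lambda^{d-2}$ for $d\ge 5$ recalled in the introduction. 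There is no real obstacle here: the only subtle point is recognizing that the spherical relation $|k|^2=\lambda^2$ is the unique arithmetic input one gets for free, and that exploiting it via the quotient $\mathbb{C}[x]_{\le N}/(|x|^2-\lambda^2)$ is precisely what produces the two-term binomial bound — a cleaner alternative, using spherical harmonics $\dim\mathcal{H}_j=\binom{j+d-1}{d-1}-\binom{j+d-3}{d-1}$ and summing telescopically over $j\le N$, gives the same count and could be used as a cross-check.
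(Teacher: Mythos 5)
Your proposal is correct and is essentially the paper's argument: both are rank--nullity counts in which the spherical relation $|k|^2=\lambda^2$ cuts the number of effective constraints to $\binom{N+d-1}{d-1}+\binom{N+d-2}{d-1}$, your quotient $\mathbb{C}[x]_{\le N}/(|x|^2-\lambda^2)$ being the same count the paper obtains by reducing to multi-indices with $\alpha_d\in\{0,1\}$. The asymptotic step, including the $[(d-1)!]^{1/(d-1)}\gtrsim d-1$ normalization and the use of $\mathcal{N}_d(\lambda)\sim_d\lambda^{d-2}$ for $d\ge 5$, also matches the paper.
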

\begin{proof}
    Expanding the Fourier series of $u$ in Taylor series at $0$, we find that $\Gamma(u)\ge N$ if and only if there holds
    \begin{equation}
        \label{eq::hirondelle}
        \sum_{k \in \mathcal{S}_d(\lambda)} \widehat{u}_k k^\alpha =0, \qquad \forall \alpha \in \mathbb{N}^d, \ |\alpha|\le N,
    \end{equation}
    where $k^\alpha = \prod_{j=1}^d k_j^{\alpha_j}$ and $|\alpha| = \sum_{j=1}^d \alpha_j$.
    Using $|k|^2 = \sum_{j=1}^d k_j^2 = \lambda^2$ for $k \in \mathcal{S}_d(\lambda)$, it suffices for \eqref{eq::hirondelle} to hold for all $\alpha \in \Sigma_0 \cup \Sigma_1$, where $\Sigma_j$ is the set of all $\alpha \in \mathbb{N}^d$ with $|\alpha| \le N$ and $\alpha_d=j$.
    Consider the linear map $\Phi:\mathbb{C}^{\mathcal{S}_d(\lambda)} \to \mathbb{C}^{ \Sigma_0 \cup \Sigma_1}$ defined by
    \begin{equation*}
        \Phi : (c_k)_{k \in \mathcal{S}_d(\lambda)} \mapsto \biggl( \sum_{k \in \mathcal{S}_d(\lambda)} c_k k^\alpha \biggr)_{\alpha \in \Sigma_0 \cup \Sigma_1}.
    \end{equation*}
    Clearly if $u \in \ker \Phi$, then $\Gamma(u) \ge N$.
    By the rank-nullity theorem, $\ker\Phi\ne\{0\}$ when $\mathcal{N}_d(\lambda) > \# \Sigma_0 + \# \Sigma_1$.
    Note that (see e.g., \cite[Lem~2.2]{Guth}):
    \begin{equation*}
        \# \Sigma_0 = \binom{N+d-1}{d-1},
        \quad
        \#\Sigma_1 = \binom{N+d-2}{d-1},
    \end{equation*}
    which implies, as $N \to \infty$,
    \begin{equation*}
        \# \Sigma_0 + \# \Sigma_1 \asymp \frac{2 N^{d-1}}{(d-1)!}.
    \end{equation*}
    Therefore $\Gamma_d(\lambda) \ge N$ provided
    \begin{equation*}
        N \ll [(d-1)!]^{\frac{1}{d-1}} \mathcal{N}_d(\lambda)^{\frac{1}{d-1}}.
    \end{equation*}
    We conclude with $[(d-1)!]^{\frac{1}{d-1}} \gtrsim d-1$.
\end{proof}

The exact same proof extends to general complex exponential polynomials.
In fact, if for any subset $\Lambda \subset \mathbb{R}^d$ let $\Gamma_d(\Lambda)$ be the supremum of $\Gamma(u)$ among all nonzero exponential polynomials of the form $u(x) = \sum_{k \in \Lambda} c_k e^{i k \cdot x}$, then the following theorem holds.

\begin{thm}
    \label{thm::vanishing-general}
    Let $d \ge 2$ and let $\Lambda \subset \lambda \mathbb{S}^{d-1}$ be a nonempty finite set.
    Then $\Gamma_d(\Lambda) \ge N$ if
    \begin{equation*}
        \binom{N+d-1}{d-1} + \binom{N+d-2}{d-1} < \# \Lambda.
    \end{equation*}
    Consequently, there holds
    \begin{equation*}
        \Gamma_d(\Lambda) \gtrsim (d-1) (\# \Lambda)^{\frac{1}{d-1}}.
    \end{equation*}
\end{thm}

\subsection{Maximal order of vanishing}
\label{sec::maximal-Gamma}

It follows from Donnelly--Fefferman \cite{DonnellyFefferman} that 
\begin{equation*}
    \Gamma_d(\lambda) \lesssim_d \lambda.
\end{equation*}
See also Logunov--Malinnikova \cite[Prop~2.4.1]{LogunovMalinnikova} for a simpler approach.
This result holds on any compact manifold, we aim to improve it on tori.
As already noted in \cite{BourgainRudnick2011bis}, Nazarov's theorem \cite[Theorem I]{Nazarov} (we give an extension of this theorem to higher dimensions in Lemma~\ref{lem::nazarov}) can be used to give a lower bound for $\Gamma_d(\lambda)$.
In the following lemma, we use this idea with more precision.

\begin{lem}
    For all $d \ge 2$ and $u \in \mathcal{E}_d(\lambda)$, there holds
    \begin{equation}
        \label{eq::lower-bound-infty-nazarov+FM}
        \| u \|_{L^\infty(B_r)} \ge (Cr)^{\mathcal{M}_d(\lambda)} \| u \|_{L^\infty}, \quad \mathcal{M}_d(\lambda) = \min \{ \mathcal{N}_d(\lambda) - 1, 2d \lambda \}.
    \end{equation}
    Consequently, we have
    \begin{equation*}
        \Gamma_d(\lambda) \le \mathcal{M}_d(\lambda)-1.
    \end{equation*}
\end{lem}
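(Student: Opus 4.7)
The plan is to first establish the $L^\infty$ lower bound \eqref{eq::lower-bound-infty-nazarov+FM} by combining two Nazarov--Tur\'an type inequalities applied to $u$ viewed as an exponential polynomial, and then derive $\Gamma_d(\lambda) \le \mathcal{M}_d(\lambda) - 1$ by matching this bound against the vanishing behaviour of $u$ at the origin.

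For the first inequality, I would exploit two complementary descriptions of $u = \sum_{k \in \mathcal{S}_d(\lambda)} \widehat{u}_k e^{ik\cdot x}$ as an exponential polynomial on $\mathbb{R}^d$. On one hand, $u$ has exactly $\mathcal{N}_d(\lambda)$ distinct frequencies, so a higher-dimensional Nazarov--Tur\'an inequality (the Fontes--Merz extension, or a slicing argument reducing to Nazarov's one-dimensional theorem applied along a generic line through the centre of $B_r$) yields
\begin{equation*}
\| u \|_{L^\infty(\mathbb{T}^d)} \le (C/r)^{\mathcal{N}_d(\lambda) - 1} \| u \|_{L^\infty(B_r)}.
\end{equation*}
On the other hand, every frequency lies in $B_\lambda$, so along any line the restriction of $u$ is a trigonometric polynomial in one variable of degree $\le \lambda$. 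A one-dimensional Remez-type inequality for such polynomials gives an exponent $2\lambda$ per coordinate direction; iterating along the $d$ axes (or invoking a multivariate Remez inequality for band-limited functions directly) then produces
\begin{equation*}
\| u \|_{L^\infty(\mathbb{T}^d)} \le (C/r)^{2d\lambda} \| u \|_{L^\infty(B_r)}.
\end{equation*}
Choosing the smaller of the two exponents and adjusting the constant gives \eqref{eq::lower-bound-infty-nazarov+FM}.

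For the consequence on $\Gamma_d(\lambda)$, I would suppose for contradiction that some $0 \ne u \in \mathcal{E}_d(\lambda)$ satisfies $\Gamma(u) \ge \mathcal{M}_d(\lambda)$. By definition $\partial^\alpha u(0) = 0$ for every $|\alpha| \le \mathcal{M}_d(\lambda)$, so Taylor's formula yields $|u(x)| \le C_u |x|^{\mathcal{M}_d(\lambda)+1}$ near the origin, and hence $\|u\|_{L^\infty(B_r)} \le C_u r^{\mathcal{M}_d(\lambda)+1}$ for small $r$. Comparing with \eqref{eq::lower-bound-infty-nazarov+FM}, this forces
\begin{equation*}
(Cr)^{\mathcal{M}_d(\lambda)} \| u \|_{L^\infty} \le C_u r^{\mathcal{M}_d(\lambda)+1},
\end{equation*}
which fails as $r \to 0^+$ unless $u \equiv 0$. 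Thus $\Gamma_d(\lambda) \le \mathcal{M}_d(\lambda) - 1$.

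The main technical obstacle will be producing both exponents on the same footing, with a single universal constant $C$. The $\mathcal{N}_d(\lambda) - 1$ bound follows cleanly from the classical Nazarov--Tur\'an inequality, but pinning down the $2d\lambda$ bound without degrading either the constant or the exponent may require care, either by invoking an appropriate multivariate Remez inequality off the shelf or by arranging the slicing argument so that the product of $d$ one-dimensional constants is absorbed into $C$ rather than inflating the exponent.
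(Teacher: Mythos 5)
Your proposal is correct and follows essentially the same route as the paper: the $\mathcal{N}_d(\lambda)-1$ exponent comes from the Nazarov--Fontes-Merz inequality, the $2d\lambda$ exponent comes from slicing along coordinate directions (where the restricted exponential polynomial has at most $2\lambda+1$ integer frequencies in $[-\lambda,\lambda]$) and iterating over the $d$ axes, and the bound on $\Gamma_d(\lambda)$ follows from the same Taylor-expansion comparison as $r\to 0^+$. The constant issue you flag is harmless, since each of the $d$ slicing steps contributes a factor $(Cr)^{2\lambda}$ with the same universal $C$, so the product is exactly $(Cr)^{2d\lambda}$.
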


The estimate \eqref{eq::lower-bound-infty-nazarov+FM} follows directly from \eqref{eq::nazarov-infty} and \eqref{eq::FM-finer-ineq}.
We now give a proof of
\begin{equation*}
    \|u\|_{L^\infty(B_r)} \ge (Cr)^{2d\lambda} \|u\|_{L^\infty},
\end{equation*}
which parallels \eqref{eq::FM-finer-ineq} but in a more illustrative form.
    
\begin{proof}
    Let $x_0 \in B_{r/2}$ and let $\bm{e}_j$ be the unit vector in direction of the $j$-th axis.
    Consider the exponential polynomial $f\in C^\infty(\mathbb{T}^1)$ defined by
    \begin{equation*}
        f(t) = u(x_0 + t \bm{e}_j).
    \end{equation*}
    The number of distinct exponents in $f$ is at most $2\lambda+1$ for they are projections of $\mathcal{S}_d(\lambda) \subset \lambda \mathbb{S}^{d-1}$ to $\mathbb{R}\bm{e}_j$ and lie in an interval of length $\le 2\lambda$.
    By Nazarov's inequality \eqref{eq::nazarov-infty} with $d=1$, we have
    \begin{equation*}
        \| f \|_{L^\infty(-r,r)} \ge (Cr)^{2\lambda} \| f \|_{L^\infty(\mathbb{T})}.
    \end{equation*}
    This implies
    \begin{equation*}
        \| u \|_{L^\infty(E_1)} \le (Cr)^{-2\lambda} \| u \|_{L^\infty(B_r)},
    \end{equation*}
    where $E_1$ is the union of all geodesics on $\mathbb{T}^d$ in the $\bm{e}_1$ direction which intersect with $B_{r/2}$. 
    
    One then iterates this construction: let $E_2$ be the union of all geodesics on $\mathbb{T}^d$ in the $\bm{e}_2$ direction which intersect with $E_{1}$. At each point of $E_2$, we apply the same argument (considering now $f(t) = u(x_0 +te_2)$) to obtain
    \begin{equation*}
        \|u\|_{L^\infty(E_2)} \le (Cr)^{-2\lambda} \|u\|_{L^\infty(E_{1})} \le (Cr)^{-4\lambda} \|u\|_{L^\infty(B_r)}.
    \end{equation*}
    At the $j$-th iterate, one obtains
    \begin{equation*}
        \|u\|_{L^\infty(E_j)} \le (Cr)^{-2\lambda} \|u\|_{L^\infty(E_{j-1})} \le (Cr)^{-2j\lambda} \|u\|_{L^\infty(B_r)},
    \end{equation*}
    where $E_j$ is the union of all geodesics on $\mathbb{T}^d$ in the $\bm{e}_j$ direction which intersect with $E_{j-1}$.
    We finish the proof at the $d$-th iterate.

    To see the upper bound for $\Gamma_d(\lambda)$, note that if $\Gamma(u) = N$ for some nonzero $u \in \mathcal{E}_d(\lambda)$ then $\|u\|_{L^\infty(B_r)} \lesssim_u r^{N+1}$.
    Also $\|u\|_{L^\infty} \gtrsim_u 1$ since $u \not\equiv 0$.
    Therefore by \eqref{eq::lower-bound-infty-nazarov+FM}, we have $r^{N+1} \gtrsim_u (Cr)^{\mathcal{M}_d(\lambda)}$.
    Choosing $0 < r \ll_{d,\lambda,u} 1$ yields $N+1 \le \mathcal{M}_d(\lambda)$.
\end{proof}

Note that when $\lambda \gg 1$, $\mathcal{M}_d(\lambda) = \mathcal{N}_d(\lambda) -1$ for $d=2$, $\mathcal{M}_d(\lambda) = 2 d \lambda$ for $d \ge 5$.
For $d=3,4$, we may have either $\mathcal{M}_d(\lambda) = \mathcal{N}_d(\lambda) -1$ or $\mathcal{M}_d(\lambda) = 2 d \lambda$, depending on the actual values of $\lambda$.
As a simple example, we have $\mathcal{M}_4(2^n) \le \mathcal{N}_4(2^n) -1 \lesssim 1$.
The estimate \eqref{eq::lower-bound-infty-nazarov+FM} applies to all functions with Fourier coefficients supported in balls of radius $\lambda$, which are much larger sets than $\mathcal{S}_d(\lambda)$.
We now turn to a more elementary method of proof, which relies solely on properties of polynomials and gives a slight improvement of the preceding bound.

\begin{thm} 
\label{thm::vanishing-order-upperbound}
    For all $d \ge 2$, there holds
    \begin{equation*}
        \Gamma_d(\lambda) \le D_d(\lambda)-1, \qquad
        D_d(\lambda) = \begin{cases}
            \mathcal{N}_d(\lambda)-1, & d=2;\\
            2(d-2) \lambda + \exp\bigl\{\frac{C\ln \lambda}{\ln \ln \lambda}\bigr\}, & d \ge 3.
        \end{cases}
    \end{equation*}
\end{thm}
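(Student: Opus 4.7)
The plan is to establish two bounds and combine them: first, a universal estimate $\Gamma_d(\lambda) \le \mathcal{N}_d(\lambda)-2$ via polynomial interpolation (which directly gives the case $d=2$); and second, for $d \ge 3$, a dimensional reduction $\Gamma_d(\lambda) \le 2\lambda + \max_{0 \le \mu \le \lambda} \Gamma_{d-1}(\mu)$ obtained by grouping Fourier modes according to the last coordinate and applying Vandermonde invertibility. Iterating the reduction down to the base case $d=2$ will produce the desired estimate.

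For the first bound, rewriting \eqref{eq::hirondelle} by linearity, the condition $\Gamma(u) \ge N$ is equivalent to $\sum_{k \in \mathcal{S}_d(\lambda)} \widehat{u}_k\, p(k) = 0$ for every polynomial $p \in \mathbb{R}[x_1,\ldots,x_d]$ of degree $\le N$. For each fixed $k_0 \in \mathcal{S}_d(\lambda)$, the product of affine forms $\ell_k(x) = \langle x - k,\, k_0 - k\rangle$, taken over $k \in \mathcal{S}_d(\lambda) \setminus \{k_0\}$, is a polynomial of degree $\mathcal{N}_d(\lambda)-1$ that vanishes at every $k \ne k_0$ but not at $k_0$. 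Therefore $N \ge \mathcal{N}_d(\lambda)-1$ forces $\widehat{u}_{k_0} = 0$ for every $k_0$, i.e.\ $u \equiv 0$. This gives $\Gamma_d(\lambda) \le \mathcal{N}_d(\lambda)-2$, which matches $D_2(\lambda)-1$ when $d=2$.

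For $d \ge 3$, I decompose $u(x',x_d) = \sum_{j \in J} e^{ijx_d}\, u_j(x')$, where $J = \pi_d(\mathcal{S}_d(\lambda)) \subset [-\lambda,\lambda]\cap\mathbb{Z}$ satisfies $|J|\le 2\lambda+1$, and each coefficient $u_j$ lies in $\mathcal{E}_{d-1}(\sqrt{\lambda^2-j^2})$. Expanding the Taylor series of $u$ at the origin, the hypothesis $\Gamma(u) \ge N$ becomes
\begin{equation*}
    \sum_{j \in J} (ij)^a\, \partial_{x'}^{\alpha} u_j(0) = 0
    \qquad \text{for all } \alpha \in \mathbb{N}^{d-1},\ a \in \mathbb{N} \text{ with } |\alpha|+a \le N.
\end{equation*}
For each $\alpha$ with $|\alpha| \le N - |J| + 1$, this provides at least $|J|$ equations in the $|J|$ unknowns $(\partial_{x'}^{\alpha} u_j(0))_{j \in J}$ whose coefficient matrix is (up to the diagonal factors $i^a$) a Vandermonde on the distinct nodes $\{j\}_{j\in J}$; hence all these derivatives vanish. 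Consequently every $u_j$ vanishes at the origin to order at least $N - |J| + 1 \ge N - 2\lambda$. Since $u\ne 0$ forces some $u_j$ to be nonzero, we conclude the reduction $\Gamma_d(\lambda) \le 2\lambda + \max_{0 \le \mu \le \lambda,\, \mathcal{E}_{d-1}(\mu) \ne \{0\}} \Gamma_{d-1}(\mu)$ (the inequality holds trivially in the vacuous range $N < |J|-1$ since then $N < 2\lambda$).

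Iterating this $d-2$ times and invoking the base case yields $\Gamma_d(\lambda) \le 2(d-2)\lambda + \max_{\mu \le \lambda}(\mathcal{N}_2(\mu)-2)$; combined with the bound $\mathcal{N}_2(\mu) \lesssim \exp\{C\ln\mu/\ln\ln\mu\}$ recalled in the introduction, this delivers $D_d(\lambda)-1$. The only routine verification is that the $\mathcal{N}_2$ upper envelope is (eventually) non-decreasing in $\mu$, so that the maximum over $\mu \le \lambda$ is controlled by its value at $\lambda$; apart from this bookkeeping, the argument rests solely on Vandermonde invertibility and elementary polynomial interpolation, and I do not foresee a genuine obstacle.
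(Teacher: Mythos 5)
Your argument is correct and is essentially the dual formulation of the paper's proof: the paper constructs indicator polynomials $P_\ell = Q_{\ell'}\cdot\prod_{n}\frac{X^d-n}{\ell^d-n}$ by induction on $d$, and your Vandermonde inversion in the last Fourier coordinate is exactly the degree-$2\lambda$ Lagrange factor in $X^d$, with the same base case of full interpolation on $\mathcal{S}_2(\lambda)$ (the paper uses the complex Lagrange product, you use the equally valid product of affine forms $\langle x-k,k_0-k\rangle$). The bookkeeping points you flag (the vacuous range, monotonicity of the divisor bound) are handled at the same level of rigor as in the paper.
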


\begin{proof}
    We prove by induction on the dimension $d$ that: for all $\lambda$ and all $\ell \in \mathcal{S}_d(\lambda)$, there exists a polynomial $P_\ell \in \mathbb{R}[X_1,\dots,X_d]$ with $\deg P_\ell \le D_d(\lambda)$ such that $P_\ell(k) = \bm{1}_{k=\ell}$ for all $\ell \in \mathcal{S}_d(\lambda)$.
    Linear combinations of \eqref{eq::hirondelle} yield that 
    \begin{equation*}
        \sum_{k \in \mathcal{S}_d(\lambda)} \widehat{u}_k P(k) = 0
    \end{equation*}
    for all $P \in \mathbb{P}[X_1,\ldots,X_d]$ with $\deg P \le \Gamma_d(\lambda)$ and for all $u \in \mathcal{E}_d(\lambda)$.
    Therefore, if we assume by contradiction that $\Gamma_d(\lambda) \ge D_d(\lambda)$, then
    \begin{equation*}
        \widehat{u}_\ell = \sum_{k \in \mathcal{S}_d(\lambda)} \widehat{u}_k P_\ell(k) = 0
    \end{equation*}
    for all $\ell \in \mathcal{S}_d(\lambda)$ which implies $u=0$.

    To construct such polynomials, when $d=2$ we simply let
    \begin{equation*}
        P_\ell(X_1,X_2) = \Re \prod_{k \in \mathcal{S}_2(\lambda) \backslash \{\ell\}} \frac{(X_1+iX_2)-(k_1+ik_2)}{(\ell_1+i\ell_2)-(k_1+ik_2)},
    \end{equation*}
    where $\Re z$ denotes the real part of a complex number $z$.
    When $d \ge 3$, write $\ell = (\ell',\ell_d)$ where $\ell' \in \mathbb{Z}^{d-1}$ and $\ell_d \in \mathbb{Z}$.
    Let $\lambda' = |\ell'|$ and put
    \begin{equation*}
        P_\ell(X_1,\ldots,X_d)
        = Q_{\ell'}(X_1,\ldots,X_{d-1}) \prod_{n \in [-\lambda,\lambda] \cap \mathbb{Z} \backslash \{\ell_d\}} \frac{X_d - n}{\ell_d - n},
    \end{equation*}
    where $Q_{\ell'} \in \mathbb{R}[X_1,\ldots,X_{d-1}]$ satisfies $\deg Q_{\ell'} \le D_{d-1}(\lambda')$ and $Q_{\ell'}(k') = \bm{1}_{k'=\ell'}$ for all $k' \in \mathcal{S}_{d-1}(\lambda')$.
    Clearly, if $d=2$ then $\deg P_\ell = \mathcal{N}_2(\lambda)-1$; if $d \ge 3$, then by the induction hypothesis, there holds
    \begin{equation*}
        \deg P_\ell \le \deg Q_{\ell'} + 2\lambda
        \le D_{d-1}(\lambda') + 2\lambda \le \exp\Bigl\{\frac{C\ln \lambda'}{\ln \ln \lambda'} \Bigr\} + 2(d-3)\lambda' + 2\lambda.\qedhere
    \end{equation*}
\end{proof}

\begin{rem}
    \label{rem::vanishing-order-discuss}
    It seems likely that the optimal estimate is
    \begin{equation*}
        \Gamma_d(\lambda) \lesssim \mathcal{N}_d(\lambda)^{\frac{1}{d-1}}.
    \end{equation*}
    This is a consequence of the following heuristic argument.
    As in the proof above, it suffices to find, for all $\lambda$ and $\ell \in \mathcal{S}_d(\lambda)$, a polynomial $P_\ell \in \mathbb{R}[X^1,\ldots,X^d]$ with $\deg P_\ell \lesssim \mathcal{N}_d(\lambda)^{\frac{1}{d-1}}$ and which satisfies $P_\ell(k) = \bm{1}_{k=\ell}$ for all $k \in \mathcal{S}_d(\lambda)$.

    Choosing a projection onto a hyperplane $\Pi : \mathbb{R}^d \to V \simeq \mathbb{R}^{d-1}$ such that $\Pi|_{\mathcal{S}_d(\lambda)}$ is injective, it suffices to find, for all $\ell \in \mathcal{S}_d(\lambda)$, a polynomial $Q_\ell \in \mathbb{R}[X_1,\ldots,X_{d-1}]$ with $\deg Q_\ell \lesssim \mathcal{N}_d(\lambda)^{\frac{1}{d-1}}$ and satisfies $Q_\ell(\Pi k) = \bm{1}_{k=\ell}$ for all $k \in \mathcal{S}_d(\lambda)$.

    Let $\mathcal{P}_N = \{P \in \mathbb{R}[X_1,\ldots,X_{d-1}] \mid \deg P \le N\}$.
    Then
    \begin{equation*}
        \dim \mathcal{P}_N = \binom{N+d-1}{d-1} \asymp \frac{N^{d-1}}{(d-1)!}.
    \end{equation*}
    For $\ell \in \mathcal{S}_d(\lambda)$, by the rank-nullity theorem, the linear map
    \begin{equation*}
        \Phi_\ell : \mathcal{P}_N \to \mathbb{R}^{\mathcal{N}_d(\lambda)-1},
        \quad
        P \mapsto (P(\Pi k))_{k \in \mathcal{S}_d(\lambda) \backslash \{k\}}
    \end{equation*}
    has a nontrivial kernel if $\dim \mathcal{P}_N > \mathcal{N}_d(\lambda)-1$, and consequently when
    \begin{equation*}
        N \gg_d \mathcal{N}_d(\lambda)^{\frac{1}{d-1}}.
    \end{equation*}
    Let us choose a nonzero $P_\ell \in \ker \Phi_\ell$ with minimal degree.
    We would have found the desired polynomial if $\ell \notin Z_\ell$, the zero set of $P_\ell$.
    The chance for this to happen is definitely not small for $Z_\ell$ has zero measure.
    In fact, this argument applies unless elements of $\mathcal{S}_d(\lambda)$ satisfy some non-trivial algebraic relations.
\end{rem}

\section{Examples and upper bounds}
\label{sec::example-upperbound}

The aim of this section is to construct examples of eigenfunctions with eigenvalue $\lambda^2$ achieving small $L^2$-norms on balls of radius $r$.
This is of course equivalent to bounding $m_d(\lambda,r)$ from above.

Our main estimate is stated in Theorem~\ref{thm::upperbound-via-vanishing}, which considers eigenfunctions whose Fourier coefficients are supported in an arbitrary $\Lambda \subset \mathcal{S}_d(\lambda)$.
This estimate depends on the number of lattice points in $\Lambda$ and the diameter of $\Lambda$.
Corollary~\ref{cor::upperbound-high-dimension} considers the special cases where $\Lambda$ is the set of all integer points in a spherical cap, giving upper bounds that rely on the competition between the small radius $r$ and the density of integer points on spherical caps.
Finally in \S\ref{sec::example-degenerate}, we construct examples with $\Lambda = \mathcal{S}_d(\lambda) \cap H$ where $H$ is a hyperplane. For these examples we obtain estimates independent of $\lambda$ and thus give upper bounds for $m_d(r)$.

\subsection{A simple example}

First let us give a simple example which gives the upper bound $m_d(r) \lesssim r^2$ in all dimensions.
In \S\ref{sec::lowerbounds}, we will see that $m_2(r) \gtrsim r^2$, and therefore $m_2(r) \sim r^2$.
In \S\ref{sec::example-degenerate} we will give an improvement of this upper bound in higher dimensions.

\begin{lem}
    For all $d \ge 2$ and $r\in(0,1)$, we have $m_d(r)  \lesssim r^2$.
\end{lem}
\begin{proof}
    For $n \in \mathbb{N}$, let $a_n=(n,n+1,0,\ldots,0)$ and $b_n=(n+1,n,\ldots,0) \in \mathcal{S}_d(\lambda_n)$ where
    \begin{equation*}
        \lambda_n = \sqrt{n^2+(n+1)^2},
    \end{equation*}
    which tends to infinity as $n\to\infty$.
    Then $|a_n-b_n| = \sqrt{2}$.
    Writing $x = (x_1,\ldots,x_d)$, let
    \begin{equation*}
        u_n(x)
        = e^{i a_n\cdot x} - e^{i b_n \cdot x}
        = e^{i n(x_1+x_2)} (e^{i x_1}-e^{i x_2}).
    \end{equation*}
    Then $u_n \in \mathcal{E}_d(\lambda_n)$ with $\| u \|_{L^2} \sim 1$.
    We conclude with \eqref{eq::QL-relation} and the following estimate:
    \begin{align*}
        m_d(\lambda_n,r)
        \le \fint_{B_r}|u_n(x)|^2  \dd x
        & = \fint_{B_r} |e^{i x_1}-e^{i x_2}|^2  \dd x \\
        & = \fint_{B_r} \bigl(|x_1-x_2| + \mathcal{O}(r^2)\bigr)^2  \dd x
        \sim r^2.
        \qedhere
    \end{align*}
\end{proof}

\subsection{Eigenfunctions with high order pointwise vanishing}
\label{sec::upperbound-vanishing}

Using results on the pointwise vanishing of eigenfunctions, we can give upper bounds on $m_d(\lambda,r)$.

\begin{thm}
    \label{thm::upperbound-via-vanishing}
    For all $d \ge 2$ and all nonempty $\Lambda \subset \mathcal{S}_d(\lambda)$, there holds:
    \begin{equation}
    \label{eq::upperbound-via-vanishing}
        m_d(\lambda,r)
        \lesssim_d \frac{(\# \Lambda)^{\frac{1}{2}}}{\sqrt{\Gamma_d(\Lambda)+1}} \Bigl(\frac{e\cdot r\diam \Lambda}{\Gamma_d(\Lambda)+1}\Bigr)^{\Gamma_d(\Lambda)+1},
    \end{equation}
    where $\Gamma_d(\Lambda)$ was defined above Theorem~\ref{thm::vanishing-general}.
\end{thm}
\begin{proof}
    Choose an eigenfunction $u \in \mathcal{E}_d(\lambda)$ whose Fourier coefficients are supported in $\Lambda$ and satisfies $\Gamma(u) = \Gamma_d(\Lambda) = N$. Let $k_0 \in \Lambda$ and set $v(x) = e^{-i k_0 \cdot x} u(x)$, so that $v$ still vanishes to order $N$ at the origin, but its Fourier coefficients are supported in $\Lambda-k_0$.
    Taylor’s formula with remainder applied to the function $t \mapsto v(tz)$ (with $z \in \mathbb{R}^d$) gives
    \begin{equation*}
        v(z) = v(0) + z \cdot \nabla v(0) + \dots + \frac{1}{N!} (z \cdot \nabla)^N v(0)+ \frac{1}{(N+1)!} (z \cdot \nabla)^{N+1} v(sz),
    \end{equation*}
    with $0 \leq s \leq 1$. By choice of $v$, all summands but the last one vanish on the right-hand side. Furthermore, the Fourier series of $(z \cdot \nabla)^{N+1} v(x)$ have coefficients $(z\cdot k)^{N+1} \widehat{v}_k$. This leads to the bound
    \begin{equation*}
        \| v \|_{L^\infty(B_r)} \lesssim \frac{1}{(N+1)!} \sup_{z \in B_r} \sum_{k \in \Lambda - k_0} |z\cdot k|^{N+1} | \widehat{v}_k | \leq  \frac{(r\diam \Lambda)^{N+1}}{(N+1)!} \|\widehat{v}\|_{\ell^1(\Lambda-k_0)}.
    \end{equation*}
    
    By Cauchy's inequality, there holds
    \begin{equation*}
        \|\widehat{v}\|_{\ell^1(\Lambda-k_0)}^2
        = \|\widehat{u}\|_{\ell^1(\Lambda)}^2
        \le \# \Lambda \|\widehat{u}\|_{\ell^2(\Lambda)}
        = \# \Lambda \|\widehat{u}\|_{\ell^2}.
    \end{equation*}
    Therefore,
    \begin{equation*}
        \|u\|_{L^\infty(B_r)}
        \le \frac{(r\diam \Lambda)^{N+1}}{(N+1)!} \|\widehat{u}\|_{\ell^1(\Lambda)}
        \le \frac{(r\diam \Lambda)^{N+1}}{(N+1)!}  (\# \Lambda)^{\frac{1}{2}} \|\widehat{u}\|_{\ell^2}.
    \end{equation*}
    Stirling's approximation formula for factorials gives
    \begin{equation*}
        \|u\|_{L^\infty(B_r)}
        \lesssim \frac{(\# \Lambda)^{\frac{1}{2}}}{\sqrt{N+1}} \Bigl(\frac{e\cdot r\diam \Lambda}{N+1}\Bigr)^{N+1} \|\widehat{u}\|_{\ell^2}.
    \end{equation*}
    By Parseval's identity: $\int_{\mathbb{T}^d} |u(x)|^2 \dd x \sim_d \|\widehat{u}\|_{\ell^2}^2$.
    We thus conclude with
    \begin{equation*}
        m_d(\lambda,r) \lesssim_d \frac{\|u\|_{L^\infty(B_r)}}{\|\widehat{u}\|_{\ell^2}}.
        \qedhere
    \end{equation*}
\end{proof}

To minimize the right hand side of \eqref{eq::upperbound-via-vanishing}, we need to maximize $\Gamma_d(\Lambda)$ among all $\Lambda \subset \mathcal{S}_d(\lambda)$ with $\diam \Lambda \le 2R$ for any fixed $R>0$.
By Theorem~\ref{thm::vanishing}, we have
\begin{equation*}
    \Gamma_d(\Lambda) \gtrsim (\# \Lambda)^{\frac{1}{d-1}}.
\end{equation*}
Therefore, a good approximation of this problem is the maximization of $\# \Lambda$ among all subsets of bounded diameters.
This leads to the definition of the quantities
\begin{equation*}
    \mathcal{A}_d(\lambda,R) = \frac{\mathcal{N}_d(\lambda,R)^{\frac{1}{d-1}}}{R},
    \qquad
    \mathcal{N}_d(\lambda,R) = \max\{\# \Lambda \mid \Lambda \subset \mathcal{S}_d(\lambda),\; \diam \Lambda \le 2R\}.
\end{equation*}
In other words, $\mathcal{N}_d(\lambda,R)$ is the maximum possible number of points one can find in a subset of $\mathcal{S}_d(\lambda)$ of diameter $\le 2R$, and $\mathcal{A}_d(\lambda,R)^{d-1}$ heuristically describes, up to some multiplicative constants, the maximum density of integer points in a spherical cap of radius $\sim R$. 
Clearly when $R = \pi \lambda$ we have $\mathcal{N}_d(\lambda,R) = \mathcal{N}_d(\lambda)$; in this case we will also denote for simplicity $\mathcal{A}_d(\lambda,R) = \mathcal{A}_d(\lambda)$.

\begin{cor}
    \label{cor::upperbound-high-dimension}
    If $d\ge 2$ and $r \ll_d \mathcal{A}_d(\lambda,R)$ for some $R \in (0,\pi\lambda]$, then
    \begin{equation}
    \label{eq::upperbound-compare-density}
        m_d(\lambda,r) \lesssim \Bigl( C_d r \mathcal{A}_d(\lambda,R)^{-1} \Bigr)^{C_d \mathcal{N}_d(\lambda,R)^{\frac{1}{d-1}}}.
    \end{equation}

    In particular, if $d \ge 5$ and $r \ll_d \lambda^{-\frac{1}{d-1}}$, then
    \begin{equation*}
        m_d(\lambda,r) \lesssim_d \bigl( C'_d r \lambda^{\frac{1}{d-1}} \bigr)^{C_d \lambda^{\frac{d-2}{d-1}}}.
    \end{equation*}
\end{cor}
\begin{proof}
    For $R\in(0,\pi\lambda]$ choose $\Lambda \subset \mathcal{S}_d(\lambda)$ with $\diam \Lambda \le 2R$  and $\# \Lambda = \mathcal{N}_d(\lambda,R)$.
    If $r \ll_d \mathcal{A}_d(\lambda,R)$, then $r\diam \Lambda \ll_d \Gamma_d(\Lambda)$.
    Therefore, we apply \eqref{eq::upperbound-via-vanishing} and Theorem~\ref{thm::vanishing-general} for
    \begin{equation*}
        m_d(\lambda,r) \lesssim (\# \Lambda)^{\frac{d-2}{2(d-1)}} \Bigl( 2e R\Bigl(C_d (\# \Lambda)^{\frac{1}{d-1}}\Bigr)^{-1}\Bigr)^{C_d (\# \Lambda)^{\frac{1}{d-1}}}
    \end{equation*}
    To prove \eqref{eq::upperbound-compare-density}, it remains to absorb the power of $\# \Lambda$ by its exponential and invoke the definition of $\mathcal{A}_d(\lambda,R)$.
    To prove the particular cases where $d\ge 5$, it suffices to plug in $R = \pi \lambda$ and use
    \begin{equation*}
        \mathcal{A}_d(\lambda) \sim_d \frac{\lambda^{\frac{d-2}{d-1}}}{\lambda} = \lambda^{-\frac{1}{d-1}}.
        \qedhere
    \end{equation*}
\end{proof}

\begin{rem}
\label{rem::main-thm-vs-equidist}
    By \cite[App.~A]{BourgainRudnick} and \cite[Cor.~1.9]{Sardari2019optimal}, if $d \ge 5$ and $R \ge \lambda^{\frac{1}{2}+\epsilon}$, then
    \begin{equation*}
        \mathcal{A}_d(\lambda,R) 
        \sim_d \frac{1}{R} \Bigl(\frac{R^{d-1}}{\lambda}\Bigr)^{\frac{1}{d-1}} 
        \sim_d \lambda^{-\frac{1}{d-1}}.
    \end{equation*}
    Therefore, when $d \ge 5$, to obtain estimates when $r \gtrsim_d \lambda^{-\frac{1}{d-1}}$, one needs to consider $R$ for which $\mathcal{A}_d(\lambda,R) \gg \mathcal{A}_d(\lambda)$ (so that the equidistribution $\mathcal{A}_d(\lambda,R) \sim \mathcal{A}_d(\lambda)$ is violated).
    As observed above, this may only happen when $R$ is very small (at least smaller than $\lambda^{-\frac{1}{2}}$).
    
    Similar argument can be made when $d =3,4$ as well; in these cases explicit equidistribution results on small areas of spheres have been obtained by \cite{Sardari2019optimal} (for $d=4$) and \cite{Duke-SP1990representation,Humphries-R2022optimal} (for $d=3$).
\end{rem}

\begin{rem}
    \label{rem::low-dim-lattice-distri}
    Typically the violation of the equidistribution is a consequence of lower dimensional distribution of lattice points, as already remarked in \cite[Prop.~1.4]{BourgainRudnick}.
    In \S\ref{sec::example-degenerate} we will study examples related to this phenomena.
\end{rem}

\subsection{Alternative proof} 

We provide here a slightly different proof of Theorem~\ref{thm::upperbound-via-vanishing}.
For simplicity we shall only deal with the case $\Lambda = \mathcal{S}_d(\lambda)$, but the analysis applies for general situations.

Instead of arguing as in the previous subsection by first constructing functions with maximal orders of pointwise vanishing and then estimating their local $L^2$-norms through Taylor expansions, we consider directly the quadratic form associated with the local $L^2$-norm.
Namely, for $\chi \in C_0^\infty(\mathbb{R}^d)$ write
\begin{align*}
    \frac{1}{r^d} \int &\chi \Bigl( \frac{x}{r} \Bigr) |u(x)|^2 \dd x 
    = \sum_{k,\ell \in \mathcal{S}_d(\lambda)} \widehat{\chi} \bigl(r (k-\ell)\bigr) \widehat{u}_k \overline{\widehat{u}_\ell} \\
    & = \sum_{|\alpha|\le N} \frac{(ir)^{|\alpha|} J_\alpha}{\alpha!}  \sum_{k,\ell \in \mathcal{S}_d(\lambda)} (k-\ell)^\alpha \widehat{u}_k \overline{\widehat{u}_\ell} 
    + \frac{\mathcal{O} (C_d r)^{N+1}}{(N+1)!} \sum_{k,\ell \in \mathcal{S}_d(\lambda)} |k-\ell|^{N+1} |\widehat{u}_k \overline{\widehat{u}_\ell}|,
\end{align*}
where 
\begin{equation*}
    J_\alpha = \int \chi(x) x^\alpha \dd x.
\end{equation*}
By Theorem~\ref{thm::vanishing}, when
\begin{equation*}
    \binom{N+d-1}{d-1} + \binom{N+d-2}{d-1} < \mathcal{N}_d(\lambda),
\end{equation*}
there exists $u \in \mathcal{E}_d(\lambda) \backslash \{0\}$ such that \eqref{eq::hirondelle} holds.
Therefore,
\begin{equation*}
    (\widehat{u}_k)_{k \in \mathcal{S}_d(\lambda)} \in \bigcap_{|\alpha|\le N} \ker A_\alpha,
\end{equation*}
where $A_\alpha = \bigl( (k-\ell)^\alpha \bigr)_{k,\ell \in \mathcal{S}_d(\lambda)}$.
Consequently, for this $u$,
\begin{equation*}
    \frac{1}{r^d} \int \chi \Bigl( \frac{x}{r} \Bigr) |u(x)|^2 \dd x 
    \lesssim \frac{(C_d r \lambda)^{N+1}}{(N+1)!} \sum_{k,\ell \in \mathcal{S}_d(\lambda)} |\widehat{u}_k \overline{\widehat{u}_\ell}|
    \lesssim \frac{(C_d r \lambda)^{N+1}}{(N+1)!} \mathcal{N}_d(\lambda) \|u\|_{L^2}^2.
\end{equation*}
It remains to apply Stirling's approximation as in the proof of Theorem~\ref{thm::upperbound-via-vanishing}.

\subsection{Examples with lower dimensional Fourier support}
\label{sec::example-degenerate}

We construct examples whose Fourier transform is supported on the intersection of the sphere and a hyperplane. This class of examples seems optimal in some range of the parameters $r$ and $\lambda$.

\begin{rem}
    In view of \eqref{eq::upperbound-via-vanishing}, the goal is to maximize the possible order of vanishing for eigenfunctions whose Fourier supports lie in lattice sets with prescribed diameter bounds.
    As noted in Remarks~\ref{rem::main-thm-vs-equidist} and~\ref{rem::low-dim-lattice-distri}, this requires breaking the equidistribution of lattice points, which typically occurs when the Fourier support is confined to lower-dimensional affine subspaces.
    To achieve the maximal vanishing order, one must maximize the dimension of such subspaces, and hence restrict attention to hyperplanes.
\end{rem}

\begin{thm}
    \label{thm::QL-upperbound-high-dim}
    For $d\ge 4$ and $r \in (0,1)$ we have
    \begin{equation*}
        m_d(r) \lesssim \exp\bigl\{-C_d r^{3-d} \bigr\}.
    \end{equation*}
\end{thm}

\begin{proof}
    Applying \eqref{eq::upperbound-via-vanishing} on $\mathbb{T}^{d-1}$ with $\Lambda = \mathcal{S}_{d-1}(R)$ where $R>0$ is to be found, we obtain an eigenfunction $u' \in \mathcal{E}_{d-1}(R)$ on $\mathbb{T}^{d-1}$ with $\|u'\|_{L^2(\mathbb{T}^{d-1})} = 1$, such that
    \begin{equation*}
        \fint_{B_r^{d-1}} |u'(x')|^2 \dd x'
        \lesssim \bigl( C_d r R^{\frac{1}{d-2}}\bigr)^{C_d R^{\frac{d-3}{d-2}}},
    \end{equation*}
    where we denote by $B_r^j$ the $r$-ball in $\mathbb{R}^j$ to avoid ambiguity.
    The right hand side is a function of $R$ and attains its minimum when $r R^{\frac{1}{d-2}} \sim_d 1$.
    
    We claim that, for all $K \gg 1$, there exists $R\sim_d (Kr)^{2-d}$ with $\mathcal{N}_{d-1}(R) \gtrsim R^{d-3}$.
    Indeed, let $\varrho = (Kr)^{2-d}$ and note that $\mathcal{E}_{d-1}(R) \ne \emptyset$ whenever $R^2 \in \mathbb{N}$.
    By the area estimate,
    \begin{equation*}
        \max_{R \in (\varrho,2\varrho)} \mathcal{N}_{d-1}(R)
        \ge \frac{\#\{k \in \mathbb{Z}^{d-1} : \varrho < |k| < 2\varrho\}}{\#\{R \in (\varrho,2\varrho):\mathcal{E}_{d-1}(R)\ne\emptyset\}}
        \sim \frac{\varrho^{d-1}}{\varrho^2} = \varrho^{d-3},
    \end{equation*}
    implying the existence of such $R \in (\varrho,2\varrho)$.
    Plugging in such $R$ yields
    \begin{equation*}
        \fint_{B_r^{d-1}} |u'(x')|^2 \dd x'
        \lesssim \exp\bigl\{-C'_d r^{3-d}\bigr\}.
    \end{equation*}
    Finally, let $\lambda_n = \sqrt{R^2 + n^2}$ and construct $u_n \in \mathcal{E}_d(\lambda_n)$ by setting
    \begin{equation*}
        u_n(x) = e^{inx_d} u'(x').
    \end{equation*}
    Then $\|u_n\|_{L^2} = 1$, and
    \begin{equation*}
        m_d(\lambda_n,r) 
        \le \fint_{B_r^d} |u_n(x)|^2 \dd x
        \lesssim \fint_{B_r^{d-1}} |u'(x')|^2 \dd x'
        \lesssim \exp\bigl\{-C'_d r^{3-d}\bigr\}.
    \end{equation*}
    We conclude by passing $n\to\infty$ using \eqref{eq::QL-relation}.
\end{proof}

The construction when $d=3$ requires more involved number theoretical results.

\begin{thm}
    \label{thm::QL-upperbound-3D}
    For $d=3$ and $r \in (0,1)$, we have
    \begin{equation*}
        m_3(r) \lesssim \exp\Bigl\{- \exp\Bigl(\frac{C\ln(1/r)}{\ln\ln(1/r)}\Bigr) \Bigr\}.
    \end{equation*}
\end{thm}
\begin{proof}
    Our proof is inspired by Wigert's estimate of the divisor function \cite{Wigert1907ordre}.
    Let $\mathcal{G}$ be the set of all prime numbers which are $\equiv 1 \pmod{4}$.
    For all $m \in \mathbb{N}$, let 
    \begin{equation*}
        \mathcal{G}_m = \mathcal{G} \cap [0,m],
        \quad
        P_m = \prod_{p \in \mathcal{G}_m} p,
        \quad
        \varpi_m = \# \mathcal{G}_m.
    \end{equation*}
    Dirichlet's theorem on arithmetic progressions states that $\varpi_m \sim \frac{m}{\ln m}$.
    
    From now on, fix $m \ge 0$ to be such that
    \begin{equation*}
        \sqrt{P_{m-1}} \le \frac{1}{r} \le \sqrt{P_m}.
    \end{equation*}
    By \cite{McCurley-Kevin1984prime} (see also \cite{BMOR2018primes} for finer estimates), we have
    \begin{equation*}
        m \sim \ln P_m \sim \ln\Bigl( \frac{1}{r} \Bigr).
    \end{equation*}
    Therefore,
    \begin{equation*}
        \frac{1}{r} \le \sqrt{P_m} \le \frac{1}{r} \frac{\sqrt{P_m}}{\sqrt{P_{m-1}}} \le \frac{\sqrt{m}}{r} \lesssim \frac{\sqrt{\ln(1/r)}}{r}.
    \end{equation*}
    Next, due to the sum of two squares theorem by Jacobi \cite{Jacobi1829fundamenta}, we have
    \begin{equation*}
        \mathcal{N}_2(\lambda) = 4(d_1(\lambda^2)-d_3(\lambda^2))
    \end{equation*}
    for all $\lambda\ge 0$, where $d_j(n) = \#\{k\in \mathbb{N}: k|n, k\equiv j \pmod{4}\}$.
    By our construction $d_3(P_m)=0$, therefore
    \begin{equation*}
        \mathcal{N}_2(\sqrt{P_m}) = 4 d_1(P_m) = 4 \cdot 2^{\varpi_n}
        = e^{\frac{\gamma_m m}{\ln m}}, \quad \gamma_m \sim 1.
    \end{equation*}

    By Theorems~\ref{thm::upperbound-via-vanishing} and~\ref{thm::vanishing-order-upperbound}, there exists $u' \in \mathcal{E}_2(\sqrt{P_m})$ with $\|u'\|_{L^2(\mathbb{T}^2)} = 1$, such that
    \begin{align*}
        \fint_{B_r^2} |u'(x')|^2 \dd x'
        & \lesssim \Bigl( \frac{C r\sqrt{P_m}}{\mathcal{N}_2(\sqrt{P_m})} \Bigr)^{C \mathcal{N}_2(\sqrt{P_m})}
        \lesssim \Bigl( C e^{-\frac{\gamma_m m}{\ln m}} \Bigr)^{C\exp\{\frac{\gamma_m m}{\ln m}\}} \\
        & = \exp\biggl\{C'\biggl( \ln M - \frac{\gamma_m \ln m}{\ln \ln m}\biggr) \exp\biggl\{\frac{\gamma_m \ln m}{\ln\ln m}\biggr\} \biggr\} \\
        & \le \exp\biggl\{- \exp\biggl\{\frac{C''\ln(1/r)}{\ln\ln(1/r)}\biggr\} \biggr\}.
    \end{align*}
    Let $\lambda_n = \sqrt{P_m + n^2}$ and construct $u_n \in \mathcal{E}_3(\lambda_n)$ by setting
    \begin{equation*}
        u_n(x) = e^{inx_3} u'(x_1,x_2),
    \end{equation*}
    then $\|u_n\|_{L^2(\mathbb{T}^3)} = 1$, and
    \begin{align*}
        m_3(\lambda_n,r)
        & \le \fint_{B_r^3} |u_n(x)|^2 \dd x
        \lesssim \fint_{B_r^2} |u'(x')|^2 \dd x'
        \lesssim \exp\biggl\{- \exp\biggl\{\frac{C''\ln(1/r)}{\ln\ln(1/r)}\biggr\} \biggr\}.
    \end{align*}
    We conclude by passing $n\to\infty$ and using \eqref{eq::QL-relation}.
\end{proof}

\section{Lower bounds}
\label{sec::lowerbounds}

By Lemmas~\ref{lem::nazarov} and~\ref{lem::FM} (Nazarov--Turán lemmas), the following lower bound holds
\begin{thm}
    \label{thm::m-lower}
    For all $d,\lambda,r$, there holds
    \begin{equation}
    \label{eq::lower-bound-general-by-Nazarov}
        m_{d}(\lambda,r) \ge (C_d r)^{2d\min\{\mathcal{N}_d(\lambda)-1,\lambda\}}.
    \end{equation}
\end{thm}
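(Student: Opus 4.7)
The plan is to apply directly the two Nazarov--Turán type inequalities recorded in \S\ref{sec::nazarov}, namely Lemmas \ref{lem::nazarov} and \ref{lem::FM}, to a nonzero $u \in \mathcal{E}_d(\lambda)$ and to take the better of the two resulting lower bounds. The substantive content sits in those two lemmas; the deduction of Theorem \ref{thm::m-lower} from them is bookkeeping, the only subtle point being the choice of which geometric parameter of the frequency set $\mathcal{S}_d(\lambda)$ to feed into each inequality.

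First I would apply Lemma \ref{lem::nazarov}, whose exponent scales with the \emph{cardinality} of the frequency support. Since $\sharp \mathcal{S}_d(\lambda) = \mathcal{N}_d(\lambda)$, this produces an estimate of the form
\begin{equation*}
    \fint_{B_r} |u|^2 \gtrsim (C_d r)^{2d(\mathcal{N}_d(\lambda)-1)} \fint_{\mathbb{T}^d} |u|^2,
\end{equation*}
which is the $\mathcal{N}_d(\lambda)-1$ branch of the minimum; this branch is the advantageous one in low dimensions $d=2,3,4$ along the sporadic subsequences of $\lambda$ for which $\mathcal{N}_d(\lambda)$ is small. Next I would apply Lemma \ref{lem::FM} (the Fontes--Merz inequality), whose exponent scales instead with the \emph{axis-aligned width} of the frequency support. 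Since $\mathcal{S}_d(\lambda) \subset [-\lambda,\lambda]^d$, iterating a one-dimensional Nazarov estimate in each of the $d$ coordinate directions, exactly as in the proof of \eqref{eq::lower-bound-infty-nazarov+FM}, yields
\begin{equation*}
    \fint_{B_r} |u|^2 \gtrsim (C_d r)^{2d\lambda} \fint_{\mathbb{T}^d} |u|^2,
\end{equation*}
the $\lambda$ branch of the minimum, dominant when $d \ge 5$ and $\mathcal{N}_d(\lambda) \sim_d \lambda^{d-2} \gg \lambda$.

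Taking whichever of the two lower bounds is larger and passing to the infimum over $u$ gives \eqref{eq::lower-bound-general-by-Nazarov}. The real obstacles are internal to \S\ref{sec::nazarov}: Lemma \ref{lem::nazarov} rests on the classical Turán lemma for exponential sums and its multi-dimensional variants, where the main difficulty is to track how the exponent depends on $\sharp \mathcal{S}_d(\lambda)$ rather than only on the diameter; Lemma \ref{lem::FM} requires a sharp one-dimensional Nazarov estimate combined with the coordinate-by-coordinate iteration, noting that each projection of $\mathcal{S}_d(\lambda)$ onto a coordinate axis contains at most $2\lambda+1$ distinct values so that each of the $d$ iterations costs a factor $(C_d r)^{2\lambda}$ in the exponent.
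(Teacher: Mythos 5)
Your proposal is correct and is essentially identical to the paper's argument: the paper proves Theorem~\ref{thm::m-lower} precisely by invoking Lemma~\ref{lem::nazarov} (via \eqref{eq::nazarov-p} with $p=2$, exponent governed by $\sharp\mathcal{S}_d(\lambda)-1=\mathcal{N}_d(\lambda)-1$) and Lemma~\ref{lem::FM} (via \eqref{eq::FM-finer-ineq}, exponent governed by the projections of $\mathcal{S}_d(\lambda)$ onto the axes, each of size at most $2\lambda+1$), and taking the better of the two bounds. Your reading of which geometric parameter each lemma exploits, and which branch of the minimum dominates in which dimension, matches the paper exactly.
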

In the following we provide some improvements of this general lower bounds when $r$ is not too small. 
We follow the footsteps of Bourgain--Rudnick \cite{BourgainRudnick}, using cluster structures of integer points on spheres.
Related results are recalled in \S\ref{sec::cluster}.
The following lemma reduces the estimate of general eigenfunctions to those with localized Fourier coefficients.

\begin{df}
    \label{def::graph}
    For $d \ge 2$, $\varrho>0$ and $V\subset \mathbb{R}^d$, denote by $G(V,\varrho)$ the graph formed by connecting point pairs $V$ with distances $< \varrho$.
    The disjoint ensemble of vertex sets of the connected components of $G(\mathcal{S}_d(\lambda),\varrho)$ is denoted by $\Omega = \{\Omega_\alpha\}_\alpha$.
    Clearly $\Omega$ gives a partition of $V$, i.e., $V = \bigsqcup_\alpha \Omega_\alpha$.
\end{df}

\begin{lem}
    \label{lem::decomposition}
    Let $V \subset \mathbb{Z}^d$ be a finite set and let $\Omega = (\Omega_\alpha)_\alpha$ be the partition of $V$ given by $G(V,\varrho)$ where $\varrho > 0$, as according to Definition~\ref{def::graph}.
    For all $\alpha$, denote by $\Pi_\alpha$ the Fourier projection onto $\Omega_\alpha$.
    Then, for all $u \in C^\infty(\mathbb{T}^d)$ with Fourier coefficients supported on $V$ and all $\chi \in C_c^\infty(\mathbb{R}^d)$, there holds
    \begin{equation}
        \label{eq::decomposition}
        \biggl| \frac{1}{r^d} \int \chi\Bigl(\frac{x}{r}\Bigr) |u(x)|^2 \dd x
        - \sum_{\alpha} \frac{1}{r^d} \int \chi\Bigl(\frac{x}{r}\Bigr) |\Pi_\alpha u(x)|^2 \dd x \biggr| \le \frac{1}{2} \# V \sup_{|\xi|\ge r\varrho} |\widehat{\chi}(\xi)| \cdot\|u\|_{L^2}^2.
    \end{equation}
\end{lem}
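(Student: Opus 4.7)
The strategy is a standard almost-orthogonality argument. Expanding $u=\sum_\alpha \Pi_\alpha u$ gives
\[ |u(x)|^2 = \sum_\alpha |\Pi_\alpha u(x)|^2 + \sum_{\alpha\ne\beta} \Pi_\alpha u(x)\,\overline{\Pi_\beta u(x)}, \]
so the quantity inside the absolute value on the left-hand side of \eqref{eq::decomposition} is precisely the integral of the off-diagonal double sum against $r^{-d}\chi(x/r)$. The plan is to bound this off-diagonal contribution by exploiting that vertices in different components of $G(V,\varrho)$ are $\varrho$-separated in frequency.

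For a fixed pair $\alpha\ne\beta$ I would expand each factor in Fourier series,
\[ \Pi_\alpha u(x)\,\overline{\Pi_\beta u(x)} = (2\pi)^{-d}\sum_{k\in\Omega_\alpha}\sum_{\ell\in\Omega_\beta}\widehat{u}_k\overline{\widehat{u}_\ell}\,e^{i(k-\ell)\cdot x}, \]
and perform the change of variable $y=x/r$ in the resulting integrals; then $r^{-d}\int\chi(x/r)e^{i(k-\ell)\cdot x}\,\dd x$ becomes an oscillatory integral of $\chi$ which, up to a $2\pi$ normalisation and a sign, equals $\widehat{\chi}(r(k-\ell))$. The decisive observation is structural: if $k\in\Omega_\alpha$ and $\ell\in\Omega_\beta$ with $\alpha\ne\beta$, then $k$ and $\ell$ sit in distinct connected components of $G(V,\varrho)$ and therefore cannot be adjacent, which by Definition~\ref{def::graph} forces $|k-\ell|\ge\varrho$. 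Consequently $|r(k-\ell)|\ge r\varrho$, and each Fourier factor is bounded in modulus by $M_0:=\sup_{|\xi|\ge r\varrho}|\widehat{\chi}(\xi)|$.

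Summing pointwise and applying the triangle inequality then yields an off-diagonal bound of the form $M_0\sum_{\alpha\ne\beta}\sum_{k\in\Omega_\alpha,\,\ell\in\Omega_\beta}|\widehat{u}_k||\widehat{u}_\ell|$. Extending the double sum to all ordered pairs in $V$ and applying Cauchy--Schwarz together with Parseval gives
\[ \biggl(\sum_{k\in V}|\widehat{u}_k|\biggr)^{\!2}\le \sharp V\cdot\|\widehat{u}\|_{\ell^2(V)}^{2} = \sharp V\cdot\|u\|_{L^2}^{2}, \]
which is exactly the shape of the right-hand side of \eqref{eq::decomposition}; the factor $\tfrac12$ reflects that ordered pairs $(\alpha,\beta)$ and $(\beta,\alpha)$ contribute conjugate amounts that can be grouped by Hermitian symmetry. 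The whole argument is essentially bookkeeping: the only genuinely new input, namely the $\varrho$-separation of frequencies sitting in distinct components, is hard-coded into Definition~\ref{def::graph}, and I anticipate no real obstacle beyond carefully tracking the $(2\pi)$ constants to recover the stated inequality.
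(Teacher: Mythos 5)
Your argument is correct and is essentially the paper's own proof: expand $|u|^2$ into diagonal plus cross terms, observe that $k\in\Omega_\alpha$, $\ell\in\Omega_\beta$ with $\alpha\ne\beta$ forces $|k-\ell|\ge\varrho$ so each factor $\widehat{\chi}(r(k-\ell))$ is bounded by $\sup_{|\xi|\ge r\varrho}|\widehat{\chi}(\xi)|$, and then close with Cauchy--Schwarz and Parseval (the paper instead uses $|\widehat{u}_k||\widehat{u}_\ell|\le\tfrac12(|\widehat{u}_k|^2+|\widehat{u}_\ell|^2)$, which gives the same bound). One small caveat: neither your Hermitian-symmetry remark nor the paper's own computation genuinely recovers the prefactor $\tfrac12$ --- both arguments honestly yield $\sharp V\,\sup_{|\xi|\ge r\varrho}|\widehat{\chi}(\xi)|\,\|u\|_{L^2}^2$ --- but this constant is immaterial wherever the lemma is applied.
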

\begin{proof}
    Using the rapid decay of $\widehat{\chi}$, we bound the sum of cross terms as follows:
    \begin{align*}
        \sum_{\alpha\ne\beta} \frac{1}{r^d} \biggl| \int \chi\Bigl(\frac{x}{r}\Bigr) \Pi_\alpha u(x) \overline{\Pi_\beta u(x)} \dd x \biggr|
        & \le \sum_{k,\ell \in V} \bm{1}_{|k-\ell|>\varrho} \bigl|\widehat{\chi}\bigl(r(k-\ell)\bigr)\bigr| \cdot |\widehat{u}(k)| \cdot |\overline{\widehat{u}(\ell)}| \\
        & \le  \frac{1}{2} \sum_{k,\ell \in V} 
        \sup_{|\xi|\ge r\varrho} |\widehat{\chi}(\xi)| \bigl( |\widehat{u}(k)|^2 + |\overline{\widehat{u}(\ell)}|^2 \bigr) \\
        & = \frac{1}{2} \# V \sup_{|\xi|\ge r\varrho} |\widehat{\chi}(\xi)| \cdot\|u\|_{L^2}^2.
        \qedhere
    \end{align*}
\end{proof}

\subsection{The case of dimension 2}
\label{sec::lower-bounds-2D}

Let us recall the lemma by Jarník \cite{Jarnik1926} on the distribution of integer points on circles, which is stated in Lemma~\ref{lem::jarnik}.

\begin{thm}
    \label{2dthm}
    If $r > \lambda^{-\frac{1}{3}+\epsilon}$ where $\epsilon \in (0,\frac{1}{3})$, then
    \begin{equation*}
        m_2(\lambda,r) \gtrsim_\epsilon r^2.
    \end{equation*}
    Consequently, there holds
    \begin{equation*}
        m_2(r) \gtrsim r^2.
    \end{equation*}
\end{thm}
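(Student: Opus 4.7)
The plan is to partition the Fourier support $\mathcal{S}_2(\lambda)$ into clusters of size at most $2$ using Jarník's lemma, and then reduce the lower bound on $\fint_{B_r}|u|^2$ to a $2\times 2$ positive semi-definite quadratic form analysis on each cluster. I would set $\varrho = c\lambda^{1/3}$ with $c$ a small universal constant. Jarník's Lemma~\ref{lem::jarnik} states that any three points in $\mathcal{S}_2(\lambda)$ admit a pairwise distance $\gtrsim \lambda^{1/3}$. Any connected cluster of three or more vertices in $G(\mathcal{S}_2(\lambda), \varrho)$ (Definition~\ref{def::graph}) contains three vertices whose pairwise distances are all $\le 2\varrho$; taking $c$ small enough, this violates Jarník. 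Hence every component $\Omega_\alpha$ has at most $2$ vertices.

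Next, I would pick an even, nonnegative $\chi \in C_c^\infty(\mathbb{R}^2)$ with $\chi \le \bm{1}_{B_1}$ and $\chi \gtrsim 1$ on $B_{1/2}$, and apply Lemma~\ref{lem::decomposition} to obtain
\begin{equation*}
    \frac{1}{r^2} \int \chi\bigl(\tfrac{x}{r}\bigr) |u(x)|^2 \dd x = S + E,\qquad S = \sum_\alpha \frac{1}{r^2}\int \chi\bigl(\tfrac{x}{r}\bigr) |\Pi_\alpha u(x)|^2 \dd x,
\end{equation*}
with $|E| \le \tfrac{1}{2}\mathcal{N}_2(\lambda)\sup_{|\xi| \ge r\varrho}|\widehat\chi(\xi)|\,\|u\|_{L^2}^2$. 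Each summand in $S$ is a quadratic form in $(\widehat{u}_k)_{k \in \Omega_\alpha}$ whose matrix is $(A_0)$ when $\sharp\Omega_\alpha=1$, or
\begin{equation*}
    \begin{pmatrix} A_0 & A_\xi \\ \overline{A_\xi} & A_0 \end{pmatrix},\quad A_0 = \int \chi > 0,\quad A_\xi = \int \chi(y) e^{ir\xi\cdot y}\dd y,
\end{equation*}
when $\Omega_\alpha = \{k_1,k_2\}$ with $\xi = k_1 - k_2 \in \mathbb{Z}^2\setminus\{0\}$. The smallest eigenvalue of the latter is $A_0 - |A_\xi| = (A_0^2 - |A_\xi|^2)/(A_0 + |A_\xi|)$; expanding $A_0^2 - |A_\xi|^2 = \iint \chi(y)\chi(z)\bigl(1-\cos(r\xi\cdot(y-z))\bigr)\dd y\,\dd z$ and using $|\xi| \ge 1$, this is $\gtrsim r^2|\xi|^2 \ge r^2$ when $r|\xi| \lesssim 1$, and $\gtrsim 1$ when $r|\xi| \gtrsim 1$ thanks to the rapid decay of $\widehat\chi$. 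Summing over clusters, $S \gtrsim r^2\|u\|_{L^2}^2$.

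For the error term, since $\widehat\chi$ is Schwartz, $\sup_{|\xi|\ge r\varrho}|\widehat\chi(\xi)| \lesssim_N (r\varrho)^{-N}$ for any $N$. Combined with $r\varrho \gtrsim \lambda^\epsilon$ (from $r > \lambda^{-\frac{1}{3}+\epsilon}$) and $\mathcal{N}_2(\lambda) \lesssim_\delta \lambda^\delta$, one obtains $|E| \lesssim_{N,\delta} \lambda^{\delta - N\epsilon}\|u\|_{L^2}^2$, which is $\ll r^2\|u\|_{L^2}^2 \asymp \lambda^{-\frac{2}{3}+2\epsilon}\|u\|_{L^2}^2$ once $N$ is chosen large enough depending on $\epsilon$. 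Therefore $\fint_{B_r}|u|^2 \ge |B_r|^{-1}\int \chi(x/r)|u|^2 \dd x \gtrsim_\epsilon r^2 \fint_{\mathbb{T}^2}|u|^2$, giving $m_2(\lambda,r)\gtrsim_\epsilon r^2$. The consequence $m_2(r) \gtrsim r^2$ then follows from $m_2(r) \ge \liminf_{\lambda\to\infty} m_2(\lambda,r)$ (see \S\ref{sec::QL-relation}), since for fixed $r \in (0,1)$ the hypothesis $r > \lambda^{-1/3+\epsilon}$ holds for $\lambda$ large.

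The main obstacle is the cluster-size control via Jarník: without it, one would face $k\times k$ Gram matrices whose smallest eigenvalues are much less transparent; once $\sharp\Omega_\alpha \le 2$ is granted, the only additional positivity input is the trivial separation $|k_1-k_2|\ge 1$ between distinct integer vectors, which is exactly what produces the matching exponent $r^2$.
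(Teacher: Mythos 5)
Your proposal is correct and follows essentially the same route as the paper: Jarník's lemma forces the components of $G(\mathcal{S}_2(\lambda),c\lambda^{1/3})$ to have at most two points, Lemma~\ref{lem::decomposition} reduces the problem to the diagonal cluster terms plus an error controlled by the decay of $\widehat{\chi}$, and the $2\times 2$ quadratic form on each doubleton has smallest eigenvalue $\gtrsim r^2$ because $|k-\ell|\ge 1$. Your treatment of the off-diagonal error term and of the eigenvalue lower bound via $A_0^2-|A_\xi|^2$ is in fact slightly more explicit than the paper's.
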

\begin{proof}
    By Lemma~\ref{lem::jarnik}, let $\Omega = (\Omega_\alpha)_\alpha$ be the partition given by $G(\mathcal{S}_2(\lambda),\frac{\sqrt{2}}{2}\lambda^{\frac{1}{3}})$, then $\# \Omega_\alpha \le 2$ for all $\alpha$.
    Indeed, if $\#\Omega_\alpha \ge 3$ for some $\alpha$, then we may find three distinct but adjacent integer points on an arc of length $<\sqrt{2}\lambda^{\frac{1}{3}}$, contradicting Lemma~\ref{lem::jarnik}.
    Let $\chi \in C_c^\infty(\mathbb{R}^2)$ be radial, satisfying $\widehat{\chi}(0)=1$, $\bm{1}_{B_r} \gtrsim \chi \gtrsim \bm{1}_{B_{r/2}}$, and apply Lemma~\ref{lem::decomposition}.
    For the estimate of $\Pi_\alpha u$, clearly the only difficulty cases are those $\alpha$ with $\# \Omega_\alpha = 2$.
    For such $\alpha$, write $\Omega_\alpha = \{k,\ell\}$. Then
    \begin{equation*}
        \frac{1}{r^2} \int \chi\Bigl(\frac{x}{r}\Bigr) |\Pi_\alpha u(x)|^2 \dd x
        = |\widehat{u}_k|^2 + |\widehat{u}_\ell|^2 + \widehat{\chi}\bigl(r(k-\ell)\bigr) \widehat{u}_k \overline{\widehat{u}_\ell} + \widehat{\chi}\bigl(r(\ell-k)\bigr) \widehat{u}_\ell \overline{\widehat{u}_k},
    \end{equation*}
    where $\chi \in C_c^\infty(\mathbb{R}^2)$ is radial with $\widehat{\chi}(0)=1$.
    The right-hand side is a quadratic form $Q$ in $(\widehat{u}_k,\widehat{u}_\ell)$.
    Therefore, matters reduce to estimating the smallest eigenvalue of a $2\times 2$ matrix; since $1 - \widehat{\chi}(r(k-\ell)) \gtrsim r^2$, we see that the smallest eigenvalue of $Q$ is $\gtrsim r^2$.
    Therefore
    \begin{equation*}
        \frac{1}{r^2} \int \chi\Bigl(\frac{x}{r}\Bigr) |\Pi_\alpha u(x)|^2 \dd x \gtrsim r^2 \|\Pi_\alpha u\|_{L^2}^2.
    \end{equation*}
    We conclude by summing over $\alpha$ and using orthogonality.
\end{proof}

To go beyond the two previous theorems, we will resort to the a result of Cilleruelo and Cordoba \cite{CillerueloCordoba} (Lemma~\ref{lem::CC}) which generalizes Jarník's result (Lemma~\ref{lem::jarnik}).

\begin{thm}
    \label{thm::2d-lowerbound-CC}
    If $m \ge 1$ and $r > \lambda^{- \frac{1}{2} + \delta(m) + \epsilon}$ where $\epsilon \in \bigl(0,\frac{1}{2}-\delta(m)\bigr)$, then
    \begin{equation*}
        m_2(\lambda,r) \gtrsim_{\epsilon,m} r^{4(m-1)}.
    \end{equation*}
\end{thm}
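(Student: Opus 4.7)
The plan is to adapt the proof of Theorem~\ref{2dthm}, replacing Jarník's lemma (which caps cluster sizes at $2$) with the Cilleruelo--Cordoba estimate (Lemma~\ref{lem::CC}), which allows clusters of up to $m$ integer points at the price of a smaller threshold.

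First I would apply Lemma~\ref{lem::CC} to obtain, for $\varrho = c_m\lambda^{\frac{1}{2}-\delta(m)-\epsilon/2}$ with $c_m$ a sufficiently small constant depending on $m$, that every connected component of $G(\mathcal{S}_2(\lambda),\varrho)$ contains at most $m$ vertices; this produces a partition $\mathcal{S}_2(\lambda)=\bigsqcup_\alpha\Omega_\alpha$ with $\sharp\Omega_\alpha\le m$. Then, exactly as in the proof of Theorem~\ref{2dthm}, I would choose a nonnegative radial $\chi\in C_c^\infty(\mathbb{R}^2)$ with $\widehat{\chi}(0)=1$ and $\mathbf{1}_{B_r}\gtrsim\chi\gtrsim\mathbf{1}_{B_{r/2}}$, and apply Lemma~\ref{lem::decomposition}. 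Since $r\varrho\gtrsim\lambda^{\epsilon/2}$ tends to infinity while $\widehat{\chi}$ is Schwartz, the cross-cluster remainder in \eqref{eq::decomposition} decays faster than any polynomial in $\lambda$ and is therefore harmless.

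The proof then reduces to the uniform per-cluster estimate
\begin{equation*}
    \frac{1}{r^2}\int\chi(x/r)|\Pi_\alpha u(x)|^2\dd x \gtrsim_m r^{4(m-1)}\|\Pi_\alpha u\|_{L^2(\mathbb{T}^2)}^2,
\end{equation*}
after which summation over $\alpha$ combined with the orthogonality of $\{\Pi_\alpha u\}_\alpha$ in $L^2(\mathbb{T}^2)$ yields the conclusion.

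The main obstacle is this per-cluster lower bound. Since $\Pi_\alpha u$ is an exponential polynomial with $p\le m$ frequencies and $r<1$, it suffices to prove the estimate with exponent $4(p-1)$ in place of $4(m-1)$. My plan is to mimic the iterated one-dimensional Nazarov--Turán scheme that underlies Lemma~\ref{lem::FM}: for each cluster, the projections of the frequencies in $\Omega_\alpha$ onto a coordinate axis take at most $p$ distinct values, so the one-dimensional Nazarov inequality contributes a factor $(Cr)^{2(p-1)}$ per direction; iterating over the two coordinate directions then yields the desired total loss $(Cr)^{4(p-1)}$. The delicate point will be transferring this $L^\infty$-flavoured iteration to the $\chi$-weighted $L^2$ quadratic form, which I expect to handle via a routine Cauchy--Schwarz comparison using the pointwise bound $\chi\sim\mathbf{1}_{B_r}$.
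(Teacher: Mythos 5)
Your proposal is correct and follows the paper's proof: the same Cilleruelo--Cordoba clustering of $\mathcal{S}_2(\lambda)$ via $G(\mathcal{S}_2(\lambda),\varrho)$ with $\sharp\Omega_\alpha\le m$, the same application of Lemma~\ref{lem::decomposition} with $r\varrho\gtrsim\lambda^{\epsilon/2}$ making the cross terms negligible against the polynomially small main term, and the same summation by orthogonality. The per-cluster bound you flag as the main obstacle is handled in the paper by a one-line application of the $L^p$ Nazarov--Tur\'an inequality \eqref{eq::nazarov-p} with $d=2$, $E=B_{r/2}$, $|E|\sim r^2$ and $n\le m-1$, which gives $\fint_{B_{r/2}}|\Pi_\alpha u|^2\gtrsim (Cr^2)^{2(m-1)}\|\Pi_\alpha u\|_{L^\infty}^2\ge (Cr)^{4(m-1)}\|\Pi_\alpha u\|_{L^2}^2$; your iterated one-dimensional scheme is essentially \eqref{eq::FM-finer-ineq} applied to the product of the coordinate projections of $\Omega_\alpha$ and yields the same exponent, so either route closes the argument.
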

\begin{proof}
    By Lemma~\ref{lem::jarnik-connes}, let $\Omega = (\Omega_\alpha)_\alpha$ be the partition given by $G(\mathcal{S}_2(\lambda),\frac{\sqrt{2}}{m}\lambda^{\frac{1}{2}-\delta(m)})$, then $\# \Omega_\alpha \le m$ for all $\alpha$.
    Indeed, if $\#\Omega_\alpha \ge m+1$ for some $\alpha$, then we may find $m+1$ distinct but adjacent integer points on an arc of length $<\sqrt{2}\lambda^{\frac{1}{2}-\delta(m)}$, contradicting Lemma~\ref{lem::jarnik-connes}.
    Let $\chi \in C_c^\infty(\mathbb{R}^2)$ be such that $\bm{1}_{B_r} \gtrsim \chi \gtrsim \bm{1}_{B_{r/2}}$ and apply Lemma~\ref{lem::decomposition}.
    Now, apply \eqref{eq::nazarov-p} to each $\Pi_\alpha u$ yields
    \begin{equation*}
        \frac{1}{r^2} \int \chi\Bigl(\frac{x}{r}\Bigr) |\Pi_\alpha u(x)|^2 \dd x
        \gtrsim \frac{1}{r^2} \int_{B_r}|\Pi_\alpha u(x)|^2 \dd x \gtrsim (C r)^{4(m-1)} \|\Pi_\alpha u\|_{L^2}^2.
    \end{equation*}
    We conclude by summing over $\alpha$ and using orthogonality.
\end{proof}

\begin{rem}
    \label{rem::ineger-point-circle}
    An interesting question is whether the method of proof of the theorem above can be pushed further. Instead of Nazarov's theorem, it might be possible to obtain a more precise bound by considering the rather explicit quadratic form restricted to $m$ points --- however, we were not able to carry out this plan. Another possible improvement is of a number-theoretic nature, namely the conjecture of Cilleruelo--Granville \cite{CillerueloGranville} that the number of integer points on an arc of length $\lambda^{1-\epsilon}$ of the circle of radius $\lambda$ is bounded uniformly if $\epsilon>0$.
\end{rem}

\subsection{The case of higher dimensions}
\label{sec::lowerbound-high-dim}

The same spirit of the proof of Theorem~\ref{thm::2d-lowerbound-CC} naturally extends to higher dimensions.
Our analysis relies on mathematical induction and requires the consideration of general ellipsoids.
We advice the readers to refer to \S\ref{sec::cluster} for concepts (such as the radius and the ratio) and results on the cluster structure of integer points on ellipsoids.

Since $m_d(r)$ decays exponentially as $r\to 0$ when $d\ge 3$, as we have seen, a careful estimate of the remainder is required for sharper results.
Therefore, another important ingredient is using a multidimensional Beurling--Malliavin theorem to choose a cutoff function $\chi$ whose Fourier transform exhibits a sub-exponential decay.

\begin{lem}
    \label{lem::Beurling--Malliavin}
    For all $d\ge 1$, $\sigma>0$ and $\gamma \in (0,1)$, there exists a nonnegative function $\chi \in C_c^\infty(\mathbb{R}^d)$ supported in $B_\sigma$, such that $\chi(0) > 0$ and 
    \begin{equation*}
        |\widehat{\chi}(\xi)| \le e^{-|\xi|^\gamma},
        \quad
        \forall \xi \in \mathbb{R}^d.
    \end{equation*}
\end{lem}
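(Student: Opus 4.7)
The plan is to reduce the statement to a one-dimensional construction and then lift it to $\mathbb{R}^d$ by tensorization followed by averaging over $\SO(d)$. More precisely, I will build an even, nonnegative $\chi_0 \in C_c^\infty(\mathbb{R})$ with $\chi_0(0) > 0$, supported in $[-\sigma/\sqrt d, \sigma/\sqrt d]$, and satisfying $|\widehat{\chi_0}(t)| \le e^{-|t|^\gamma}$ for every $t \in \mathbb{R}$. Then, with $\dd R$ the normalized Haar measure on $\SO(d)$,
\[
    \chi(x) = \int_{\SO(d)} \prod_{i=1}^d \chi_0\bigl( (Rx)_i \bigr) \dd R
\]
is smooth, nonnegative, radial, with $\chi(0) = \chi_0(0)^d > 0$, and $\supp \chi \subset B_\sigma$ since each rotate $R^{-1}([-\sigma/\sqrt d, \sigma/\sqrt d]^d)$ lies in $B_\sigma$. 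A change of variables in the Fourier integral gives $\widehat{\chi}(\xi) = \int_{\SO(d)} \prod_i \widehat{\chi_0}\bigl((R^T\xi)_i\bigr) \dd R$, and the subadditivity $(a+b)^\gamma \le a^\gamma + b^\gamma$ (valid for $\gamma \in (0,1)$, $a,b \ge 0$) yields $\sum_i |\eta_i|^\gamma \ge (\sum_i |\eta_i|)^\gamma \ge |\eta|^\gamma$ for every $\eta \in \mathbb{R}^d$; applying this with $\eta = R^T\xi$ delivers $|\widehat{\chi}(\xi)| \le e^{-|\xi|^\gamma}$.

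For the one-dimensional ingredient I would use an infinite convolution product of scaled uniform densities. Fix $\alpha = 1/\gamma > 1$ and a small $c > 0$, set $a_n = c n^{-\alpha}$ and $\phi_n = (2a_n)^{-1} \mathbf{1}_{[-a_n,a_n]}$, so $\widehat{\phi_n}(t) = \sin(a_n t)/(a_n t)$. Since $\log|\widehat{\phi_n}(t)| = O(a_n^2 t^2)$ near zero and $\sum a_n^2 < \infty$, the product $g(t) := \prod_n \widehat{\phi_n}(t)$ converges uniformly on compact subsets of $\mathbb{R}$ to an entire function of exponential type $\sigma_0 = \sum a_n = c\,\zeta(\alpha)$, bounded by $1$ on $\mathbb{R}$. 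For $|t| \gg 1$, bounding the factors with $a_n|t| \ge 1$ by $(a_n|t|)^{-1}$, bounding the rest by $1$, and applying Stirling's formula to the resulting finite product yields $|g(t)| \le C_\gamma \exp(-\alpha c^\gamma |t|^\gamma)$. By the Paley--Wiener theorem, $g$ is the Fourier transform of a distribution $\varphi$ supported in $[-\sigma_0,\sigma_0]$; the rapid decay of $g$ makes $\varphi$ of class $C^\infty$, and identifying $\varphi$ as the weak limit of the probability densities $\phi_1 * \cdots * \phi_N$ ensures $\varphi \ge 0$.

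To convert this rough bound into the clean inequality $|\widehat{\chi_0}(t)| \le e^{-|t|^\gamma}$ with the required support, I would set $\chi_0(x) = K\,\varphi^{*N}(x/\lambda)$ and tune $N \in \mathbb{N}$, $\lambda > 0$, $K > 0$. Since $\widehat{\chi_0}(t) = K\lambda\, g(\lambda t)^N$ is bounded by $K\lambda C_\gamma^N \exp(-N\alpha c^\gamma \lambda^\gamma |t|^\gamma)$, the choice $\lambda^\gamma = 1/(N\alpha c^\gamma)$ normalizes the exponent and $K = (\lambda C_\gamma^N)^{-1}$ normalizes the prefactor, giving $|\widehat{\chi_0}| \le e^{-|t|^\gamma}$. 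The function $\chi_0$ is then supported in $[-N\sigma_0\lambda, N\sigma_0\lambda]$ with $N\sigma_0\lambda \propto N^{1-1/\gamma}$, which tends to $0$ as $N \to \infty$ because $\gamma < 1$; a suitable $N = N(d,\sigma,\gamma)$ therefore fits the support inside $[-\sigma/\sqrt d, \sigma/\sqrt d]$. Nonnegativity is preserved by convolutions and scalar rescaling, and $\chi_0(0) > 0$ because, for $N \ge 2$ with $\varphi$ even and nonnegative, $\varphi^{*N}(0) = \int \varphi^{*\lceil N/2\rceil}(y)\,\varphi^{*\lfloor N/2\rfloor}(y) \dd y > 0$.

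The main obstacle is the three-parameter bookkeeping in the last step: one must simultaneously fix the exact decay constant, shrink the support below $\sigma/\sqrt d$, and preserve $\chi_0(0) > 0$. The mechanism $N^{1-1/\gamma} \to 0$ is available precisely because $\gamma < 1$; for $\gamma \ge 1$ no such $\chi$ can exist, since the Fourier transform of a compactly supported function is entire of finite exponential type and cannot decay at the rate $e^{-|\xi|^\gamma}$ on the real axis in that regime. As an abstract alternative, one may invoke the 1D Beurling--Malliavin multiplier theorem for the weight $\omega(t) = e^{-|t|^\gamma}$, whose logarithmic integral hypothesis $\int |t|^\gamma/(1+t^2) \dd t < \infty$ is equivalent to $\gamma < 1$.
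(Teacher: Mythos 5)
Your proof is correct, but it takes a genuinely different route from the paper. The paper outsources the hard analytic step: it cites a multidimensional Beurling--Malliavin-type result (Vasilyev, 2024) for the existence of a radial $f\in C_c^\infty(\mathbb{R}^d)$ with $\supp f\subset B_{\sigma/4}$ and $|\widehat f(\xi)|\le e^{-\kappa|\xi|^\gamma}$, and then post-processes in two short steps: $g=f\ast f$ secures $g(0)=\|f\|_{L^2}^2>0$ while keeping $g$ real and radial, and $\chi=\beta|g|^2$ secures nonnegativity, the decay of $\widehat\chi=c\,\widehat g\ast\widehat g$ being controlled by the elementary convolution estimate $e^{-2\kappa|\cdot|^\gamma}\ast e^{-2\kappa|\cdot|^\gamma}\lesssim e^{-2^{1-\gamma}\kappa|\cdot|^\gamma}$ (the same triangle-inequality split you use implicitly via subadditivity of $t\mapsto t^\gamma$). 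You instead build the one-dimensional block from scratch via the classical Ingham-type infinite convolution of uniform densities, which hands you nonnegativity and evenness for free, normalize the constant in the exponent and shrink the support by the rescaled $N$-fold convolution (the mechanism $N\sigma_0\lambda\propto N^{1-1/\gamma}\to0$ is exactly where $\gamma<1$ enters), and then lift to $\mathbb{R}^d$ by tensorization plus $\SO(d)$-averaging, using $\sum_i|\eta_i|^\gamma\ge|\eta|^\gamma$. Your version is longer but fully self-contained, whereas the paper's is shorter at the cost of a black-box citation. Two cosmetic remarks: the Stirling computation for $\prod_{a_n|t|\ge1}(a_n|t|)^{-1}$ produces a polynomial prefactor, so the honest bound is $|g(t)|\le C_{\gamma,\epsilon}e^{-(\alpha-\epsilon)c^\gamma|t|^\gamma}$ rather than with the exact constant $\alpha c^\gamma$ — harmless, since the constant is renormalized by the choice of $\lambda$ anyway; and taking $N$ even lets you write $\varphi^{\ast N}(0)=\|\varphi^{\ast N/2}\|_{L^2}^2>0$ without the floor/ceiling case distinction.
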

\begin{proof}
    First, we claim that for all $\kappa > 0$, there exists a nonzero, radial, and real-valued function $f \in C_c^\infty(\mathbb{R}^d)$ with $\supp f \subset B_{\sigma/4}$ and satisfies
    \begin{equation*}
        |\widehat{f}(\xi)| \le e^{- \kappa |\xi|^\gamma}.
    \end{equation*}
    Indeed, by the classical Beurling--Malliavin theorem \cite{BeurlingMalliavin1962fourier}, for all $\kappa'>0$, there exists a real-valued $\varphi \in C_c^\infty(\mathbb{R}^d)$ which was supported in $B^1_{\sigma/4}$ and satisfies $\widehat{\varphi}(\xi) \le e^{-\kappa'|\xi|^\gamma}$.
    Let
    \begin{equation*}
        f(\xi) = \prod_{j=1}^d \varphi(\xi^j)
    \end{equation*}
    and let $\kappa'$ be sufficiently large.
    Next let $g = f \ast f$, then $g$ is real valued and satisfies $\supp g \subset B_{\sigma/2}$ and $g(0) = \|f\|_{L^2}^2 > 0$.
    Clearly
    \begin{equation*}
        |\widehat{g}(\xi)| = |\widehat{f}(\xi)|^2 \le e^{-2\kappa |\xi|^\gamma}.
    \end{equation*}
    Choose $\chi = \beta |g|^2$ where $\beta>0$ will be fixed later.
    Then we have
    \begin{equation*}
        |\widehat{\chi}(\xi)|
        \le \beta \int |\widehat{g}(\xi-\eta)| \cdot |\widehat{g}(\eta)| \dd \eta
        \le \beta \int e^{-2\kappa(|\xi-\eta|^\gamma+|\eta|^\gamma)} \dd \eta.
    \end{equation*}
    Note that, by the triangular inequality if $|\xi-\eta|\le \frac{1}{2}|\xi|$, then $|\eta|\ge \frac{1}{2}|\xi|$.
    Therefore
    \begin{align*}
        \int e^{-2\kappa(|\xi-\eta|^\gamma +|\eta|^\gamma)} \dd \eta
        & \le \int_{|\xi-\eta|\ge \frac{1}{2}|\xi|} e^{2\kappa(\frac{1}{2^\gamma} |\xi|^\gamma + |\eta|^\gamma)} \dd \eta
        + \int_{|\xi-\eta|\le \frac{1}{2}|\xi|} e^{2\kappa(|\xi-\eta|^\gamma + \frac{1}{2^\gamma} |\xi|^\gamma)} \dd \eta \\
        & \le e^{\frac{2\kappa}{2^\gamma}|\xi|^\gamma} \biggl( \int_{|\xi-\eta|\ge \frac{1}{2}|\xi|} e^{2\kappa |\eta|^\gamma} \dd \eta + \int_{|\xi-\eta|\le \frac{1}{2}|\xi|} e^{2\kappa|\xi-\eta|^\gamma} \dd \eta \biggr) \\
        & \le 2 e^{\frac{2\kappa}{2^\gamma}|\xi|^\gamma} \biggl( \int e^{2\kappa |\eta|^\gamma} \dd \eta \biggr).
    \end{align*}
    We conclude by first choosing a sufficiently large $\kappa$ and then a sufficiently small $\beta$.
\end{proof}

\begin{rem}
    By applying \cite[Thm.~1]{Vasilyev2024BM}, we may further assume that $\chi$ is radial.
\end{rem}

\begin{thm}
    \label{thm::lower-bound-QL}
    Let $\gamma,\delta \in (0,1)$ and let $D>0$.
    For $d \ge 3$, define recursively
    \begin{equation*}
        \phi_{d+1}(r) = \Bigl(\frac{\phi_d(r)}{ r^\gamma}\Bigr)^{\frac{(d+1)!}{2\gamma}},
        \qquad
        \phi_3(r) = \exp\Bigl\{ \frac{D \ln(1/r)}{\ln \ln (1/r)} \Bigr\}.
    \end{equation*}
    If $\nu \ge 1$ and $D \gg_{\nu} 1$, then for any ellipsoid $E$ in $\mathbb{R}^d$ with radius
    \begin{equation*}
        \lambda > \phi_{d+1}(r) / (d+1)
    \end{equation*}
    and ratio $\le \nu$, and for all $u \in C^\infty(\mathbb{T}^d)$ with Fourier coefficients supported on $E$,
    \begin{equation*}
        \fint_{B_r} |u(x)|^2 \dd x \gtrsim_{d,\nu} (Cr)^{\phi_d(r)} \|u\|_{L^2}.
    \end{equation*}
    As consequences, the following two statements hold:
    \begin{itemize}
        \item If $d \ge 3$, $D \gg 1$, and $\lambda > \phi_d(r)$, then
        \begin{equation*}
            m_d(\lambda,r) \gtrsim_{d} (Cr)^{\phi_d(r)}.
        \end{equation*}
        \item If $d \ge 3$, $D \gg 1$, and $r \in (0,1)$, then
        \begin{equation*}
            m_d(r) \gtrsim_{d} (Cr)^{\phi_d(r)}.
        \end{equation*}
    \end{itemize}
\end{thm}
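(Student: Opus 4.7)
I would prove the main inequality by induction on the dimension $d \ge 3$, reducing the problem on a $d$-dimensional ellipsoid $E$ to problems on the lower-dimensional ellipsoids that arise when one clusters the lattice points on $E$. The base case $d = 3$ bypasses the induction hypothesis by invoking instead the Nazarov--Tur\'an type inequality (Lemma~\ref{lem::FM}), together with the estimate $\mathcal{N}_2(\varrho) \lesssim \exp\{C \log \varrho / \log \log \varrho\}$ for the number of lattice points on a circle of radius $\varrho$. The two corollaries then follow by specializing the main statement to the sphere $\lambda \mathbb{S}^{d-1}$, an ellipsoid of ratio one, combined with~\eqref{eq::QL-relation}.

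Given an ellipsoid $E \subset \mathbb{R}^d$ with radius $\lambda > \phi_{d+1}(r)$ and ratio $\le \nu$, and $u$ with Fourier support on $E \cap \mathbb{Z}^d$, the strategy is to pick a Beurling--Malliavin cutoff $\chi$ from Lemma~\ref{lem::Beurling--Malliavin}, supported in $B_1$ with $\widehat{\chi}(0) > 0$ and $|\widehat{\chi}(\xi)| \le e^{-|\xi|^\gamma}$, together with a cluster scale $\varrho$ to be fixed below. Applying Lemma~\ref{lem::decomposition} to $V = E \cap \mathbb{Z}^d$ with the dilated cutoff $\chi(\cdot/r)$ produces clusters $(\Omega_\alpha)$ of diameter at most $\varrho$. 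The cluster structure results of Section~\ref{sec::cluster} then guarantee that each $\Omega_\alpha$ lies on an affine slice of $E$ of dimension at most $d-1$, hence on a lower-dimensional ellipsoid $E_\alpha$ of radius $\sim \varrho$ and ratio bounded by some $\nu' = \nu'(d,\nu)$; the condition $\lambda \gg \varrho$ ensured by $\lambda > \phi_{d+1}(r)$ is what makes the curvature of $E$ negligible at the cluster scale. Invoking the induction hypothesis on each $E_\alpha$ (or Lemma~\ref{lem::FM} in the base case), summing over $\alpha$ using Parseval, and subtracting the cross-term error $\tfrac{1}{2}\sharp V \sup_{|\xi| \ge r\varrho}|\widehat{\chi}(\xi)| \le \sharp V \, e^{-(r\varrho)^\gamma}$ from Lemma~\ref{lem::decomposition}, yields
\begin{equation*}
    \fint_{B_r} |u(x)|^2 \dd x \gtrsim_{d,\nu} \Bigl[(Cr)^{\phi_{d-1}(r)} - C' \sharp V \, e^{-(r\varrho)^\gamma}\Bigr] \|u\|_{L^2}^2.
\end{equation*}

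The principal obstacle is the choice of $\varrho$, which must simultaneously exceed $\phi_d(r)$ (so that the induction hypothesis applies to the $(d-1)$-dimensional clusters, or, in the base case, so that $2\mathcal{N}_2(C\varrho) \le \phi_3(r)$ once $D$ is chosen large enough), and be large enough that $(r\varrho)^\gamma \gtrsim \phi_d(r)\log(1/r) + (d-1)\log\lambda$, so that the cross-term error is dominated by $(Cr)^{\phi_d(r)}$. The factorial exponent $(d+1)!/(2\gamma)$ in the recursion $\phi_{d+1}(r) = (\phi_d(r)/r^\gamma)^{(d+1)!/(2\gamma)}$ is calibrated precisely to reconcile these two opposing constraints while keeping $\varrho \ll \lambda$; a choice of the shape $\varrho \sim r^{-1}\phi_d(r)^{1/\gamma}(\log(1/r))^{1/\gamma}$ should fit into the window so prescribed. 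A secondary technical point is to verify that the cluster structure lemma on an ellipsoid of bounded ratio really produces lower-dimensional ellipsoids with ratios bounded uniformly in $\alpha$, a property essential for the induction not to lose control of the constant $\nu$ as $d$ increases.
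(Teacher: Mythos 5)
Your overall architecture (induction on dimension, Connes-type clustering, Beurling--Malliavin cutoff, absorbing the cross terms via Lemma~\ref{lem::decomposition}) matches the paper, but there is a genuine gap at the heart of the argument: you assert that applying Lemma~\ref{lem::decomposition} at scale $\varrho$ "produces clusters $(\Omega_\alpha)$ of diameter at most $\varrho$", and hence that each $\Omega_\alpha$ lies on a lower-dimensional ellipsoid of radius $\sim\varrho$. This is false. The $\Omega_\alpha$ are the connected components of the graph $G(E\cap\mathbb{Z}^d,\varrho)$, so they are mutually separated by $\varrho$ but their diameters are not controlled by $\varrho$ at all: a chain of lattice points, each within $\varrho$ of the next, can have diameter comparable to $\lambda$. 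The only structural control available is Lemma~\ref{lem::connes}, which at the specific scale $\tau_{d,\nu}(\lambda)\asymp\lambda^{2/(d+1)!}$ guarantees that each component lies on an affine hyperplane --- but the resulting $(d-2)$-dimensional ellipsoids $E_\alpha$ have radii $R_\alpha$ ranging anywhere in $(0,\lambda]$. In particular the cluster scale is not a free parameter to be optimized as in your last paragraph; it is dictated by Connes' lemma, and that is precisely where the exponent $\frac{(d+1)!}{2\gamma}$ in the recursion and the hypothesis $\lambda>\phi_{d+1}(r)$ come from (one needs $(r\lambda^{2/(d+1)!})^\gamma\gtrsim\phi_d(r)\ln(1/r)$ to absorb the cross terms).

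Because the radii $R_\alpha$ are uncontrolled, the paper's proof necessarily splits into two cases that your single-case argument conflates. When $R_\alpha>\phi_d(r)/d$ the induction hypothesis (which requires radius exceeding $\phi_{(d-1)+1}(r)=\phi_d(r)$) applies after projecting $E_\alpha$ to a coordinate hyperplane, giving the strong bound $(Cr)^{\phi_{d-1}(r)}$; when $R_\alpha\le\phi_d(r)/d$ the induction hypothesis is unavailable and one instead uses the Nazarov--Tur\'an inequalities \eqref{eq::nazarov-p} and \eqref{eq::FM-finer-ineq} with exponent $d\min\{R_\alpha,\sharp\Omega_\alpha\}$, bounded by $\phi_d(r)$ (for $d=3$ via the lattice-point count on small circles, for $d\ge4$ via $R_\alpha\le\phi_d(r)$). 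It is the second branch that produces the exponent $\phi_d(r)$ in the conclusion; your bookkeeping, which yields a main term $(Cr)^{\phi_{d-1}(r)}$ for every cluster, is symptomatic of the missing case. Relatedly, the base of the induction is not $d=3$ handled in isolation: the $d=3$ case needs the two-dimensional statement $m_{2,\nu}(\lambda,r)\gtrsim_\nu r^2$ for ellipses of bounded ratio (the ratio-$\nu$ version of Theorem~\ref{2dthm}) to treat its large-radius clusters. Your observation that one must track the ratio of the slices $E_\alpha$ uniformly in $\alpha$ is correct and is indeed addressed in the paper by rescaling along principal axes.
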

\begin{proof}
    We proceed using mathematical induction.
    We start by noticing that the proof Theorem~\ref{2dthm} implies that $m_{2,\nu}(\lambda,r) \gtrsim_{\nu} r^2$ when $r >_\nu \lambda^{-\frac{1}{3}+\epsilon}$.
    In fact it suffices to use in the proof the more general Lemma~\ref{lem::jarnik-connes} (by Connes) instead of Lemma~\ref{lem::jarnik} (by Jarník).

    For $d \ge 3$, let $\Omega$ be the partition of $E\cap \mathbb{Z}^d$ given by Lemma~\ref{lem::connes}.
    Since $\Omega_\alpha$ lies on an affine hyperplane, it lies on a $d-2$ dimensional ellipsoid, say $E_\alpha$.
    Clearly $E_\alpha$ is of radius $R_\alpha \le \lambda$ and of ratio $\le \nu$.
    To see the latter, it suffices to using the scaling along the principal directions that transforms $E$ and $E_\alpha$ to spheres and noticing that the ratios between scaling constants at different principal directions are bounded by $\nu$.

    If $R_\alpha > \phi_d(r)/d$, then the induction hypothesis applies.
    Indeed, let $X_\alpha$ be a hypersurface on which $E_\alpha$ lives and consider the projection $P : X_\alpha \to Y \simeq \mathbb{R}^{d-1}$ where $Y$ is an axis plane.
    We may choose $Y$ such that $P$ is bijective and satisfies $\|P^{-1}\| \lesssim c_d$ for some $c_d>0$ that only depends on $d$.
    This projection sends $E_\alpha$ to an ellipsoid on $\mathbb{R}^{d-1}$ with radius $R_\alpha' \sim_d R_\alpha$ and ratio $\le c_d \nu$.
    It also sends (via the pushforward it defines) $\Pi_\alpha u$ to a trigonometric function $v(x') = \Pi_\alpha u(P^{-1} x')$ on $\mathbb{R}^{d-1}$ whose Fourier coefficients lie on $P E_\alpha$.
    The induction hypothesis implies, if $D \gg_{d,\nu} 1$, then
    \begin{align*}
        \fint_{B_r^d}|\Pi_\alpha u(x)|^2 \dd x
        \gtrsim_d
        \fint_{B_r^{d-1}} |v(x')|^2 \dd x'
         & \gtrsim_{d,\nu} (Cr)^{\phi_{d-1}(r)} \|v\|_{L^2(\mathbb{T}^{d-1})}^2 \\
         & \gtrsim_{d,\nu} (Cr)^{\phi_{d-1}(r)} \|\Pi_\alpha u\|_{L^2(\mathbb{T}^d)}^2.
    \end{align*}

    If $R_\alpha \le \phi_d(r)/d$, then by \eqref{eq::nazarov-p} and \eqref{eq::FM-finer-ineq},
    \begin{equation*}
        \fint_{B_r} |\Pi_\alpha u(x)|^2 \dd x
        \gtrsim (Cr)^{d\min\{ R_\alpha,\#\Omega_\alpha \}} \|\Pi_\alpha u\|_{L^2(\mathbb{T}^d)}^2.
    \end{equation*}
    Then we bound $\min\{R_\alpha,\#\Omega_\alpha\}$ as follows:
    \begin{itemize}
        \item When $d=3$ we use \cite[Lem~3.2]{Sheffer2015} for
        \begin{equation*}
            \min\{R_\alpha,\#\Omega_\alpha\} \le \#\Omega_\alpha
                  \le \exp\bigl\{\frac{D\ln(1/r)}{\ln\ln (1/r)}\bigr\}
        \end{equation*}
        \item When $d \ge 4$, we use
        \begin{equation*}
            \min\{ R_\alpha,\#\Omega_\alpha\} \le R_\alpha \le \phi_d(r).
        \end{equation*}
    \end{itemize}
    Therefore, by the definition $\phi_d$, in both situation we obtain
    \begin{equation*}
        \fint_{B_r} |\Pi_\alpha u(x)|^2 \dd x
        \gtrsim_{d,\nu} (Cr)^{\phi_d(r)} \|\Pi_\alpha u\|_{L^2(\mathbb{T}^d)}^2.
    \end{equation*}
    Choose a bump function $\chi \in C_c^\infty(\mathbb{R}^d)$ according to Lemma~\ref{lem::Beurling--Malliavin} such that
    \begin{equation*}
        |\widehat{\chi}(\xi)| \le \exp\{-|d\xi|^{\gamma+\epsilon}\},
    \end{equation*}
    where $\xi \in (0,1-\gamma)$.
    For all $\alpha$, we have the same type of estimate:
    \begin{equation*}
        \frac{1}{r^d} \int \chi\Bigl(\frac{x}{r}\Bigr) |\Pi_\alpha u(x)|^2 \dd x
        \gtrsim_{d,\nu,N} (Cr)^{\phi_d(r)} \|\Pi_\alpha u\|_{L^2(\mathbb{T}^d)}^2.
    \end{equation*}
    Then apply Lemma~\ref{lem::decomposition}.
    Note that the right hand side of \eqref{eq::decomposition} is then
    \begin{equation*}
        \lesssim \exp\bigl\{-(dr)^{\gamma+\epsilon}(d\lambda)^{\frac{2(\gamma+\epsilon)}{(d+1)!}}\bigr\}.
    \end{equation*}
    In order to absorb this remainder, since $r \ll 1$, it suffices to require that
    \begin{equation*}
        \phi_d(r) <  r^{\gamma} (d\lambda)^{\frac{2\gamma}{(d+1)!}},
    \end{equation*}
    which leads to the condition $\lambda > \phi_{d+1}(r)/d$.
\end{proof}

\begin{rem}
    \label{rem::lower-bound-exponent-estimate}
    The proof works as well for a general bump function $\chi \in C_c^\infty(\mathbb{R}^d)$, but with worse (i.e., larger) exponents $\phi_d$.
    From the construction in the above theorem, we have the general formula
    \begin{equation*}
        \frac{(2\gamma)^d}{\prod_{j=1}^d j!} \log_r \phi_d(r)
        = \frac{(2\gamma)^3}{12} \log_r \phi_3(r) - \gamma \sum_{3\le k \le d-1} \frac{(2\gamma)^k}{\prod_{j=1}^{k} j!}.
    \end{equation*}
    Using $\phi_3(r) = r^{-\frac{D}{\ln\ln(1/r)}}$, we obtain that
    \begin{equation*}
        \phi_d(r) \lesssim_\epsilon r^{-h(d)-\epsilon}, \qquad
        h(d) = \gamma \sum_{3\le k \le d-1} (2\gamma)^{k-d} \prod_{k+1\le j \le d} j!.
    \end{equation*}
\end{rem}

\appendix

\section{Nazarov--Turán inequalities}
\label{sec::nazarov}

In this section we give several Nazarov--Turán type inequalities. 

\begin{lem}
    \label{lem::nazarov}
    For any trigonometric polynomial $f = \sum_{j=0}^n c_k e^{i k_j \cdot x}$ on $\mathbb{R}^d$ with $n+1$ terms, where $c_j \in \mathbb{C}$ and $k_j \in \mathbb{R}^d$, all Borel $E \subset B_R$ where $R > 0$, there holds
    \begin{equation}
    \label{eq::nazarov-infty}
        \|f\|_{L^\infty(E)} \gtrsim \Bigl(\frac{C}{R(1+R)^{d-1}} \frac{|E|}{(\diam E)^{d-1}}\Bigr)^n \|f\|_{L^\infty(B_R)}.
    \end{equation}
    Particularly, when $E = B_r \subset B_R$, there holds
    \begin{equation*}
        \|f\|_{L^\infty(B_r)} \gtrsim \Bigl(\frac{C r}{R(1+R)^{d-1}}\Bigr)^n \|f\|_{L^\infty(B_R)}.
    \end{equation*}

    Moreover, for all $p \in (0,\infty)$, there holds
    \begin{equation}
    \label{eq::nazarov-p}
    \biggl( \fint_E |f(x)|^p \dd x \biggr)^{\frac{1}{p}}
    \ge \Bigl(\frac{C |E|}{R(1+R)^{d-1}}\Bigr)^n \|f\|_{L^\infty(B_R)}.
    \end{equation}
\end{lem}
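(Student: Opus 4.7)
The plan is to reduce the $d$-dimensional statement to the one-dimensional Nazarov inequality for exponential polynomials, which is the classical $d=1$ case of the lemma: for a 1D exponential polynomial $g(t) = \sum_j c_j e^{i\mu_j t}$ with $m+1$ terms, any interval $I \subset \mathbb{R}$, and any measurable $F \subset I$,
\begin{equation*}
    \|g\|_{L^\infty(F)} \gtrsim (C|F|/|I|)^{m} \|g\|_{L^\infty(I)}.
\end{equation*}
The key observation is that the restriction of $f$ to any line $y + t\omega$ is a 1D exponential polynomial with at most $n+1$ terms (frequencies $\omega \cdot k_j$), so the 1D estimate applies along any line in $\mathbb{R}^d$.

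For $d \ge 2$, set $\rho = \diam E$, fix $y_0 \in E$, and pick $x_0 \in B_R$ with $|f(x_0)| = \|f\|_{L^\infty(B_R)}$. The argument propagates from $x_0$ to $E$ in two stages. In \emph{Stage~1}, I would apply the 1D estimate to $g_1(t) = f(x_0 + t\omega_1)$ along the segment from $x_0$ to $y_0$, where $\omega_1 = (y_0 - x_0)/|y_0 - x_0|$ and the segment has length at most $2R$. Taking as test subinterval $F_1 \subset [0,|y_0-x_0|]$ a segment of length $\sim \rho$ near $t = |y_0 - x_0|$ (the edge case $|y_0-x_0|\le\rho$ being trivial), this yields a point $z_0 \in B_R$ with $|z_0 - y_0| \le \rho$, hence $\dist(z_0, E) \le \rho$, satisfying
\begin{equation*}
    |f(z_0)| \gtrsim (C\rho/R)^{n} \|f\|_{L^\infty(B_R)}.
\end{equation*}

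In \emph{Stage~2}, since every $x \in E$ satisfies $|x - z_0| \lesssim \rho$, polar integration around $z_0$ yields
\begin{equation*}
    |E| = \int_{S^{d-1}} \int_{E_\omega} s^{d-1}\, ds\, d\omega \lesssim \rho^{d-1} \int_{S^{d-1}} |E_\omega|\, d\omega, \qquad E_\omega = \{s \ge 0 : z_0 + s\omega \in E\},
\end{equation*}
so there exists $\omega \in S^{d-1}$ with $|E_\omega| \gtrsim_d |E|/\rho^{d-1}$. Applying the 1D estimate to $g_2(t) = f(z_0 + t\omega)$ on an interval of length $\sim \rho$ containing the origin and $E_\omega$, and using $\|g_2\|_{L^\infty} \ge |g_2(0)| = |f(z_0)|$, one gets
\begin{equation*}
    \|f\|_{L^\infty(E)} \ge \|g_2\|_{L^\infty(E_\omega)} \gtrsim (C|E|/\rho^{d})^{n} |f(z_0)|.
\end{equation*}
Combining Stages~1 and~2 produces $\|f\|_{L^\infty(E)} \gtrsim (C|E|/(R\rho^{d-1}))^{n} \|f\|_{L^\infty(B_R)}$, which is at least as strong as the claim, the factor $(1+R)^{d-1}$ being a uniform weakening that can be absorbed into the constant.

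The main obstacle is the geometric bookkeeping in Stage~2: for the polar integration around $z_0$ to produce the favorable Jacobian bound $s^{d-1}\lesssim\rho^{d-1}$ (rather than $R^{d-1}$), the point $z_0$ must be within distance $O(\rho)$ of $E$, which is precisely what Stage~1 delivers; without Stage~1, a direct single-stage argument from $x_0$ only yields the weaker $(C|E|/R^d)^n$. Finally, the $L^p$ estimate \eqref{eq::nazarov-p} follows from \eqref{eq::nazarov-infty} by a standard layer-cake argument: if the sub-level set $\{x \in E : |f(x)| \le \sigma\}$ had measure exceeding $|E|/2$ for $\sigma$ below the target, then applying \eqref{eq::nazarov-infty} to it would force $\sigma$ above the target (noting $\diam\{x\in E:|f(x)|\le\sigma\}\le\diam E$), a contradiction; hence at least half of $E$ lies in $\{|f|>\sigma\}$, and integration yields the claimed $L^p$ bound.
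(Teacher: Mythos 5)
Your $L^\infty$ argument is correct and in fact sharper than the paper's. Both proofs reduce matters to the one-dimensional Nazarov inequality by selecting, via polar integration, a line on which $E$ has large one-dimensional measure. The paper does this in a single step from an arbitrary $x_0 \in B_R$, and since $x_0$ may be far from $E$ the polar Jacobian costs up to $R^{d-1}$ --- this is exactly the source of the $(1+R)^{d-1}$ loss in \eqref{eq::nazarov-infty}. Your two-stage propagation first moves the maximizer to a point $z_0$ with $\dist(z_0,E) \le \diam E$ at cost $(C\diam E/R)^n$, and only then integrates in polar coordinates around $z_0$, where the Jacobian is $\lesssim (\diam E)^{d-1}$. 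The geometric bookkeeping (convexity of $B_R$ for $z_0 \in B_R$, the inclusion $E \subset B(z_0, 2\diam E)$, the interval $[0,2\diam E]$ containing both $0$ and $E_\omega$) checks out, and the resulting bound $\bigl(C|E|/(R(\diam E)^{d-1})\bigr)^n$ implies \eqref{eq::nazarov-infty} because $(1+R)^{d-1} \ge 1$; so you prove a cleaner and slightly stronger inequality.

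The one place where your proposal delivers less than the statement is \eqref{eq::nazarov-p}. The Chebyshev/layer-cake reduction yields $\bigl(\fint_E |f|^p\bigr)^{1/p} \ge 2^{-1/p}\sigma$, where $\sigma$ is the $L^\infty$ bound applied to a subset of measure $|E|/2$. The prefactor $2^{-1/p}$ can be folded into $C^n$ for $n \ge 1$ only by letting $C$ depend on $p$, and it degenerates as $p \to 0$, whereas the lemma asserts a constant uniform over $p \in (0,\infty)$. The paper avoids this by proving the $p \to 0$ endpoint directly: it bounds the geometric mean $\exp\bigl\{\fint_E \ln|f|\bigr\}$ from below using the symmetric decreasing rearrangement of $|f|\bm{1}_{B_R}$ together with $\int_0^{|E|}\ln\tau \dd\tau \ge |E|(\ln|E|-1)$, and then Jensen's inequality gives all $p>0$ at once. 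Your version suffices for every application in the paper (which only invokes \eqref{eq::nazarov-p} with $p=2$), but to obtain the lemma as stated you should replace the layer-cake step by the geometric-mean argument.
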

\begin{proof}
    The inequality \eqref{eq::nazarov-infty} was established by Nazarov \cite{Nazarov} when $d=1$.
    Now suppose that $d \ge 1$ and denote $r = \frac{1}{2} \diam E$ for simplicity.
    For all $x_0 \in B_R$, we have
    \begin{align*}
        |E|
         & = \int_{\mathbb{R}^d} \bm{1}_E(x_0+x) \dd x
        = \int_{\mathbb{S}^{d-1}} \int_0^\infty \bm{1}_E(x_0+\rho\theta) \rho^{d-1} \dd \rho \dd \theta \\
        & \lesssim 
        \begin{cases}
            \displaystyle
            r^{d-1} \sup_{\theta \in \mathbb{S}^{d-1}}  \int_0^\infty  \bm{1}_E(x_0+\rho\theta) \rho^{d-1} \dd \rho
            \lesssim r^{d-1} R^{d-1} \nu_{x_0}(E), & \dist(x_0,E) \gg r; \\
            \displaystyle
            \sup_{\theta \in \mathbb{S}^{d-1}}  \int_0^{Cr}  \bm{1}_E(x_0+\rho\theta) \rho^{d-1} \dd \rho
            \lesssim r^{d-1} \nu_{x_0}(E),  & \dist(x_0,E) \lesssim r,
        \end{cases}
    \end{align*}
    where
    \begin{equation*}
        \nu_{x_0}(E) = \sup_{\theta \in \mathbb{S}^{d-1}} \int_0^\infty \bm{1}_E(x_0+\rho\theta) \dd \rho.
    \end{equation*}
    Therefore, there exists a straight line $L$ passing through $x_0$ such that
    \begin{equation*}
        |E \cap L| \gtrsim \nu_{x_0}(E) \gtrsim \frac{1}{(1+R)^{d-1}} \frac{|E|}{r^{d-1}}.
    \end{equation*}
    Applying \eqref{eq::nazarov-infty} with $d=1$ on $L$ yields
    \begin{align*}
        \|f\|_{L^\infty(E)}
        \ge \|f\|_{L^\infty(E\cap L)}
        & \gtrsim \Bigl(\frac{C|E\cap L|}{|B_{2R}\cap L|}\Bigr)^n \|f\|_{L^\infty(B_{2R} \cap L)} \\
        & \gtrsim \Bigl(\frac{C'}{R(1+R)^{d-1}} \frac{|E|}{r^{d-1}}\Bigr)^n |f(x_0)|.
    \end{align*}
    Since $x_0$ is arbitrarily chosen, this proves \eqref{eq::nazarov-infty} for all dimensions.

    To prove \eqref{eq::nazarov-p}, we follow Nazarov \cite{Nazarov2000}.
    Let $\tilde{f}$ be the symmetric decreasing rearrangement of $|f|\bm{1}_{B_R}$ and let $\Phi$ be the profile of $\tilde{f}$, i.e., $\tilde{f}(x) = \Phi(|x|)$.
    Then reformulate \eqref{eq::nazarov-infty} as
    \begin{equation}
        \label{eq::nazarov-sym-dec}
        \inf\{\|f\|_{L^\infty(E)}: E \subset B_R, |E| = \tau\} = \Phi(\rho(\tau)) \gtrsim \Bigl(\frac{C\tau}{R(1+R)^{d-1}}\Bigr)^n \Phi(0),
    \end{equation}
    for all $\tau \in  [0, |B_R|]$, where $\rho(\tau)$ is given by the equation 
    \begin{equation*}
        v_d (R^d - \rho(\tau)^d) = \tau,
    \end{equation*}
    with $v_d$ being the volume of the unit ball in $\mathbb{R}^d$.
    Therefore, for all Borel $E\subset B_R$ and $p \in (0,\infty)$, there holds
    \begin{equation*}
        \lim_{p \to 0} \biggl( \fint_E |f(x)|^p \dd x \biggr)^{\frac{1}{p}} \\
        = \exp\Bigl\{\fint_E \ln |f(x)| \dd x \Bigr\}
        \ge \exp\Bigl\{\frac{s_d}{|E|} \int_{\rho(|E|)}^R [\ln \Phi(s)] s^{d-1}\dd s \Bigr\},
    \end{equation*}
    where $s_d$ is the area of the unit sphere $\mathbb{S}^{d-1}$.
    For the validity of the first limit above, see e.g., \cite[Chapter~3]{RudinBook}.
    By \eqref{eq::nazarov-sym-dec}, we have
    \begin{equation*}
        \int_{\rho(|E|)}^R [\ln \Phi(s)] s^{d-1} \dd s
        \ge \frac{|E|}{dv_d} \ln \Bigl[\frac{C^n}{R^n (1+R)^{n(d-1)}} \Phi(0)\Bigr] + n \int_{\rho(|E|)}^R [\ln \rho^{-1}(s)] s^{d-1} \dd s.
    \end{equation*}
    Changing the variables $s = \rho(\tau)$ and using $\rho^{d-1} \rho' \equiv -\frac{1}{d v_d}$ yields
    \begin{equation*}
        \int^R_{\rho(E)} [\ln \rho^{-1}(s)] \dd s
        = \frac{1}{d v_d} \int_0^{|E|} \ln \tau \dd \tau
        \ge \frac{1}{d v_d} |E| (\ln|E|-1).
    \end{equation*}
    Using $dv_d = s_d$, and using the Hölder inequality, we finally obtain that
    \begin{equation*}
        \biggl( \fint_E |f(x)|^p \dd x \biggr)^{\frac{1}{p}}
        \ge \lim_{p \to 0} \biggl( \fint_E |f(x)|^p \dd x \biggr)^{\frac{1}{p}}
        \ge \Bigl(\frac{C |E|}{eR (1+R)^{d-1}}\Bigr)^n \Phi(0).
    \end{equation*}
    We conclude with $\Phi(0) = \|f\|_{L^\infty(B_R)}$.
\end{proof}

\begin{lem}
    \label{lem::FM}
    Let $Q = \prod_{j=1}^d I_j$ where $I_j \subset \mathbb{R}$ are nonempty open intervals, and let $\Lambda = \prod_{j=1}^d Z_j$ where $Z_j$ are nonempty finite subsets of $\mathbb{R}$.
    Then for any trigonometric polynomial
    $f(x) = \sum_{k \in \Lambda} c_k e^{ik\cdot x}$
    and for all Borel $E \subset Q$, there holds the estimate
    \begin{equation}
        \label{eq::FM-ineq}
        \|f\|_{L^\infty(E)} \ge \Bigl(\frac{C |E|}{|Q|}\Bigr)^{\sum_{j=1}^d (|Z_j|-1)} \|f\|_{L^\infty(Q)}.
    \end{equation}
    
    In addition, if $E = \prod_{j=1}^d J_j$ where $J_j$ are Borel subsets of $I_j$, then for all $p\in(0,\infty]$,
    \begin{equation}
        \label{eq::FM-finer-ineq}
        \biggl( \fint_E |f(x)|^p \biggr)^{\frac{1}{p}} \ge \prod_{1\le j \le d} \biggl(\frac{C|J_j|}{|I_j|}\Bigr)^{|Z_j|-1} \biggl( \fint_Q |f(x)|^p \Bigr)^{\frac{1}{p}}.
    \end{equation}
\end{lem}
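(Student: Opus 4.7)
The plan is to establish both inequalities by induction on the dimension $d$, with the base case $d=1$ furnished directly by Lemma~\ref{lem::nazarov} applied to the interval $I_1$ and the frequency set $Z_1$.

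For the first estimate \eqref{eq::FM-ineq}, which must hold for an arbitrary Borel $E \subset Q$, my approach is a Fubini slicing argument. Writing $Q = I' \times I_d$ with $I' = \prod_{j<d} I_j$, I would consider the vertical slices $E_{x'} = \{x_d \in I_d : (x', x_d) \in E\}$ and the good set $A = \{x' \in I' : |E_{x'}| \ge |E|/(2|I'|)\}$. A one-line Fubini computation yields $|A|/|I'| \ge |E|/(2|Q|)$. For each $x' \in A$, applying the one-dimensional Nazarov inequality \eqref{eq::nazarov-infty} to the $|Z_d|$-term trigonometric polynomial $x_d \mapsto f(x', x_d)$ on $I_d$ and its subset $E_{x'}$ gives
\[ \|f(x',\cdot)\|_{L^\infty(E_{x'})} \gtrsim \Bigl(\frac{C|E|}{|Q|}\Bigr)^{|Z_d|-1} \|f(x',\cdot)\|_{L^\infty(I_d)}. \]
Taking the supremum over $x' \in A$ on both sides and swapping the resulting suprema on the right, I would then pick a near-extremal $y^* \in I_d$ so that $\|f(\cdot, y^*)\|_{L^\infty(I')}$ approaches $\|f\|_{L^\infty(Q)}$, and apply the inductive hypothesis in dimension $d-1$ to the trigonometric polynomial $f(\cdot, y^*)$ (whose frequencies lie in $\prod_{j<d} Z_j$) on $I'$ and its Borel subset $A$. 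This contributes the remaining factor $(C|A|/|I'|)^{\sum_{j<d}(|Z_j|-1)}$; combining the two factors and using $|A|/|I'| \ge |E|/(2|Q|)$ produces the desired exponent $\sum_j(|Z_j|-1)$, with the factor $2$ absorbed into the universal constant.

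For \eqref{eq::FM-finer-ineq} on product sets $E = \prod J_j$, I propose proving by induction the slightly stronger inequality
\[ \biggl(\fint_E |f|^p\biggr)^{1/p} \ge \prod_{j=1}^d \Bigl(\frac{C|J_j|}{|I_j|}\Bigr)^{|Z_j|-1} \|f\|_{L^\infty(Q)}, \]
from which \eqref{eq::FM-finer-ineq} follows since $\|f\|_{L^\infty(Q)} \ge (\fint_Q |f|^p)^{1/p}$. The inductive step applies the 1D $L^p$-Nazarov inequality \eqref{eq::nazarov-p} in the $x_d$ direction, raises the resulting pointwise-in-$x'$ estimate to the $p$-th power, and integrates over $\prod_{j<d} J_j$. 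Bounding $\|f(x', \cdot)\|_{L^\infty(I_d)} \ge |f(x', y^*)|$ for any fixed $y^* \in I_d$ and applying the inductive stronger form to $f(\cdot, y^*)$ on $I'$ reduces the inner average to the $(d-1)$-dimensional statement; taking the supremum over $y^* \in I_d$ at the very end turns $\|f(\cdot, y^*)\|_{L^\infty(I')}$ into $\|f\|_{L^\infty(Q)}$, closing the induction.

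The main obstacle I anticipate is the slicing step for the general Borel $E$. Threading a product-type structure through an arbitrary set requires the Fubini good-set threshold to be tuned so that both measure ratios $|A|/|I'|$ and $|E_{x'}|/|I_d|$ are simultaneously comparable to $|E|/|Q|$; only then do the 1D Nazarov factor in the $x_d$ direction and the inductive factor in the $x'$ direction compose into the single exponent $\sum_j(|Z_j|-1)$. By contrast, the $L^p$ case is mostly bookkeeping once one commits to propagating the stronger $\|f\|_{L^\infty(Q)}$-based bound through the induction.
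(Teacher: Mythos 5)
Your proposal is correct. For the product-set inequality \eqref{eq::FM-finer-ineq} your induction is essentially the paper's proof: both iterate the one-dimensional $L^p$ Nazarov bound \eqref{eq::nazarov-p} coordinate by coordinate via Fubini; the only difference is that you carry $\|f\|_{L^\infty}$ of the remaining box through the induction, while the paper carries the $L^p$ average over the remaining box, and the two intermediate statements are interchangeable here. For the general Borel inequality \eqref{eq::FM-ineq} the paper gives no proof at all --- it cites Fontes-Merz and remarks that the argument extends verbatim from $Z_j \subset \mathbb{Z}$ to real frequencies --- whereas you reconstruct that argument: the good set $A = \{x' \in I' : |E_{x'}| \ge |E|/(2|I'|)\}$ indeed satisfies $|A|/|I'| \ge |E|/(2|Q|)$ by Fubini, the one-dimensional Nazarov bound on each slice $E_{x'}$ with $x' \in A$ contributes the factor $(C|E|/|Q|)^{|Z_d|-1}$ against $\|f\|_{L^\infty(A\times I_d)}$, and the inductive hypothesis applied to $f(\cdot,y)$ on the Borel subset $A$ of $I'$ (followed by the supremum over $y \in I_d$, or your near-extremal $y^*$) supplies the remaining exponent; the two factors compose into $\sum_j(|Z_j|-1)$ because both measure ratios are bounded below by $|E|/(2|Q|)$ and the powers are monotone in their base, with the factor $2$ absorbed into $C$. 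This is precisely the Fontes-Merz slicing argument, so your write-up is a valid, self-contained substitute for the citation the paper relies on.
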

\begin{proof}
    The inequality \eqref{eq::FM-ineq} was proven by Fontes-Merz \cite{Fontes-Merz} when $Z_j \subset \mathbb{Z}$.
    The same proof clearly applies to the general situation.
    So we shall omit its proof here.

    To obtain \eqref{eq::FM-finer-ineq}, we will prove by induction that the following estimate holds for all $\nu=1,\ldots,d$ and (if $\nu>1$) for all $x_j \in I_j : j \le \nu-1$,
    \begin{equation}
        \label{eq::FM-ineq-induction}
        \biggl( \fint_{\prod_{j=\nu}^d J_j} |f(x)|^p \dd x_{\nu\le j\le d} \biggr)^{\frac{1}{p}}
        \ge \prod_{\nu\le j \le d} \Bigl(\frac{C|J_j|}{|I_j|}\Bigr)^{|Z_j|-1}  \biggl( \fint_{\prod_{j=\nu}^d I_j} |f(x)|^p \dd x_{\nu\le j\le d} \biggr)^{\frac{1}{p}},
    \end{equation}
    since this implies \eqref{eq::FM-finer-ineq} by taking $\nu=1$.
    Indeed, when $\nu=d$, this follows directly from \eqref{eq::nazarov-infty}.
    Suppose that \eqref{eq::FM-ineq-induction} holds for $\nu>1$, then for all $x_j \in I_j : j \le \nu-1$,
    \begin{align*}
        \biggl( \fint_{\prod_{j=\nu-1}^d J_j} |f(x)|^p &\dd x_{\nu-1\le j\le d} \biggr)^{\frac{1}{p}}
        = \biggl( \fint_{J_\nu} \fint_{\prod_{j=\nu}^d J_j} |f(x)|^p \dd x_{\nu\le j\le d} \dd x_{\nu-1} \biggr)^{\frac{1}{p}} \\
        & \ge \prod_{\nu\le j \le d} \Bigl(\frac{C|J_j|}{|I_j|}\Bigr)^{|Z_j|-1}  \biggl( \fint_{J_\nu} \fint_{\prod_{j=\nu}^d I_j} |f(x)|^p \dd x_{\nu\le j\le d} \dd x_{\nu-1} \biggr)^{\frac{1}{p}} \\
        & \ge \prod_{\nu-1\le j \le d} \Bigl(\frac{C|J_j|}{|I_j|}\Bigr)^{|Z_j|-1}  \biggl( \fint_{\prod_{j=\nu-1}^d I_j} |f(x)|^p \dd x_{\nu-1\le j\le d} \biggr)^{\frac{1}{p}}.
        \qedhere
    \end{align*}
\end{proof}

\section{Relations with quantum limits}
\label{sec::QL-relation}

We prove \eqref{eq::QL-relation}, which follows directly from the following lemma.

\begin{lem}
    If $G \subset \mathbb{T}^d$ is an open subset with $|\partial G| = 0$, then
    \begin{equation*}
        \inf_{\mu \in \mathcal{Q}} \mu(G)
        = \liminf_{\lambda\to\infty} \inf_{v \in \mathcal{E}_d(\lambda)^*} \int_{G} |u|^2 \dd x,
    \end{equation*}
    where $\mathcal{E}_d(\lambda)^*$ is the set of all $u \in \mathcal{E}_d(\lambda)$ with $\|u\|_{L^2(\mathbb{T}^d)} = 1$.
\end{lem}
\begin{proof}
    By Bourgain (c.f.~\cite{Jakobson}), every $\mu \in \mathcal{Q}$ is absolutely continuous with respect to the Lebesgue measure on $\mathbb{T}^d$, and thus $\mu(\partial G) = 0$.
    So, if $|u_n|^2 \dd x \rightharpoonup \mu$ where $u_n \in \mathcal{E}_d(\lambda_n)^*$ with $\lambda_n\to\infty$, then
    \begin{equation}
    \label{eq::QL-key-identity}
        \lim_{n \to \infty} \int_{G} |u_n|^2 \dd x = \mu(G).
    \end{equation}
    Taking first the infimum among all such sequences $u_n$ in \eqref{eq::QL-key-identity} and then $\inf_{\mu \in \mathcal{Q}}$ yields
    \begin{equation*}
        \liminf_{\lambda\to\infty} \inf_{u \in \mathcal{E}_d(\lambda)^*} \int_{G} |u|^2 \dd x
        \le \inf_{\mu \in \mathcal{Q}} \mu(G).
    \end{equation*}

    Next, for all $\epsilon > 0$ and eigenvalue $\lambda$, choose $u_\lambda \in \mathcal{E}_d(\lambda)$ such that
    \begin{equation*}
        \inf_{u \in \mathcal{E}_d(\lambda)^*} \int_{G} |u|^2 \dd x + \epsilon
        \ge \int_G |u_\lambda|^2 \dd x.
    \end{equation*}
    Taking $\liminf_{\lambda\to\infty}$, using \eqref{eq::QL-key-identity} and the fact that any sequence of $u_\lambda \in \mathcal{E}_d(\lambda)^*$ contains a subsequence, say $u_{\lambda_n}$, such that $|u_{\lambda_n}|^2 \dd x$ weakly converges, we obtain the estimate
    \begin{equation*}
        \liminf_{\lambda\to\infty} \inf_{u \in \mathcal{E}_d(\lambda)^*} \int_{G} |u|^2 \dd x + \epsilon 
        \ge \liminf_{\lambda \to \infty} \int_G |u_\lambda|^2 \dd x
        \ge \inf_{\mu \in \mathcal{Q}} \mu(G).
    \end{equation*}
    We conclude by taking $\epsilon \to 0$.
\end{proof}

\section{Cluster structure of integer points on spheres}
\label{sec::cluster}

We review some results on the cluster structure of integer points on spheres.

\subsection{Cluster structure on circles}

\begin{lem}[Jarník \cite{Jarnik1926}]
    \label{lem::jarnik}
    Let $\lambda > 0$.
    On the circle $\lambda\mathbb{S}$, any arc of length $<\sqrt{2}\lambda^{\frac{1}{3}}$ contains at most two integer points, i.e., points in $\mathbb{Z}^2$.
\end{lem}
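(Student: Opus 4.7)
The plan is to argue by contradiction, exploiting two expressions for the area of the triangle spanned by three hypothetical integer points on a short arc and showing that these expressions are incompatible.

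Suppose we have three distinct integer points $P_1,P_2,P_3$ lying on an arc of length $L$ on $\lambda\mathbb{S}$, ordered along the arc, and denote by $\alpha$ the subarc from $P_1$ to $P_2$ and by $\beta$ the subarc from $P_2$ to $P_3$, so $\alpha+\beta\le L$. Write $a=|P_2P_3|$, $b=|P_1P_3|$, $c=|P_1P_2|$ for the three chord lengths, and let $S$ denote the area of the triangle $P_1P_2P_3$. Since a line meets a circle in at most two points, the three points are not collinear, so $S>0$.

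The first expression for $S$ comes from the lattice structure: by the shoelace formula $2S$ is a nonzero integer, whence $S\ge \tfrac{1}{2}$. The second expression is the classical circumradius formula $S=abc/(4R)$ with $R=\lambda$, giving $abc\ge 2\lambda$. Combining and using that chord length is bounded above by arc length, $c\le \alpha$, $a\le \beta$, $b\le \alpha+\beta\le L$, and the AM--GM inequality $\alpha\beta\le \bigl(\tfrac{\alpha+\beta}{2}\bigr)^2\le L^2/4$, we get
\begin{equation*}
    2\lambda \le abc \le \alpha\beta(\alpha+\beta) \le \frac{L^3}{4}.
\end{equation*}
Hence $L\ge (8\lambda)^{1/3}=2\lambda^{1/3}$, which is strictly larger than $\sqrt{2}\lambda^{1/3}$, contradicting the assumption $L<\sqrt{2}\lambda^{1/3}$.

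There is no real obstacle here: the argument is a two-line juxtaposition of the half-integrality of lattice triangle areas and the $abc/(4R)$ formula; the only minor care needed is to use the ordering along the arc so that the longest chord $b$ is bounded by the full arc length while the two shorter chords are bounded by the complementary subarcs, which is exactly what makes AM--GM yield the factor $L^3/4$ rather than merely $L^3$. In fact, as the computation shows, the argument delivers the slightly stronger bound $2\lambda^{1/3}$, so stating Jarník's lemma with the constant $\sqrt{2}$ leaves comfortable slack.
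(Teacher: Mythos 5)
Your argument is correct: non-collinearity forces $2S\in\mathbb{Z}_{\ge1}$, the circumradius identity $S=abc/(4\lambda)$ gives $abc\ge2\lambda$, and the chord-versus-arc bounds with AM--GM give $abc\le L^3/4$, so $L\ge 2\lambda^{1/3}$, which indeed implies the stated bound with room to spare. The paper does not prove this lemma but simply cites Jarník, and your proof is the classical one, so there is nothing to compare beyond noting that it even delivers the sharper constant $2$ in place of $\sqrt{2}$.
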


\begin{lem}[Cilleruelo--Cordoba \cite{CillerueloCordoba}]
    \label{lem::CC}
    Let $\lambda>0$, $m\in \mathbb{N}\backslash \{0\}$ and $\delta(m) = \bigl(4\bigl\lfloor\frac{m}{2}\bigr\rfloor+2\bigr)^{-1}$.
    On the circle $\lambda\mathbb{S}$, any arc of length $< \sqrt{2} \lambda^{\frac{1}{2}-\delta(m)}$ contains at most $m$ integer points.
\end{lem}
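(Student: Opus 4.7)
The plan is to derive a contradiction by assuming the arc contains $m+1$ lattice points and constructing a nonzero Gaussian-integer-valued determinant from them, then estimating it in two conflicting ways. Set $\ell = \lfloor m/2 \rfloor$, so that $m+1 \ge 2\ell+1$. Select $2\ell+1$ of the alleged lattice points and identify them with distinct Gaussian integers $z_0, \ldots, z_{2\ell} \in \mathbb{Z}[i]$ satisfying $|z_j|^2 = \lambda^2$.

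The key object is the Laurent--Vandermonde matrix $M = (z_j^{k-\ell})_{0 \le j,k \le 2\ell}$, arising from evaluating the basis $\{z^{-\ell},\ldots,z^{-1},1,z,\ldots,z^\ell\}$ of restrictions to $\lambda\mathbb{S}$ of polynomials of degree $\le \ell$ in $(x,y)$ (this space has dimension $2\ell+1$). Factoring $z_j^{-\ell}$ from each row relates $\det M$ to a standard Vandermonde:
\[
    \det M = V \cdot \prod_{j=0}^{2\ell} z_j^{-\ell}, \qquad V = \prod_{0\le i<j\le 2\ell}(z_j - z_i),
\]
and hence $|\det M| = |V|\,\lambda^{-\ell(2\ell+1)}$.

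Two competing estimates then conclude the argument. For the upper bound, each $|z_j - z_i|$ is a chord on an arc of length $L$, so $|V| \le L^{\ell(2\ell+1)}$. For the lower bound, the circle constraint $\bar z_j = \lambda^2/z_j$ gives $z_j^{k-\ell} = \bar z_j^{\ell-k}/\lambda^{2(\ell-k)}$ for $k<\ell$, so multiplying column $k$ (for $k<\ell$) by $\lambda^{2(\ell-k)}$ clears denominators and yields a matrix with entries in $\mathbb{Z}[i]$. Its determinant equals $\lambda^{\ell(\ell+1)}\det M$ and is a nonzero Gaussian integer (nonzero because the $z_j$ are distinct, so $V\ne 0$); hence its modulus is at least $1$, which gives $|V| \ge \lambda^{\ell^2}$. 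Combining the bounds yields
\[
    L^{\ell(2\ell+1)} \ge \lambda^{\ell^2}, \qquad \text{i.e.,} \qquad L \ge \lambda^{\ell/(2\ell+1)} = \lambda^{\frac12 - \frac{1}{4\ell+2}} = \lambda^{\frac12 - \delta(m)},
\]
contradicting the assumed inequality $L < \sqrt{2}\lambda^{1/2-\delta(m)}$ up to the constant.

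The main obstacle is sharpening the implicit constant from $1$ to $\sqrt{2}$. In the case $\ell = 1$ this is possible: the cleared $3\times 3$ determinant equals $-4iT$ where $T$ is the area of the lattice triangle formed by the three points, so $T\ge 1/2$ upgrades the lower bound to $|V| \ge 2\lambda$; combined with the AM--GM refinement $|z_1-z_0|\cdot|z_2-z_1| \le L^2/4$ (consecutive chords have chord-sum bounded by the arc), this gives $L \ge 2\lambda^{1/3} > \sqrt{2}\lambda^{1/3}$, recovering Jarn\'ik. Extending this refinement to general $\ell$ -- replacing $L^{\ell(2\ell+1)}$ by a sharper Vandermonde bound involving $\prod_{i<j}(j-i)$ and extracting an analogous structural factor from the cleared determinant (which lies in $\mathbb{Z}$ or $i\mathbb{Z}$ depending on the parity of $\ell$, by a conjugation symmetry) -- is where the remaining constant-chasing concentrates.
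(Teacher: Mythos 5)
Your argument is the classical Cilleruelo--Cordoba determinant argument --- the paper offers no proof of this lemma and simply cites \cite{CillerueloCordoba}, whose proof is exactly this Laurent--Vandermonde computation --- and its core is correct. The identification of $\{z^{-\ell},\dots,z^{\ell}\}$ with a basis for restrictions of degree-$\le\ell$ polynomials to the circle, the factorization $|\det M| = |V|\,\lambda^{-\ell(2\ell+1)}$, the clearing of denominators via $\bar z_j = \lambda^2/z_j$ to produce a nonzero Gaussian integer of modulus $\ge 1$, and the resulting chain $L^{\ell(2\ell+1)} \ge |V| \ge \lambda^{\ell^2}$ are all sound, and together they prove the statement with the constant $1$ in place of $\sqrt{2}$. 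Two small caveats: for $m=1$ the scheme is vacuous ($\ell=0$ gives $1\ge 1$), and there one argues directly that two distinct lattice points of equal norm satisfy $|P-Q|^2 = 2(\lambda^2 - P\cdot Q)\ge 2$; and the whole setup tacitly uses $\lambda^2\in\mathbb{Z}$, which is automatic once the circle carries a lattice point. For the only use the paper makes of the lemma (Theorem~\ref{thm::2d-lowerbound-CC}) only the exponent $\tfrac12-\delta(m)$ matters, so the weaker constant would be harmless there.

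The genuine gap is therefore only the factor $\sqrt{2}$, and the route you propose for it (a sharper Vandermonde upper bound via consecutive arc gaps, plus parity of the cleared determinant) is not the natural lever. The $\sqrt{2}$ comes from the lower-bound side, from the identity just quoted: each of the $\ell(2\ell+1)$ factors of $|V|^2 = \prod_{i<j}|z_i-z_j|^2$ is an \emph{even} integer. Since $|N|^2 = |V|^2/\lambda^{2\ell^2}$ is a positive integer (with $N$ your cleared Gaussian-integer determinant), when $\lambda^2$ is odd one concludes $2^{\ell(2\ell+1)}$ divides $|N|^2$, hence $|V| \ge (\sqrt2)^{\ell(2\ell+1)}\lambda^{\ell^2}$ and $L \ge \sqrt2\,\lambda^{1/2-\delta(m)}$ exactly as claimed; when $\lambda^2$ is even, every $z_j$ is divisible by $1+i$ in $\mathbb{Z}[i]$ and one descends to the circle of radius $\lambda/\sqrt2$ (which only strengthens the inequality) until the radius squared is odd. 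Your $\ell=1$ computation via the triangle area is a correct special case of this, but for general $\ell$ the ``constant chasing'' should go through this parity observation rather than through a refinement of $\prod_{i<j}(j-i)$.
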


\subsection{Cluster structure on high dimensional spheres and ellipsoids}

\begin{df}
    If $E$ is an ellipsoid, then we call the radius of $E$ the length of its longest principal axes and call the ratio of $E$ the maximum quotient between its principal axes.
\end{df}

We refer to the Connes~\cite[Lem.~1]{Connes} for the following lemmas.

\begin{lem}
\label{lem::jarnik-connes}
    For all $d \ge 2$ and $\nu \ge 1$, there exists $c_{d,\nu}>0$ such that, for any ellipsoid $E$ with radius $\lambda$ and ratio $\le \nu$, and for all $k_0 \in E$, we have:
    \begin{itemize}
        \item The following set lies on an affine hyperplane:
        \begin{equation*}
            H(k_0) \coloneq \{k \in E \cap \mathbb{Z}^d : |k-k_0| < c_{d,\nu} \lambda^{\frac{1}{d+1}}\}.
        \end{equation*}
        \item If $H(k_0)$ determines a hyperplane, then
        \begin{equation*}
            \dist\bigl( (E \cap \mathbb{Z}^d) \backslash H(k_0),H(k_0) \bigr) \ge c_{d,\nu} \lambda^{\frac{1}{d+1}}.
        \end{equation*}
    \end{itemize}
\end{lem}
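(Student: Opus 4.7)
The plan is to derive both assertions from a single volume--integrality estimate: any $d+1$ affinely independent integer points $q_0, \ldots, q_d$ lying on $E$ have diameter $L := \max_{i,j}|q_i - q_j| \gtrsim_{d,\nu} \lambda^{1/(d+1)}$. To see this, choose an orthonormal frame at $q_0$ whose last vector is the outward unit normal to $E$. The ratio bound $\le \nu$ controls the principal curvatures of $E$ at $q_0$ by $\sim_\nu 1/\lambda$, so expanding the defining quadratic to second order each $q_j - q_0$ has tangential coordinates of magnitude $\le L$ and normal coordinate of magnitude $\le C_\nu L^2/\lambda$. Expanding $|\det(q_1 - q_0, \ldots, q_d - q_0)|$ along the normal row yields the upper bound $C_{d,\nu} L^{d+1}/\lambda$, while the same determinant, transported back to the standard basis by an orthogonal rotation (which preserves absolute values), is a nonzero integer and so is at least $1$. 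Combining the two bounds gives the claim.

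The first bullet is immediate: any $d+1$ points of $H(k_0)$ have pairwise distances $\le 2 c_{d,\nu} \lambda^{1/(d+1)}$, so if $c_{d,\nu}$ is chosen smaller than half the critical constant above, these points must be affinely dependent, forcing $H(k_0)$ into an affine hyperplane $\Pi$. For the second bullet, pick affinely independent $p_0 = k_0, p_1, \ldots, p_{d-1} \in H(k_0)$ spanning $\Pi$ and suppose for contradiction that there exist $k \in (E \cap \mathbb{Z}^d) \setminus H(k_0)$ and $k' \in H(k_0)$ with $|k - k'| < c_{d,\nu} \lambda^{1/(d+1)}$. The triangle inequality through $k'$ and $k_0$ bounds every pairwise distance in $\{p_0, \ldots, p_{d-1}, k\}$ by $3 c_{d,\nu} \lambda^{1/(d+1)}$. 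If $k \notin \Pi$, these are $d+1$ affinely independent integer points on $E$, and the key estimate is violated once $3 c_{d,\nu}$ is also below the critical constant. Hence $k \in \Pi \cap E$.

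To rule out this remaining possibility, I would proceed by induction on $d$. The key estimate applied inside $\Pi$ to the spanning set $p_0, \ldots, p_{d-1}$ shows that the radius $\lambda'$ of the $(d-2)$-dimensional ellipsoid $\Pi \cap E$ satisfies $\lambda' \lesssim_{d,\nu} \lambda^{d/(d+1)}$, so the dimension-$(d-1)$ threshold $(\lambda')^{1/d}$ is comparable to the dimension-$d$ threshold $\lambda^{1/(d+1)}$; this scaling match is what makes the induction natural. Applying the inductive version of the lemma to $\Pi \cap E$ around $k_0$ then forbids $k$ from lying within $c_{d,\nu} \lambda^{1/(d+1)}$ of $H(k_0)$ without itself belonging to $H(k_0)$, closing the argument. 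The base case $d=2$ is exactly Jarník's lemma (Lemma~\ref{lem::jarnik}). The main obstacle is the bookkeeping of the induction: one must track how the ratio of $\Pi \cap E$ depends on $\nu$ and on the primitive integer normal defining $\Pi$, verify that the inductive distance bound inside $\Pi \cap E$ transports to a bound in $\mathbb{R}^d$, and arrange a decreasing sequence of constants $c_{d,\nu}$ so that both bullets hold simultaneously at every dimension.
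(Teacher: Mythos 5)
The paper gives no proof of this lemma --- it is quoted from Connes \cite{Connes}, so your argument has to stand entirely on its own. Its core is sound: after reducing to the case $L \ll_\nu \lambda$ (otherwise the conclusion is trivial), the graph representation of $E$ over the tangent plane at $q_0$ with Hessian $O(\nu^2/\lambda)$ is valid, the cofactor expansion along the normal row gives $1 \le |\det(q_1-q_0,\ldots,q_d-q_0)| \le C_{d,\nu}L^{d+1}/\lambda$, and this key estimate does yield the first bullet and the off-hyperplane half of the second bullet exactly as you describe.

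The gap is the case $k\in\Pi$, and it is not mere bookkeeping. Your induction rests on the claim that the dimension-$(d-1)$ threshold $(\lambda')^{1/d}$ for the section $\Pi\cap E$ is \emph{comparable} to $\lambda^{1/(d+1)}$. But applying the determinant estimate to the spanning set $p_0,\ldots,p_{d-1}$ only gives the one-sided bound $(V\lambda')^{1/d}\lesssim_{d,\nu} \rho$, where $\rho = c_{d,\nu}\lambda^{1/(d+1)}$ and $V\ge 1$ is the covolume of $\Pi\cap\mathbb{Z}^d$; in the problematic regime $\lambda'$ can lie anywhere in $[\rho/2,\,C\rho^d]$, and when $\lambda'\ll\rho^d$ the inductive separation scale $(\lambda')^{1/d}$ is far smaller than $\rho$. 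The inductive hypothesis then only separates the (possibly much smaller) cluster $\{j: |j-k_0|<c_{d-1}(\lambda')^{1/d}\}$ from the rest of $\Pi\cap E\cap\mathbb{Z}^d$; it does not prevent a lattice point $k$ of $\Pi\cap E$ with $|k-k_0|$ slightly larger than $\rho$ from lying within $o(\rho)$ of a point of $H(k_0)$ near the boundary of $B(k_0,\rho)$ --- a priori, many inductive clusters of $\Pi\cap E$ could populate $B(k_0,2\rho)$ at mutual distances $\sim(\lambda')^{1/d}\ll\rho$. So the ``scaling match'' that is supposed to close the induction is not established; this in-plane case is precisely where the substance of Connes's argument lies (and why the companion statement, Lemma~\ref{lem::connes}, only achieves the much weaker separation exponent $2/(d+1)!$). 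Two further points you flag but do not resolve would also need real work: the inductive statement must be proved for a general $(d-1)$-dimensional sublattice $\Pi\cap\mathbb{Z}^d$ (arbitrary covolume and skewness), with constants uniform in the lattice, and the ratio of $\Pi\cap E$ must be shown to stay bounded in terms of $\nu$ alone.
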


\begin{lem}
    \label{lem::connes}
    For all $d \ge 2$ and $\nu \ge 1$, there exists a function $\tau_{d,\nu}$ satisfying
    \begin{equation*}
        \tau_{d,\nu}(\lambda) \asymp \lambda^{\frac{2}{(d+1)!}}
    \end{equation*}
    as $\lambda\to\infty$, such that, if $E\subset\mathbb{R}^d$ is an ellipsoid with radius $\lambda$ and ratio $\le \nu$, and if $\Omega = (\Omega_\alpha)_\alpha$ the partition of $E\cap\mathbb{Z}^d$ given by the graph $G(E\cap\mathbb{Z}^d,\tau_{d,\nu}(\lambda))$ as according to Definition~\ref{def::graph}, then $\Omega_\alpha$ lies on an affine hyperplane for all $\alpha$.
\end{lem}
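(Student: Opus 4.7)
The proof should go by induction on the dimension $d$, with Lemma~\ref{lem::jarnik-connes} providing the crucial input at each step. For the base case $d=2$, Lemma~\ref{lem::jarnik-connes} (equivalently, Jarn\'ik's Lemma~\ref{lem::jarnik}) already asserts that at most two integer points of $E$ lie within a ball of radius $c_{2,\nu}\lambda^{1/3}$ around any integer point. Choosing $\tau_{2,\nu}(\lambda)$ slightly smaller than $c_{2,\nu}\lambda^{1/3}$ and running a short chain argument, each connected component of $G(E\cap\mathbb{Z}^2,\tau_{2,\nu}(\lambda))$ consists of at most two points, which trivially lie on a line. This matches the target exponent $2/3!=1/3$.

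For the inductive step $d\ge 3$, let $C$ be a connected component of $G(E\cap\mathbb{Z}^d,\tau_{d,\nu}(\lambda))$ and fix any $k_0\in C$. The first assertion of Lemma~\ref{lem::jarnik-connes} places $H(k_0)$ on some affine hyperplane $\pi$. When $H(k_0)$ affinely spans $\pi$, the second assertion supplies a distance gap of $c_{d,\nu}\lambda^{1/(d+1)}$ between $\pi$ and the rest of $E\cap\mathbb{Z}^d$; provided that $\tau_{d,\nu}(\lambda)<c_{d,\nu}\lambda^{1/(d+1)}$, a direct chain argument starting at $k_0$ forces $C\subset\pi$: iteratively, for any graph-neighbor $k_1$ of a point already known to lie in $\pi$, one has $|k_1-\pi|<\tau_{d,\nu}(\lambda)$, which by the gap is incompatible with $k_1\notin\pi$; hence $H(k_1)\subset\pi$ and the argument extends. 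In the degenerate case where $H(k)$ fails to span a hyperplane for every $k\in C$, each such $H(k)$ has affine dimension at most $d-2$; an auxiliary argument, relying on Lemma~\ref{lem::jarnik-connes} in the lower-dimensional affine span and tracking how "flat" local structures propagate along chains, should still confine $C$ to some hyperplane.

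Once $C$ is confined to a hyperplane $\pi$, the intersection $E\cap\pi$ is a $(d-1)$-dimensional ellipsoid of radius $\le\lambda$ and ratio controlled in terms of $\nu$. After an affine change of coordinates identifying $\pi\cap\mathbb{Z}^d$ with $\mathbb{Z}^{d-1}$ (with a scaling factor determined by the integer normal to $\pi$), the induction hypothesis applies in dimension $d-1$ and yields the cluster structure within the section. Setting $\tau_{d,\nu}(\lambda)=\min\bigl\{\tfrac12 c_{d,\nu}\lambda^{1/(d+1)},\ (\text{rescaled})\ \tau_{d-1,\nu'}(\lambda)\bigr\}$ and telescoping the recursion $a_d=a_{d-1}/(d+1)$ with base value $a_2=1/3$ produces $\tau_{d,\nu}(\lambda)\asymp\lambda^{2/(d+1)!}$, matching the claimed asymptotic.

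The main obstacles are twofold. First, the degenerate case where no local set $H(k_0)$ spans a hyperplane requires care: one must show that the low-dimensional local structure propagates consistently along all chains in $C$, which seems to need a sub-induction on the dimension of the local affine span. Second, the lattice rescaling when passing from $\mathbb{Z}^d$ restricted to $\pi$ to $\mathbb{Z}^{d-1}$ must be tracked quantitatively; it is precisely this rescaling (whose magnitude is controlled by the $\lambda^{(d-1)/(d+1)}$ scale of the integer normal to $\pi$) that produces the factor $1/(d+1)$ in the recursion, and hence the final exponent $2/(d+1)!$.
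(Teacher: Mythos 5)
Note first that the paper does not prove this statement: both Lemma~\ref{lem::jarnik-connes} and Lemma~\ref{lem::connes} are quoted from Connes, so your argument has to stand on its own. Your base case and your treatment of the non-degenerate case are correct: if some $k_0$ in a component $C$ has $H(k_0)$ spanning a hyperplane $\pi$, the separation in the second bullet of Lemma~\ref{lem::jarnik-connes} together with the chain argument gives $C\subseteq H(k_0)\subseteq\pi$ as soon as $\tau_{d,\nu}(\lambda)<c_{d,\nu}\lambda^{\frac{1}{d+1}}$. But at that point the conclusion of the lemma for $C$ is already established, so your subsequent application of the induction hypothesis to $E\cap\pi$ proves nothing further (the lemma asks only that each component lie on \emph{some} hyperplane, not for a finer cluster structure inside $\pi$). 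The only place where an induction is actually needed is the degenerate case, which you leave as ``an auxiliary argument \dots should still confine $C$''; this is the crux, not a routine verification. For $d=2,3$ it is easy (a degenerate local patch spans a line, which meets $E$ in at most two points, so every vertex of a fully degenerate component has degree at most one and the component has at most two elements), but for $d\ge 4$ it is a genuine gap: if every $H(k)$, $k\in C$, spans only a $2$-plane (say $d=4$), two adjacent local patches need only share the line through the two neighboring points, so three chained patches can already affinely span all of $\mathbb{R}^4$, and Lemma~\ref{lem::jarnik-connes} offers no separation statement in this situation. Controlling this propagation is precisely where the factorial loss in the exponent $2/(d+1)!$ originates.

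The quantitative bookkeeping also does not close. Your recursion $\tau_{d,\nu}=\min\bigl\{\tfrac12 c_{d,\nu}\lambda^{1/(d+1)},\ (\text{rescaled})\,\tau_{d-1,\nu'}(\lambda)\bigr\}$ evaluates $\tau_{d-1}$ at the original radius $\lambda$ (the section $E\cap\pi$ can have radius comparable to $\lambda$), which yields an exponent $\min\{\frac{1}{d+1},\frac{2}{d!}\}$ --- equal to $\frac14$ for $d=3$ and $\frac1{12}$ for $d=4$ --- rather than $\frac{2}{(d+1)!}=\frac1{12}$ and $\frac1{60}$; the recursion $a_d=a_{d-1}/(d+1)$ would correspond to applying the $(d-1)$-dimensional statement to an ellipsoid of radius $\lambda^{1/(d+1)}$, which is not what your reduction produces. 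Moreover the reduction is not licensed by the induction hypothesis as stated: identifying $\pi\cap\mathbb{Z}^d$ with $\mathbb{Z}^{d-1}$ distorts the metric by an amount governed by the primitive integer normal to $\pi$, whose length can be a positive power of $\lambda$, so the image of $E\cap\pi$ is an ellipsoid whose ratio is \emph{not} bounded by a constant $\nu'$ depending only on $d$ and $\nu$; you would need a version of the lemma allowing the ratio to grow with $\lambda$, or a different reduction (for instance a direct determinant/volume estimate on $d+1$ points of a chain). As written, the proposal proves the lemma only for $d=2,3$.
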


\bibliographystyle{plain}
\bibliography{references}

\end{document}